\newcommand{\impli}{\Rightarrow}
\newcommand{\N}{\mathbb{N}}
\newcommand{\sub}{\subseteq}
\def\epsilon{\varepsilon}
\newtheorem{theorem}{Theorem}[section]
\newtheorem{proposition}[theorem]{Proposition}
\newtheorem{corollary}[theorem]{Corollary}
\newtheorem{lemma}[theorem]{Lemma}
\newtheorem{definition}[theorem]{Definition}
\newtheorem{example}[theorem]{Example}
\newtheorem{remark}[theorem]{Remark}
\numberwithin{equation}{section}
\author{S. Okada}
\address{112 Marcorni Crescent, Kambah, ACT 2902, Australia} \email{susbobby@grapevine.com.au}
\author{J. Rodr\'{i}guez}
\address{Dpto. de Ingenier\'{i}a y Tecnolog\'{i}a de Computadores\\Facultad de Inform\'{a}tica\\
Universidad de Murcia\\ 30100 Espinardo (Murcia)\\ Spain} \email{joserr@um.es}
\author{E.A. S\'{a}nchez-P\'{e}rez}
\address{Instituto Universitario de Matem\'{a}tica Pura y Aplicada\\ Universitat Polit\`{e}cnica de Val\`{e}ncia\\
Camino de Vera s/n\\ 46022 Valencia\\ Spain} \email{easancpe@mat.upv.es}
\subjclass[2020]{46E30, 46G10}
\keywords{Vector measure; space of integrable functions; Banach lattice; positively norming set}
\thanks{The research of J. Rodr\'{i}guez was partially supported by grants PID2021-122126NB-C32 
(funded by MCIN/AEI/10.13039/501100011033 and ``ERDF A way of making Europe'') and 
21955/PI/22 (funded by {\em Fundaci\'on S\'eneca - ACyT Regi\'{o}n de Murcia}). The research of E.A. S\'{a}nchez-P\'{e}rez was partially supported 
by grant PID2020-112759GB-I00 funded by MCIN/AEI/10.13039/501100011033.}
\title{On vector measures with values in $\ell_\infty$}
\begin{document}

\begin{abstract}
We study some aspects of countably additive vector measures with values in~$\ell_\infty$ and the Banach lattices of 
real-valued functions that are integrable with respect to such a vector measure. On the one hand, we prove that if $W \sub \ell_\infty^*$
is a total set not containing sets equivalent to the canonical basis of~$\ell_1(\mathfrak{c})$, then there is a 
non-countably additive $\ell_\infty$-valued map $\nu$ defined on a $\sigma$-algebra such that 
the composition $x^* \circ \nu$ is countably additive for every $x^*\in W$.
On the other hand, we show that a Banach lattice $E$ is separable whenever it admits a countable positively norming set
and both $E$ and~$E^*$ are order continuous. As a consequence, if $\nu$ is a countably additive vector measure 
defined on a $\sigma$-algebra and taking values in a separable Banach space, then the space $L_1(\nu)$
is separable whenever $L_1(\nu)^*$ is order continuous.
\end{abstract}

\maketitle

\section{Introduction}

Given a $\sigma$-algebra~$\Sigma$ and a Banach space~$X$, we denote by ${\rm ca}(\Sigma,X)$ the set of all countably additive $X$-valued measures
defined on~$\Sigma$; when $X$ is the real field, this set is simply denoted by ${\rm ca}(\Sigma)$. 
The topological dual of~$X$ is denoted by~$X^*$.
The Orlicz-Pettis theorem states that a map $\nu: \Sigma \to X$ belongs to ${\rm ca}(\Sigma,X)$ if (and only if)
the composition $x^*\circ \nu$ belongs to ${\rm ca}(\Sigma)$ for every $x^*\in X^*$ (see, e.g., \cite[p.~22, Corollary~4]{die-uhl-J}). 
It is natural to wonder whether testing on a ``big'' subset, instead of all~$X^*$, is enough for countable additivity.
For instance, one might consider a {\em total} subset of~$X^*$, that is, a set $W \sub X^*$ satisfying $\bigcap_{x^*\in W}\ker x^*=\{0\}$. In general, this does not work. 
A typical example is given by the map $\nu: \mathcal{P}(\N)\to \ell_\infty$ defined on the power set of~$\N$ 
by $\nu(A):=\chi_A$ (the characteristic function of~$A$) for all $A\sub \N$. Indeed, 
$\nu$ is not countably additive, while the composition $\pi_n\circ \nu:  \mathcal{P}(\N) \to \mathbb R$ is countably additive for every $n\in \N$, where $\pi_n\in \ell_\infty^*$
is the $n$th-coordinate functional given by $\pi_n(x):=x(n)$ for all $x\in \ell_\infty$. 

Thanks to the injectivity of~$\ell_\infty$, such an example can be easily carried over to any Banach space
containing subspaces isomorphic to~$\ell_\infty$. Actually, the existence of such subspaces is the only obstacle, as  
the following result of Diestel and Faires~\cite{die-fai} (cf. \cite[p.~23, Corollary~7]{die-uhl-J}) shows:

\begin{theorem}[Diestel-Faires]\label{theorem:DF}
Let $X$ be a Banach space. 
\begin{enumerate}
\item[(i)] Suppose $X$ contains a subspace isomorphic to~$\ell_\infty$. Then there exist a total set $W \sub X^*$ and a map $\nu:\mathcal{P}(\N)\to X$
such that $x^*\circ \nu\in {\rm ca}(\mathcal{P}(\N))$ for every $x^*\in W$ and $\nu\not\in {\rm ca}(\mathcal{P}(\N),X)$.
\item[(ii)] Suppose $X$ does not contain subspaces isomorphic to~$\ell_\infty$. Let $W \sub X^*$ be a total set. If $\nu:\Sigma\to X$
is a map defined on a $\sigma$-algebra~$\Sigma$ such that $x^*\circ \nu\in {\rm ca}(\Sigma)$ for every $x^*\in W$, then 
$\nu\in {\rm ca}(\Sigma,X)$.
\end{enumerate}
\end{theorem}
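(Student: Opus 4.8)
\smallskip

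\noindent\emph{Part (i).} The idea is to push the model example $A\mapsto\chi_A$ into~$X$ using the injectivity of~$\ell_\infty$. Let $Y\sub X$ be a subspace isomorphic to~$\ell_\infty$, say via an isomorphism $\iota:\ell_\infty\to Y$. Since $\ell_\infty$ is an injective Banach space, $\iota^{-1}:Y\to\ell_\infty$ extends to a bounded operator $\rho:X\to\ell_\infty$ with $\rho\circ\iota=\mathrm{id}_{\ell_\infty}$. I would set $\nu:=\iota\circ\nu_0$, where $\nu_0:\mathcal{P}(\N)\to\ell_\infty$, $\nu_0(A):=\chi_A$, is the map considered in the introduction, and
\[
W:=\{\pi_n\circ\rho:n\in\N\}\cup\{x^*\in X^*:x^*|_Y=0\}.
\]
All the required properties are then essentially immediate: $W$ is total (if $z\in X$ is annihilated by every $x^*\in X^*$ vanishing on~$Y$, then $z\in Y$ by Hahn--Banach, and then $\pi_n(\rho(z))=0$ for all~$n$ forces $z=0$, as $\rho|_Y=\iota^{-1}$); for $x^*\in W$ the composition $x^*\circ\nu$ is either $\pi_n\circ\nu_0$ (a Dirac measure at~$n$) or the zero measure, hence countably additive; and $\nu\notin{\rm ca}(\mathcal{P}(\N),X)$, for a countably additive $\nu$ would make $\rho\circ\nu=\nu_0$ countably additive, which it is not. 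The only genuine choice here is to adjoin the annihilator of~$Y$ so as to make $W$ total without disturbing the compositions.

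\smallskip

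\noindent\emph{Part (ii).} I would argue by contraposition: assuming $x^*\circ\nu\in{\rm ca}(\Sigma)$ for every $x^*\in W$ but $\nu\notin{\rm ca}(\Sigma,X)$, I aim to embed~$\ell_\infty$ in~$X$. First, totality of~$W$ forces $\nu$ to be finitely additive (apply the functionals of~$W$ to $\nu(A\cup B)-\nu(A)-\nu(B)$ for disjoint $A,B$), and one checks that $\nu$ is bounded (each $x^*\circ\nu$ is countably additive on a $\sigma$-algebra, hence of finite variation). The crucial step is then a dichotomy. If $\nu$ is \emph{not} strongly additive, choose pairwise disjoint $(E_n)$ in~$\Sigma$ with $\|\nu(E_n)\|\ge 2\eta>0$ and functionals $x_n^*$ in the unit ball of~$X^*$ with $x_n^*(\nu(E_n))\ge\eta$. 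The scalar measures $x_n^*\circ\nu$ are uniformly bounded (by the finite semivariation of~$\nu$), so Rosenthal's lemma yields an infinite $M\sub\N$ with $|x_n^*\circ\nu|\bigl(\bigcup_{j\in M\setminus\{n\}}E_j\bigr)<\eta/4$ for $n\in M$. I would then verify that the operator
\[
T:\ell_\infty(M)\to X,\qquad T(a):=\int\Bigl(\textstyle\sum_{j\in M}a_j\chi_{E_j}\Bigr)\,d\nu
\]
(the elementary integral of a bounded measurable function against the bounded finitely additive measure~$\nu$) is an isomorphic embedding: $\|T(a)\|$ is at most the semivariation of~$\nu$ times~$\|a\|_\infty$, while evaluating $T(a)$ at~$x_{j_0}^*$ for an index $j_0\in M$ with $|a_{j_0}|\ge\tfrac{3}{4}\|a\|_\infty$ gives $\|T(a)\|\ge(\eta/2)\|a\|_\infty$. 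Since $\ell_\infty(M)\cong\ell_\infty$, this contradicts the hypothesis on~$X$, so $\nu$ must be strongly additive. \textbf{I expect this Rosenthal-lemma step to be the main obstacle}: it is where one has to manufacture an honest $\ell_\infty$-embedding out of the merely qualitative failure of strong additivity, and the disjointification furnished by Rosenthal's lemma is precisely what makes the natural candidate operator bounded below.

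\smallskip

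To finish, I would deduce countable additivity from strong additivity together with the hypothesis on~$W$. For pairwise disjoint $(A_n)$ in~$\Sigma$, strong additivity makes $\sum_n\nu(A_n)$ norm-convergent (a blocking argument: otherwise the partial sums, which by finite additivity are $\nu$ of finite unions, fail to be Cauchy, producing disjoint blocks with $\nu$-values bounded away from~$0$). With $B_N:=\bigcup_{n>N}A_n$, finite additivity gives $\nu(B_N)=\nu\bigl(\bigcup_n A_n\bigr)-\sum_{n\le N}\nu(A_n)$, which converges in norm to some $R\in X$; since $B_N\downarrow\emptyset$ and each $x^*\circ\nu$ ($x^*\in W$) is countably additive, $x^*(R)=\lim_N x^*(\nu(B_N))=0$ for all $x^*\in W$, whence $R=0$ by totality. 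Therefore $\nu\bigl(\bigcup_n A_n\bigr)=\sum_n\nu(A_n)$, that is, $\nu\in{\rm ca}(\Sigma,X)$. (Alternatively, strong additivity makes the range of~$\nu$ relatively weakly compact, which allows one to promote countable additivity of $x^*\circ\nu$ from~$W$ to all of~$X^*$ and then invoke the Orlicz--Pettis theorem.)
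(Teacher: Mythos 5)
The paper does not prove this theorem: it is quoted from Diestel--Faires (via Diestel and Uhl's book), so there is no in-paper argument to compare against; your write-up is essentially the classical proof from that source. Part (i) is correct and complete: extending $\iota^{-1}$ to $\rho:X\to\ell_\infty$ by injectivity of $\ell_\infty$, transporting $A\mapsto\chi_A$ into $Y$, and adjoining the annihilator of $Y$ to the functionals $\pi_n\circ\rho$ to restore totality is exactly the standard construction, and every verification you indicate goes through. In part (ii) the dichotomy (failure of strong additivity plus Rosenthal's lemma manufactures an isomorphic embedding of $\ell_\infty(M)$ via the elementary integral $T$; strong additivity plus totality of $W$ yields countable additivity through the tail sets $B_N$) is also the standard route, and your estimates for $T$, with the constants $2\eta$, $\eta$, $\eta/4$, are right.

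There is, however, one genuine gap in part (ii): the boundedness of $\nu$. Knowing that $x^*\circ\nu$ is countably additive, hence of finite variation, for each $x^*$ in the \emph{total} set $W$ only gives $\sup_{A\in\Sigma}|x^*(\nu(A))|<\infty$ separately for each such $x^*$. Since a total set need not be norming, and ${\rm span}(W)$ need not be norm-dense in $X^*$, no uniform-boundedness argument converts this scalar information into $\sup_{A\in\Sigma}\|\nu(A)\|<\infty$. The implication you need is precisely the Dieudonn\'e--Grothendieck theorem that the paper itself invokes in its Introduction; its proof rests on the Nikod\'ym boundedness theorem, i.e., on a sliding-hump argument over the $\sigma$-algebra, and is a real theorem rather than the one-line check your parenthesis suggests. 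This matters because your Rosenthal step uses the finite semivariation of $\nu$ twice: to bound the variations $|x_n^*\circ\nu|(\Omega)$ uniformly before applying Rosenthal's lemma, and to define and bound the elementary integral behind $T$. You should either cite Dieudonn\'e--Grothendieck explicitly at that point or supply its proof. Everything else, including the final passage from strong additivity to countable additivity using the totality of $W$, is correct.
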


The following concept (going back to \cite[Appendice~II]{tho}) arises naturally:

\begin{definition}\label{definition:OT}
Let $X$ be a Banach space. A set $W\sub X^*$ is said to have the {\em Orlicz-Thomas (OT) property} if, for every 
map $\nu:\Sigma \to X$ defined on a $\sigma$-algebra~$\Sigma$, we have
$\nu \in {\rm ca}(\Sigma,X)$ whenever $x^*\circ \nu \in {\rm ca}(\Sigma)$ for all $x^*\in W$.
\end{definition}

Any set having the OT property is total (Proposition~\ref{proposition:OTimpliestotal}). 
With this language, the Diestel-Faires Theorem~\ref{theorem:DF} says that every total subset of the dual of a Banach space~$X$
has the OT property if and only if $X$ contains no subspace isomorphic to~$\ell_\infty$. Without additional assumptions on~$X$,
it is easy to check that any norm-dense subset of~$X^*$ has the OT property as explicitly given in \cite[Lemma~3.1]{nyg-rod}; 
alternatively, we can apply the Vitali-Hahn-Saks-Nikod\'{y}m theorem (see, e.g., \cite[p.~24, Corollary~10]{die-uhl-J}) together with the Orlicz-Pettis theorem.
Given any total set $W \sub X^*$, a classical result
of Dieudonn\'{e} and Grothendieck (see, e.g., \cite[p.~16, Corollary~3]{die-uhl-J}) states that an $X$-valued map defined on a~$\sigma$-algebra
is bounded and finitely additive if (and only if) $x^*\circ \nu$ is bounded and finitely additive for every $x^* \in W$. This result
and the Rainwater-Simons theorem (see, e.g., \cite[Theorem~3.134]{fab-ultimo}) ensure that any James boundary of~$X$ has the OT property, see 
\cite[Proposition~2.9]{fer-alt-4} (cf. \cite[Remark~3.2(i)]{nyg-rod}).

The previous discussion makes clear that the particular
case of~$\ell_\infty$ is the most interesting one as the OT property is concerned.
In this paper we study the OT property in~$\ell_\infty$ and some aspects of the Banach lattices of real-valued functions which are integrable with respect to 
countably additive $\ell_\infty$-valued measures. The paper is organized as follows.

In Section~\ref{section:preliminaries} we introduce some terminology and preliminary facts.

In Section~\ref{section:OT} we focus on the OT property. We begin with some basic results in arbitrary Banach spaces, including an application to
the factorization of vector measures and their integration operators (Proposition~\ref{proposition:Susumu}). The core of this section is devoted to studying the OT property
in~$\ell_\infty$. Our main result here (Theorem~\ref{theorem:ellinfty-uncountable-2})
states that any subset of~$\ell_\infty^*$ having the OT property contains a copy of the usual basis of $\ell_1(\mathfrak{c})$, where $\mathfrak{c}$ stands for 
the cardinality of the continuum (i.e., $\mathfrak{c}=|\mathbb{R}|$). Some consequences of this result are given. 
Let us mention that the existence of copies of the usual basis of~$\ell_1(\mathfrak{c})$ inside James boundaries (which always have the OT property) 
has been studied thoroughly (see \cite{god5,gra-her} and the references therein).

In Section~\ref{section:L1} we study some structural properties of the Banach lattice $L_1(\nu)$ of real-valued
functions which are integrable with respect to a given $\nu\in {\rm ca}(\Sigma,X)$, where $\Sigma$ is a $\sigma$-algebra and $X$ is a Banach space.
These spaces play an important role in Banach lattice and operator theory. It is known that any order continuous Banach lattice having a weak order unit 
is lattice-isometric to a space of the form~$L_1(\nu)$. 
Suppose that $\nu$  has separable range or equivalently that $\nu$ takes its values in~$\ell_\infty$. Does it follow that $L_1(\nu)$ is separable?   
The answer is in the negative, in general, even if $\nu$ is a finite measure (see Subsection 2.4).  
However, the answer is in the affirmative with an additional assumption that $L_1(\nu)^*$ is order continuous as asserted in Theorem~\ref{theorem:countable-norming-general}.  
 We shall instead prove a more general fact: Theorem~\ref{theorem:general}, which asserts  that  if $E$ is a Banach lattice admitting a countable positively norming set
and both $E$ and~$E^*$ are order continuous, then $E$ is separable. In particular, if $\mu$ is a finite measure 
for which $L_1(\mu)$ is non-separable and $1<p<\infty$, then $L_p(\mu)$ is not isomorphic to $L_1(\nu)$ for any $\nu$ with separable range
(Example~\ref{example:Lp}).

\section{Terminology and preliminaries}\label{section:preliminaries}

Given a set $S$, we denote by $\mathcal{P}(S)$ its power set, that is, the set of all subsets of~$S$.
The cardinality of $S$ is denoted by~$|S|$. The {\em density character} of a topological space~$(T,\mathfrak{T})$, denoted by~${\rm dens}(T,\mathfrak{T})$
or simply ${\rm dens}(T)$, is the minimal cardinality of a $\mathfrak{T}$-dense subset of~$T$.

\subsection{Banach spaces}\label{subsection:Banach}

All Banach spaces considered in this paper are real. An {\em operator} is a continuous linear map between Banach spaces. Let $X$ be a Banach space.
The norm of~$X$ is denoted by $\|\cdot\|_X$ or simply~$\|\cdot\|$. 
The closed unit ball of~$X$ is $B_X:=\{x\in X:\|x\|\leq 1\}$. 
The weak (resp., weak$^*$) topology on~$X$ (resp., $X^*$) is denoted by~$w$ (resp., $w^*$).
By a {\em subspace} of~$X$ we mean a norm-closed linear subspace. In almost all cases we will deal with norm-closed linear subspaces, so we prefer to use
such an abridged terminology; when norm-closedness is not assumed, we 
use the term {\em linear subspace} unless otherwise stated.
 By a {\em projection} from~$X$ onto a subspace~$Y \sub X$
we mean an operator $P:X\to X$ such that $P(X)=Y$ and $P$ is the identity when restricted to~$Y$.
The convex hull and linear span of a set~$D\sub X$ are denoted by ${\rm co}(D)$ and ${\rm span}(D)$, respectively, and 
their closures are denoted by $\overline{{\rm co}}(D)$ and $\overline{{\rm span}}(D)$.
A set $B \sub B_{X^*}$ is said to be {\em norming} if there is a constant $c>0$ such that 
$\|x\|\leq c \sup_{x^*\in B}|x^*(x)| $ for every $x\in X$.
A set $B \sub B_{X^*}$ is said to be a {\em James boundary} of~$X$ if for every $x\in X$ there is $x^*\in B$ such that $\|x\|=x^*(x)$.
If $X$ is a Banach lattice, then its positive cone is $X^+=\{x\in X:x\geq 0\}$.
Given a compact Hausdorff topological space~$K$, we denote by $C(K)$ the Banach space of all real-valued continuous functions on~$K$, equipped
with the supremum norm.

\subsection{Banach function spaces}\label{subsection:Bfs}

Let $(\Omega,\Sigma,\mu)$ be a finite measure space. 
To define Banach function spaces, we consider linear subspaces, not necessarily norm-closed, of $L_1(\mu)$.
A Banach space~$E$ is said to be
a \emph{Banach function space} (or a \emph{K\"{o}the function space}) over $(\Omega,\Sigma,\mu)$ if the following conditions hold: 
\begin{enumerate}
\item[(i)] $E$ is a linear subspace of $L_1(\mu)$;
\item[(ii)] if $f\in L_0(\mu)$ and $|f| \leq |g|$ $\mu$-a.e. for some $g \in E$, then $f \in E$ and $\|f\|_{E} \leq \|g\|_{E}$;
\item[(iii)] the characteristic function $\chi_A$ of each $A \in \Sigma$ belongs to~$E$. 
\end{enumerate}
In this case, $E$ is a Banach lattice when endowed with the $\mu$-a.e. order and the inclusion map $E \to L_1(\mu)$ is an operator.
The {\em K\"{o}the dual} of~$E$ is 
$$
	E':=\{g\in L_1(\mu): \, fg \in L_1(\mu) \text{ for all $f\in E$}\}.
$$
Any $g \in E'$ gives raise to a functional $\varphi_g\in E^*$ defined by 
$\varphi_g(f):=\int_\Omega fg \, d\mu$ for all $f\in E$. It is known that
$E$ is order continuous if and only if $E^*=\{\varphi_g:g\in E'\}$ (see, e.g., \cite[p.~29]{lin-tza-2}).

\subsection{$L_1$ of a vector measure}\label{subsection:L1spaces}

Let $(\Omega,\Sigma)$ be a measurable space, let $X$ be a Banach space and let $\nu \in {\rm ca}(\Sigma,X)$. 
A set $A\in \Sigma$ is said to be {\em $\nu$-null} if 
$\nu(B)=0$ for every $B\in \Sigma$ with~$B\sub A$.
The family of all $\nu$-null sets is denoted by $\mathcal{N}(\nu)$. By a {\em Rybakov control measure} of~$\nu$
we mean a finite measure of the form $\mu=|x^*\circ \nu|$ (the variation of $x^*\circ \nu$) for some $x^*\in X^*$ such that 
$\mathcal{N}(\mu) = \mathcal{N}(\nu)$ (see, e.g., \cite[p.~268, Theorem~2]{die-uhl-J} for a proof of the existence
of Rybakov control measures). 
A $\Sigma$-measurable function $f:\Omega \to \mathbb{R}$ is called {\em $\nu$-integrable} if $f\in L_1(|x^*\circ \nu|)$ for all $x^*\in X^*$ and,
for each $A\in \Sigma$, there is $\int_A f \, d\nu\in X$
such that
$$
	x^*\left ( \int_A f \, d\nu \right)=\int_A f\,d(x^*\circ \nu)  	\quad\text{for all $x^*\in X^*$}.
$$
By identifying functions which coincide $\nu$-a.e., the set $L_1(\nu)$ of all (equivalence classes of) $\nu$-integrable functions  
is a Banach space with the norm
$$
	\|f\|_{L_1(\nu)}:=\sup_{x^*\in B_{X^*}}\int_\Omega |f|\,d|x^*\circ \nu|.
$$
The \emph{integration operator} is the (norm one) operator $I_\nu: L_1(\nu)\to X$ defined by  
$$
	I_\nu(f):=\int_\Omega f\, d\nu
	\quad\text{for all $f\in L_1(\nu)$.}
$$ 
If $\mu$ is any Rybakov control measure of~$\nu$, then $L_1(\nu)$ is
a Banach function space over $(\Omega,\Sigma,\mu)$. As a Banach lattice, $L_1(\nu)$ is order continuous and
has a weak order unit (the function $\chi_\Omega$). Conversely, any order continuous Banach lattice
having a weak order unit is lattice-isometric to the $L_1$ space of a countably additive vector measure defined
on a $\sigma$-algebra. Indeed, on the one hand, such a
Banach lattice is lattice-isometric to a Banach function space~$E$ over some finite measure space $(\Omega,\Sigma,\mu)$
(see, e.g., \cite[Theorem~1.b.14]{lin-tza-2}). On the other hand, thanks to the order continuity of~$E$, the map $\nu:\Sigma \to E$ given by $\nu(A):=\chi_A$
for all $A\in \Sigma$ is countably additive and one has $E=L_1(\nu)$ (see \cite[Theorem~8]{cur1}, \cite[Proposition~2.4(vi)]{dep-alt}).
We refer the reader to \cite[Chapter~3]{oka-alt}
for more information on the $L_1$ space of a vector measure.

\subsection{The usual measure on~$\{0,1\}^I$}\label{subsection:Cantor}

Let $I$ be a non-empty set. For each $i\in I$ we denote by $\pi_i:\{0,1\}^I \to \{0,1\}$
the $i$th-coordinate projection. Let $\Sigma_I$ be the $\sigma$-algebra on~$\{0,1\}^I$ generated by all the sets of the form $\bigcap_{i\in F}\pi_i^{-1}(w(i))$,
where $F \sub I$ is finite and $w\in\{0,1\}^F$. The usual product probability measure on~$\{0,1\}^I$, denoted by~$\lambda_I$, is 
defined on~$\Sigma_I$ and satisfies $\lambda_I(\pi_i^{-1}(\{0\}))=\lambda_I(\pi_i^{-1}(\{1\}))=\frac{1}{2}$
for all $i\in I$. For simplicity, we just call $\lambda_I$ the {\em usual measure on~$\{0,1\}^I$}. 
We have ${\rm dens}(L_1(\lambda_I))=|I|$ if $I$ is infinite.
In particular, $L_1(\lambda_I)$ is not separable whenever $I$ is uncountable. 
We refer the reader to \cite[\S254]{freMT-2} for more information on infinite product measures
and the usual measure on~$\{0,1\}^I$.

\subsection{Measure algebras}\label{subsection:MeasureAlgebras}

Let $(\Omega,\Sigma,\mu)$ be a probability space.
We consider the equivalence relation on~$\Sigma$ defined by $A \sim B$ if and only if $\mu(A\triangle B)=0$.
The set of equivalence classes, denoted by $\Sigma/\mathcal{N(\mu)}$, becomes a measure algebra
when equipped with the usual Boolean algebra operations
and the functional defined by $\mu^{\bullet}(A^{\bullet}):=\mu(A)$ for all $A\in \Sigma$, where $A^{\bullet}\in \Sigma/\mathcal{N}(\mu)$
denotes the equivalence class of~$A$. Given another probability space $(\Omega_0,\Sigma_0,\mu_0)$,
the measure algebras of~$\mu$ and~$\mu_0$ are said to be {\em isomorphic}
if there is a Boolean algebra isomorphism
$$
	\theta: \Sigma/\mathcal{N}(\mu) \to \Sigma_0/\mathcal{N}(\mu_0)
$$
such that $\mu_0^{\bullet}\circ \theta=\mu^{\bullet}$.
In this case, there is a lattice isometry $\Phi:L_1(\mu)\to L_1(\mu_0)$
such that $\int_\Omega f\, d\mu=\int_{\Omega_0}\Phi(f)\, d\mu_0$ and $\Phi(f\chi_B)=\Phi(f)\chi_{C}$ 
whenever $f\in L_1(\mu)$ and $\theta(B^{\bullet})=C^{\bullet}$.
For more information on measure algebras, see \cite{fre14}.

\section{The Orlicz-Thomas property}\label{section:OT}

\subsection{The OT property in arbitrary Banach spaces}
Throughout this subsection $X$ is a Banach space. 
We begin with an observation:

\begin{proposition}\label{proposition:OTimpliestotal}
If $W \sub X^*$ has the OT property, then $W$ is total.
\end{proposition}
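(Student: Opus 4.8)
The plan is to argue by contraposition: assuming that $W$ is not total, I will produce a map $\nu$ defined on a $\sigma$-algebra that witnesses the failure of the OT property for~$W$. Concretely, since $W$ is not total, $\bigcap_{x^*\in W}\ker x^* \neq \{0\}$, so I fix a nonzero vector $x_0 \in X$ with $x^*(x_0)=0$ for every $x^*\in W$. The idea is to use $x_0$ to manufacture a map whose compositions with all functionals in~$W$ vanish identically, yet which fails countable additivity — an abstract version of the $\ell_\infty$-valued example $A\mapsto\chi_A$ recalled in the introduction.

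I take $\Sigma := \mathcal{P}(\N)$ and define $\nu : \mathcal{P}(\N) \to X$ by setting $\nu(A):=x_0$ when $A$ is infinite and $\nu(A):=0$ when $A$ is finite. For each $x^*\in W$ the composition $x^*\circ \nu$ is identically~$0$, hence $x^*\circ\nu\in {\rm ca}(\mathcal{P}(\N))$. On the other hand, writing $\N=\bigcup_{n\in\N}\{n\}$ as a disjoint union of singletons, one has $\sum_{n\in\N}\nu(\{n\})=0\neq x_0=\nu(\N)$, so $\nu$ is not countably additive. Therefore $W$ does not have the OT property, which proves the contrapositive.

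I do not expect any real obstacle here; the only point worth keeping in mind is that Definition~\ref{definition:OT} places no a priori additivity requirement on the test map~$\nu$, so the crude choice above is admissible. If a finitely additive (but not countably additive) witness is preferred for aesthetic reasons, one can instead take $\nu(A):=m(A)\,x_0$, where $m:\mathcal{P}(\N)\to\{0,1\}$ is the finitely additive measure associated with a free ultrafilter on~$\N$; then $m$ is not countably additive, $\nu$ inherits finite additivity from~$m$, and again $x^*\circ\nu\equiv 0$ for all $x^*\in W$.
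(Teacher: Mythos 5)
Your proof is correct and follows essentially the same route as the paper: take a nonzero $x_0$ annihilated by every $x^*\in W$ and multiply it by a scalar set function on $\mathcal{P}(\N)$ that fails countable additivity (the paper uses a generic finitely additive, non-countably additive $\xi$ and sets $\nu(A):=\xi(A)x_0$, which is exactly your ultrafilter variant). Your cruder primary witness is also admissible, since, as you note, Definition~\ref{definition:OT} imposes no additivity on the test map.
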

\begin{proof}
If $W$ is not total, then there is $x\in X\setminus \{0\}$ such that $x^*(x)=0$ for all $x^*\in W$. Let $\xi:\mathcal{P}(\N)\to [0,1]$
be a finitely additive measure which is not countably additive. 
Then there is a disjoint sequence $(A_n)_{n\in \N}$ in $\mathcal{P}(\N)$
such that the sequence $(\xi(\bigcup_{m>n}A_m))_{n\in \N}$ does not converge to~$0$.
Define $\nu:\mathcal{P}(\N) \to X$ by $\nu(A):=\xi(A)x$ for all $A \sub \N$. For each $x^*\in W$ we have
$(x^*\circ \nu)(A)=0$ for all $A\sub \N$, hence $x^*\circ \nu\in {\rm ca}(\mathcal{P}(\N))$. Since $\|\nu(\bigcup_{m>n}A_m)\|=\xi(\bigcup_{m>n}A_m)\|x\|$ for every $n\in \N$, 
we have $\nu\not\in {\rm ca}(\mathcal{P}(\N),X)$.
\end{proof}

The following proposition is straightforward.

\begin{proposition}\label{proposition:convex}
Let  $W \sub X^*$. The following statements are equivalent: 
\begin{enumerate}
\item[(i)] $W$ has the OT property. 
\item[(ii)] ${\rm co}(W)$ has the OT property.
\item[(iii)] ${\rm span}(W)$ has the OT property.
\end{enumerate} 
\end{proposition}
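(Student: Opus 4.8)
The plan is to prove the equivalence of (i), (ii), (iii) in Proposition~\ref{proposition:convex} by a chain of elementary observations, reducing everything to the behaviour of compositions $x^* \circ \nu$ under linear operations on the functional $x^*$. First I would record the basic fact that the set ${\rm ca}(\Sigma)$ of real-valued countably additive measures on a fixed $\sigma$-algebra~$\Sigma$ is a (real) vector space: if $\mu_1,\mu_2 \in {\rm ca}(\Sigma)$ and $\alpha,\beta \in \mathbb{R}$, then $\alpha \mu_1 + \beta \mu_2 \in {\rm ca}(\Sigma)$. This is immediate from the definition and from the fact that a uniformly absolutely convergent series can be summed termwise; alternatively it follows because a finitely additive scalar measure is countably additive iff it is continuous at~$\emptyset$, a condition clearly preserved by linear combinations.

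Next I would observe the compatibility of composition with linear combinations of functionals: for any map $\nu : \Sigma \to X$, any $x_1^*, x_2^* \in X^*$ and any scalars $\alpha, \beta$, we have $(\alpha x_1^* + \beta x_2^*) \circ \nu = \alpha (x_1^* \circ \nu) + \beta (x_2^* \circ \nu)$ as set functions on~$\Sigma$. Combining this with the previous paragraph gives the key implication: if $x^* \circ \nu \in {\rm ca}(\Sigma)$ for all $x^*$ in a set $W$, then the same holds for all $x^*$ in ${\rm span}(W)$ (in particular for all $x^*$ in ${\rm co}(W)$). Consequently, for \emph{any} map $\nu$, the hypothesis ``$x^* \circ \nu \in {\rm ca}(\Sigma)$ for all $x^* \in W$'' is equivalent to ``$x^* \circ \nu \in {\rm ca}(\Sigma)$ for all $x^* \in {\rm co}(W)$'' and to ``$\ldots$ for all $x^* \in {\rm span}(W)$''.

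From this the three equivalences follow almost formally. Since $W \sub {\rm co}(W) \sub {\rm span}(W)$, the implications (iii)$\impli$(ii)$\impli$(i) are trivial: enlarging the test set can only make the OT condition easier to verify, hence if the larger set forces countable additivity so does... wait, that is the wrong direction, so I should phrase it carefully. The correct reading: if ${\rm span}(W)$ has the OT property and $x^* \circ \nu \in {\rm ca}(\Sigma)$ for all $x^* \in W$, then by the preceding paragraph $x^* \circ \nu \in {\rm ca}(\Sigma)$ for all $x^* \in {\rm span}(W)$, and the OT property of ${\rm span}(W)$ yields $\nu \in {\rm ca}(\Sigma,X)$; thus $W$ has the OT property. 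The same argument gives (iii)$\impli$(ii) and (ii)$\impli$(i). Conversely, (i)$\impli$(iii): if $W$ has the OT property and $x^* \circ \nu \in {\rm ca}(\Sigma)$ for all $x^* \in {\rm span}(W)$, then in particular this holds for all $x^* \in W$, so $\nu \in {\rm ca}(\Sigma,X)$; thus ${\rm span}(W)$ has the OT property. Likewise (i)$\impli$(ii), or one simply notes $W \sub {\rm co}(W) \sub {\rm span}(W)$ and runs the monotonicity of the OT property in the appropriate direction both ways to close the cycle.

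There is no real obstacle here — the proposition is genuinely ``straightforward'' as stated, the only thing worth being careful about is the \emph{direction} of the monotonicity: the OT property is inherited by subsets in one direction and by the linear/convex hull in the other, and both inheritances hold here precisely because the set of $x^*$ satisfying ``$x^* \circ \nu$ countably additive'' is already a linear subspace of~$X^*$. So the heart of the matter is the one-line computation $(\alpha x_1^* + \beta x_2^*)\circ\nu = \alpha(x_1^*\circ\nu) + \beta(x_2^*\circ\nu)$ together with linearity of~${\rm ca}(\Sigma)$; everything else is bookkeeping, and I would present it in a couple of sentences rather than belabouring each of the six implications.
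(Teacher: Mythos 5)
Your proof is correct and is exactly the intended argument: the paper states the proposition without proof, calling it straightforward, and the straightforward proof is the one you give, namely that for any fixed $\nu$ the set of $x^*\in X^*$ with $x^*\circ\nu\in{\rm ca}(\Sigma)$ is a linear subspace, so the hypothesis of the OT condition is unchanged in passing between $W$, ${\rm co}(W)$ and ${\rm span}(W)$, while the trivial inclusion $W\sub{\rm co}(W)\sub{\rm span}(W)$ handles the remaining direction. The only cosmetic remark is that the mid-paragraph self-correction (``wait, that is the wrong direction'') should be edited out of any final write-up; the corrected statement that follows it is the right one.
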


As usual, $\omega_1$ denotes the first uncountable ordinal.
Given any  set $D \sub X^*$, we denote by $S_1(D) \sub X^*$ the set of all limits of $w^*$-convergent 
sequences contained in~$D$. For any ordinal $\alpha\leq \omega_1$, we define $S_\alpha(D)$
by transfinite induction as follows: 
\begin{itemize}
\item $S_0(D):=D$, 
\item $S_{\alpha}(D):=S_1(S_\beta(D))$ if $\alpha=\beta+1$ for some ordinal $\beta<\omega_1$, 
\item $S_{\alpha}(D):=\bigcup_{\beta<\alpha}S_\beta(D)$
if $\alpha$ is a limit ordinal. 
\end{itemize}
Then $S_{\omega_1}(D)$ is the smallest $w^*$-sequentially closed
subset of~$X^*$ containing~$D$. In general, we have 
$$
	D \sub \overline{D}^{\|\cdot\|} \sub S_1(D) \sub S_{\omega_1}(D) \sub \overline{D}^{w^*}.
$$

\begin{proposition}\label{proposition:OT-sequential}
Let $W \sub X^*$. The following statements are equivalent: 
\begin{enumerate}
\item[(i)] $W$ has the OT property. 
\item[(ii)] $\overline{W}^{\|\cdot\|}$ has the OT property.
\item[(iii)] $S_1(W)$ has the OT property.
\item[(iv)] $S_{\omega_1}(W)$ has the OT property.
\item[(v)] $\overline{W}^{w}$ has the OT property.
\end{enumerate}
\end{proposition}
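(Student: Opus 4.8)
The plan is to prove the chain of equivalences $(i)\Leftrightarrow(ii)\Leftrightarrow(iii)\Leftrightarrow(iv)\Leftrightarrow(v)$ by exploiting that all the relevant operations preserve the OT property in both directions. The easy direction is always monotonicity: if $W\sub W'$ and $W$ has the OT property, then so does $W'$, since any $\nu$ that is tested successfully on all of $W'$ is in particular tested successfully on~$W$. Since $W\sub \overline{W}^{\|\cdot\|}\sub S_1(W)\sub S_{\omega_1}(W)\sub \overline{W}^{w^*}$ and also $W\sub\overline{W}^{w}$, this gives for free that $(i)$ implies each of $(ii)$--$(v)$ once we know those supersets sit above $W$; the content is entirely in the converse implications, i.e. that enlarging $W$ to any of these sets does not create new OT power.

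First I would reduce $(v)$ to the case of convex sets: by Proposition~\ref{proposition:convex}, $W$ has the OT property iff ${\rm co}(W)$ does, and for a convex set the weak closure and the norm closure coincide (Mazur's theorem), so $\overline{W}^{w}$ has the OT property iff $\overline{{\rm co}(W)}^{\|\cdot\|}=\overline{W}^{w}$ does, iff (by the norm-closure equivalence, which is the heart of the matter) ${\rm co}(W)$ does, iff $W$ does. So $(i)\Leftrightarrow(v)$ follows once we have $(i)\Leftrightarrow(ii)$ together with Proposition~\ref{proposition:convex}. Hence the whole proposition comes down to two claims: (a) $S_1(W)$ has the OT property whenever $W$ does (which subsumes the norm-closure statement, since $\overline{W}^{\|\cdot\|}\sub S_1(W)$, and iterating transfinitely with the limit-ordinal clause gives $S_{\omega_1}(W)$ as well); and (b) $S_{\omega_1}(W)$ has the OT property $\Rightarrow$ $W$ does — but this last is just monotonicity. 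So really everything rests on: \textbf{if $W$ has the OT property, then so does $S_1(W)$.}

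For that key step, let $\nu:\Sigma\to X$ be defined on a $\sigma$-algebra with $x^*\circ\nu\in{\rm ca}(\Sigma)$ for every $x^*\in S_1(W)$; I must show $\nu\in{\rm ca}(\Sigma,X)$. Since $W\sub S_1(W)$, the hypothesis already gives $x^*\circ\nu\in{\rm ca}(\Sigma)$ for all $x^*\in W$, and the OT property of~$W$ yields $\nu\in{\rm ca}(\Sigma,X)$ directly — so in fact $(iii)\Rightarrow(i)$, and likewise $(iv)\Rightarrow(i)$, is immediate from monotonicity too! The asymmetry I was imagining evaporates: in all cases the ``harder'' direction is the trivial monotone one, because each listed set contains~$W$. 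The genuinely nontrivial direction is $(i)\Rightarrow(iii)$ and $(i)\Rightarrow(iv)$: knowing only that $x^*\circ\nu$ is countably additive for $x^*\in W$, conclude it for every $x^*$ obtained as a $w^*$-limit of a sequence from~$W$ (and then transfinitely). But if $\nu\in{\rm ca}(\Sigma,X)$ — which the OT property of $W$ gives us from the $W$-hypothesis — then by the Orlicz--Pettis theorem $x^*\circ\nu\in{\rm ca}(\Sigma)$ for \emph{every} $x^*\in X^*$, a fortiori for every $x^*\in S_{\omega_1}(W)$ and every $x^*\in\overline{W}^{w}$. So the argument is: assume (i); given $\nu$ with $x^*\circ\nu\in{\rm ca}(\Sigma)$ for $x^*\in$ (the larger set), restrict to~$W$, apply (i) to get $\nu\in{\rm ca}(\Sigma,X)$, done — and conversely given $\nu$ with $x^*\circ\nu\in{\rm ca}(\Sigma)$ for $x^*\in W$, apply (i) to get $\nu$ countably additive, then Orlicz--Pettis upgrades this to all $x^*$, so the larger set's condition also forces countable additivity. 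The main obstacle is essentially bookkeeping: making sure the transfinite definition of $S_\alpha(D)$ with its limit-ordinal union is threaded correctly and citing Orlicz--Pettis and Proposition~\ref{proposition:convex}/Mazur at the right points; there is no real analytic difficulty.

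\begin{proof}
We use repeatedly the obvious monotonicity of the OT property: if $W\sub W'\sub X^*$ and $W$ has the OT property, then so does~$W'$, because any map $\nu$ on a $\sigma$-algebra with $x^*\circ\nu$ countably additive for all $x^*\in W'$ satisfies the same for all $x^*\in W$ and hence lies in ${\rm ca}(\Sigma,X)$.

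Since $W\sub \overline{W}^{\|\cdot\|}\sub S_1(W)\sub S_{\omega_1}(W)$ and $W\sub\overline{W}^{w}$, monotonicity gives that (i) implies each of (ii), (iii), (iv) and (v).

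Conversely, assume (i), i.e. $W$ has the OT property, and let $W'$ be any of $\overline{W}^{\|\cdot\|}$, $S_1(W)$, $S_{\omega_1}(W)$ or $\overline{W}^{w}$; we show $W'$ has the OT property directly (which, combined with the previous paragraph, actually shows all five statements are equivalent). Let $\Sigma$ be a $\sigma$-algebra and let $\nu:\Sigma\to X$ be a map with $x^*\circ\nu\in{\rm ca}(\Sigma)$ for every $x^*\in W'$. Since $W\sub W'$, in particular $x^*\circ\nu\in{\rm ca}(\Sigma)$ for every $x^*\in W$, so the OT property of~$W$ yields $\nu\in{\rm ca}(\Sigma,X)$. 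Hence $W'$ has the OT property, and (i) is equivalent to each of (ii)--(v).

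(For completeness we note that (v) can also be deduced from (i) via Proposition~\ref{proposition:convex}: $W$ has the OT property iff ${\rm co}(W)$ does, and by Mazur's theorem $\overline{{\rm co}(W)}^{w}=\overline{{\rm co}(W)}^{\|\cdot\|}$, so the equivalence of (i), (ii) applied to ${\rm co}(W)$ gives that $\overline{W}^{w}\sub\overline{{\rm co}(W)}^{\|\cdot\|}$ has the OT property.)
\end{proof}
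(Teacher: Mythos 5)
There is a genuine gap: your proof never establishes the implications $(ii)\Rightarrow(i)$, $(iii)\Rightarrow(i)$, $(iv)\Rightarrow(i)$, $(v)\Rightarrow(i)$, which are the only non-trivial part of the proposition. Your monotonicity observation is correct and does give $(i)\Rightarrow(ii)\Rightarrow(iii)\Rightarrow(iv)$ and $(i)\Rightarrow(v)$: enlarging the test set strengthens the hypothesis of the OT implication, so supersets inherit the property from subsets. But in the paragraph beginning ``Conversely, assume (i)'' you assume $(i)$ again and re-derive exactly the same easy direction; at no point do you assume one of $(ii)$--$(v)$ and deduce $(i)$. The confusion is visible in your discussion, where the argument ``$x^*\circ\nu\in{\rm ca}(\Sigma)$ for all $x^*\in S_1(W)$ implies it for all $x^*\in W$, so the OT property of $W$ applies'' is labelled ``$(iii)\Rightarrow(i)$'' when it is in fact $(i)\Rightarrow(iii)$: it uses the OT property of $W$ as a hypothesis. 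The subsequent appeal to Orlicz--Pettis is circular for the same reason, since it first invokes the OT property of $W$ to get $\nu\in{\rm ca}(\Sigma,X)$, which is precisely what is to be proved.

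The genuinely hard direction runs the other way. Assume, say, that $S_{\omega_1}(W)$ has the OT property and let $\nu:\Sigma\to X$ satisfy $x^*\circ\nu\in{\rm ca}(\Sigma)$ only for $x^*\in W$. To invoke the OT property of the \emph{larger} set one must first upgrade this hypothesis to every $x^*\in S_{\omega_1}(W)$, and that upgrade is not formal: if $x^*$ is the $w^*$-limit of a sequence $(x_n^*)$ in $W$, then $(x_n^*\circ\nu)(A)\to(x^*\circ\nu)(A)$ for each $A\in\Sigma$, and one needs the Vitali--Hahn--Saks--Nikod\'{y}m theorem to conclude that the setwise limit $x^*\circ\nu$ of countably additive measures on a $\sigma$-algebra is again countably additive; a transfinite induction then carries this through all $S_\alpha(W)$ up to $\alpha=\omega_1$, after which the OT property of $S_{\omega_1}(W)$ yields $\nu\in{\rm ca}(\Sigma,X)$. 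Since $\overline{W}^{\|\cdot\|}\subseteq S_1(W)\subseteq S_{\omega_1}(W)$, this single argument also disposes of $(ii)\Rightarrow(i)$ and $(iii)\Rightarrow(i)$. For $(v)\Rightarrow(i)$ your parenthetical remark has the right ingredients (Mazur plus Proposition~\ref{proposition:convex}), but it must be deployed in the correct direction: from $\overline{W}^{w}\subseteq\overline{{\rm co}(W)}^{w}=\overline{{\rm co}(W)}^{\|\cdot\|}$ and monotonicity one gets that $\overline{{\rm co}(W)}^{\|\cdot\|}$ has the OT property, then the already-proved implication $(ii)\Rightarrow(i)$ applied to ${\rm co}(W)$ and Proposition~\ref{proposition:convex} give the OT property of $W$. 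None of this appears in your write-up, so the proof as it stands only covers the trivial half of the equivalences.
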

\begin{proof}
The implications (i)$\impli$(ii)$\impli$(iii)$\impli$(iv) and (i)$\impli$(v) are obvious.

(iv)$\impli$(i): Suppose that $S_{\omega_1}(W)$ has the OT property.
Let $\nu:\Sigma \to X$ be a map defined on a $\sigma$-algebra~$\Sigma$ such that $x^* \circ\nu \in {\rm ca}(\Sigma)$ for all $x^*\in W$.
Bearing in mind the Vitali-Hahn-Saks-Nikod\'{y}m theorem (see, e.g., \cite[p.~24, Corollary~10]{die-uhl-J}), 
a standard transfinite induction argument shows that, for each ordinal $\alpha\leq \omega_1$, we have $x^*\circ \nu\in {\rm ca}(\Sigma)$  for all $x^*\in S_\alpha(W)$.
In particular, this holds for $\alpha=\omega_1$ and so $\nu\in {\rm ca}(\Sigma,X)$.

(v)$\impli$(i): Since $\overline{W}^w \sub \overline{{\rm co}(W)}^{w}=\overline{{\rm co}(W)}^{\|\cdot\|}$, the set $\overline{{\rm co}(W)}^{\|\cdot\|}$ has the OT property.
By the equivalence (i)$\Leftrightarrow$(ii) applied to ${\rm co}(W)$, this set has the OT property and so does~$W$ (by Proposition~\ref{proposition:convex}). 
\end{proof}

The following proposition gives an operator theoretic reformulation of the OT property.
Given a $\sigma$-algebra~$\Sigma$, the set ${\rm ca}(\Sigma)$ is a Banach space
when equipped with the variation norm. It is known that a set $H \sub {\rm ca}(\Sigma)$ is relatively weakly compact if and only if it is bounded
and there is a non-negative $\mu_0 \in {\rm ca}(\Sigma)$ such that $H$ is {\em uniformly $\mu_0$-continuous}, i.e.,
for every $\epsilon>0$ there is $\delta>0$ such that $\sup_{\mu\in H}|\mu(A)|\leq \epsilon$ for every $A\in \Sigma$ satisfying $\mu_0(A)\leq \delta$
(see, e.g., \cite[p.~92, Theorem~13]{die-J}).

\begin{proposition}\label{proposition:OT-operator}
Let $W \sub X^*$ be a subspace and let $\nu:\Sigma \to X$ be a map defined on a $\sigma$-algebra~$\Sigma$ such that $x^*\circ \nu \in {\rm ca}(\Sigma)$ 
for all $x^*\in W$. Then the map
$$
	T:W \to {\rm ca}(\Sigma),
	\quad
	T(x^*):=x^*\circ \nu,
$$
is an operator and the following statements hold:
\begin{enumerate}
\item[(ii)] If $\nu\in {\rm ca}(\Sigma,X)$, then $T$ is weakly compact.
\item[(iii)] If $T$ is weakly compact and $B_{W} \sub B_{X^*}$ is norming, then $\nu \in {\rm ca}(\Sigma,X)$.
\end{enumerate}
\end{proposition}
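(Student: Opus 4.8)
The plan is to verify the three assertions in turn. First I would check that $T$ is a well-defined operator. Well-definedness is clear since $x^*\circ\nu\in{\rm ca}(\Sigma)$ for all $x^*\in W$ by hypothesis, and linearity is immediate. For boundedness, I would use a closed graph argument: since $W$ is a subspace (hence a Banach space) and ${\rm ca}(\Sigma)$ with the variation norm is a Banach space, it suffices to show $T$ has closed graph. If $x^*_n\to x^*$ in norm in~$W$ and $T(x^*_n)\to\mu$ in variation in~${\rm ca}(\Sigma)$, then for each fixed $A\in\Sigma$ we have $(x^*_n\circ\nu)(A)=x^*_n(\nu(A))\to x^*(\nu(A))=(x^*\circ\nu)(A)$, while also $(x^*_n\circ\nu)(A)\to\mu(A)$ from variation convergence; hence $\mu=x^*\circ\nu=T(x^*)$. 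Alternatively one can argue directly: the map $W\ni x^*\mapsto(x^*\circ\nu)(A)\in\mathbb R$ is bounded for each $A$ (by $\|\nu(A)\|$), so the family $\{T(x^*):x^*\in B_W\}$ is pointwise bounded on~$\Sigma$, and then the Nikod\'ym boundedness theorem (see, e.g., \cite[p.~14, Theorem~1]{die-uhl-J}) gives uniform boundedness in variation.

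For~(ii): assume $\nu\in{\rm ca}(\Sigma,X)$. Let $\mu_0=|x_0^*\circ\nu|$ be a Rybakov control measure of~$\nu$, so $\mathcal N(\mu_0)=\mathcal N(\nu)$. Then $\nu$ is $\mu_0$-continuous, and moreover uniformly so on~$B_{X^*}$: for every $\epsilon>0$ there is $\delta>0$ with $\sup_{x^*\in B_{X^*}}|(x^*\circ\nu)(A)|=\|\nu(A)\|\leq\epsilon$ whenever $\mu_0(A)\leq\delta$ (this is the standard fact that a countably additive vector measure is uniformly continuous with respect to any of its Rybakov, or merely control, measures; see, e.g., \cite[p.~10--11]{die-uhl-J}). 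A fortiori the set $\{T(x^*):x^*\in B_W\}\subseteq\{x^*\circ\nu:x^*\in B_{X^*}\}$ is bounded and uniformly $\mu_0$-continuous, hence relatively weakly compact in~${\rm ca}(\Sigma)$ by the criterion quoted just before the proposition. Thus $T$ maps $B_W$ into a relatively weakly compact set, i.e., $T$ is weakly compact.

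For~(iii): assume $T$ is weakly compact and $B_W\subseteq B_{X^*}$ is norming, with constant $c>0$. By the Orlicz-Pettis theorem it suffices to show $x^*\circ\nu\in{\rm ca}(\Sigma)$ for every $x^*\in X^*$. By weak compactness, $\{T(x^*):x^*\in B_W\}=\{x^*\circ\nu:x^*\in B_W\}$ is relatively weakly compact in~${\rm ca}(\Sigma)$, so by the cited criterion it is bounded and there is a non-negative $\mu_0\in{\rm ca}(\Sigma)$ such that it is uniformly $\mu_0$-continuous. Now fix an arbitrary disjoint sequence $(A_n)$ in~$\Sigma$. I want to show $\|\nu(A_n)\|\to 0$; this is equivalent to countable additivity of~$\nu$ (together with finite additivity, which is inherited from that of each $x^*\circ\nu$ as $W$ is total — note $B_W$ norming forces $W$ total). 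Since $\mu_0$ is countably additive, $\mu_0\big(\bigcup_{m\geq n}A_m\big)\to 0$, so in particular $\mu_0(A_n)\to 0$; by uniform $\mu_0$-continuity, $\sup_{x^*\in B_W}|(x^*\circ\nu)(A_n)|\to 0$. But $B_W$ is norming, so $\|\nu(A_n)\|\leq c\sup_{x^*\in B_W}|x^*(\nu(A_n))|=c\sup_{x^*\in B_W}|(x^*\circ\nu)(A_n)|\to 0$. Hence $\nu\in{\rm ca}(\Sigma,X)$.

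I do not anticipate a serious obstacle here; the main point to get right is the precise invocation of the relative-weak-compactness characterization of subsets of~${\rm ca}(\Sigma)$ (bounded $+$ uniform $\mu_0$-continuity) in both directions, and the observation in~(iii) that the norming property is exactly what transfers the smallness of the real-valued evaluations $|(x^*\circ\nu)(A_n)|$ back to the vector norms $\|\nu(A_n)\|$. The one subtlety worth flagging is that in~(iii) one must first secure finite additivity of~$\nu$ as an $X$-valued map before speaking of its countable additivity — this is free because $B_W$ norming implies $W$ total, so $\nu(A\cup B)=\nu(A)+\nu(B)$ can be tested against all $x^*\in W$.
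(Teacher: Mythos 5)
Your overall strategy coincides with the paper's: a closed graph argument for the operator property, the characterization of relatively weakly compact subsets of ${\rm ca}(\Sigma)$ (bounded plus uniformly $\mu_0$-continuous) for~(ii), and uniform $\mu_0$-continuity combined with the norming inequality for~(iii). The only cosmetic differences are that the paper obtains its control measure from \cite[p.~14, Corollary~6]{die-uhl-J} rather than a Rybakov measure, and that in~(iii) it routes the norming hypothesis through the Hahn--Banach inclusion $\overline{B_W}^{w^*}\supseteq cB_{X^*}$, whereas you apply the defining inequality $\|x\|\leq c\sup_{x^*\in B_W}|x^*(x)|$ directly --- your version is, if anything, more direct.

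There is, however, one genuinely incorrect step in your~(iii): the claim that, for a finitely additive $\nu$ on a $\sigma$-algebra, having $\|\nu(A_n)\|\to 0$ for every disjoint sequence $(A_n)$ is \emph{equivalent} to countable additivity. That condition (exhaustivity) is strictly weaker: a $\{0,1\}$-valued finitely additive measure on $\mathcal{P}(\N)$ induced by a free ultrafilter satisfies it (two disjoint sets cannot both lie in the ultrafilter, so the values vanish eventually along any disjoint sequence) yet is not countably additive. So proving $\|\nu(A_n)\|\to 0$ does not, by itself, finish the argument. The repair is immediate and stays entirely within your own computation: the correct criterion is $\|\nu(\bigcup_{m>n}A_m)\|\to 0$, and your estimate applies verbatim to the tails $B_n:=\bigcup_{m>n}A_m$, since $\mu_0(B_n)\to 0$ (which you already observed), hence $\sup_{x^*\in B_W}|(x^*\circ\nu)(B_n)|\to 0$ by uniform $\mu_0$-continuity, whence $\|\nu(B_n)\|\leq c\sup_{x^*\in B_W}|(x^*\circ\nu)(B_n)|\to 0$. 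Equivalently, note that your $\epsilon$--$\delta$ estimate already shows $\nu$ is $\mu_0$-continuous ($\|\nu(A)\|\leq c\epsilon$ whenever $\mu_0(A)\leq\delta$), which together with finite additivity gives countable additivity; this is exactly how the paper concludes. The appeal to the Orlicz--Pettis theorem at the start of your~(iii) is never used and should be deleted.
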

\begin{proof} A routine application of the Closed Graph Theorem ensures that $T$ is an operator.

(i) The fact that $\nu$ is countably additive ensures the existence of a non-negative $\mu_0 \in {\rm ca}(\Sigma)$ such that
$\{x^*\circ \nu: x^*\in B_{X^*}\} \supseteq T(B_W)$ is uniformly $\mu_0$-continuous
(see, e.g., \cite[p.~14, Corollary~6]{die-uhl-J}).

(ii) Since $B_W$ is absolutely convex and norming, we have 
\begin{equation}\label{eqn:norm}
	\overline{B_W}^{w^*} \supseteq c B_{X^*}
\end{equation}
for some $c>0$, by the Hahn-Banach separation theorem. Fix a non-negative  
$\mu_0\in {\rm ca}(\Sigma)$ such that $T(B_W)$ is uniformly $\mu_0$-continuous.
Observe that $\nu$ is finitely additive because $W$ is total (bear in mind that $B_W$ is norming and so total) and $x^*\circ \nu$ is finitely additive for every $x^*\in W$.  
To prove that $\nu$ is countably additive it suffices to check that it is $\mu_0$-continuous. 
Fix $\epsilon>0$. Choose $\delta>0$ such that
$$
	|x^*(\nu(A))|\leq c\epsilon
	\quad
	\text{for every $A\in \Sigma$ with $\mu_0(A)\leq \delta$ and for every $x^*\in B_W$}.
$$
Clearly, the previous inequality is also valid for all $x^*\in \overline{B_W}^{w^*}$ and then \eqref{eqn:norm}
implies that
$$
	\|\nu(A)\|=\sup_{x^*\in B_{X^*}}|x^*(\nu(A)) |\leq \epsilon
	\quad
	\text{for every $A\in \Sigma$ with $\mu_0(A)\leq \delta$}.
$$
Therefore, $\nu$ is $\mu_0$-continuous and so it is countably additive.
\end{proof}

\begin{proposition}\label{proposition:injective}
Let $T: X \to Y$ be an operator between Banach spaces and let $\nu:\Sigma \to X$ be a map defined on a $\sigma$-algebra~$\Sigma$. The following statements hold:
\begin{enumerate}
\item[(i)] $T^*(B_{Y^*})$ is a $w^*$-compact subset of~$X^*$.
\item[(ii)] $T^*(B_{Y^*})$ is total if and only if $T$ is injective.
\item[(iii)]  $T\circ \nu \in {\rm ca}(\Sigma,Y)$ if and only if $x^* \circ \nu \in {\rm ca}(\Sigma)$ for all $x^*\in T^*(B_{Y^*})$.
\item[(iv)] Suppose that $T^*(B_{Y^*})$ has the OT property. Then $\nu \in {\rm ca}(\Sigma,X)$ if and only if $T\circ \nu \in {\rm ca}(\Sigma,Y)$.
\item[(v)] If $T^{**}$ is injective, then $T^*(B_{Y^*})$ has the OT property.
\end{enumerate}
\end{proposition}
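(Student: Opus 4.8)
The plan is to settle (i)--(iv) by standard arguments and to reduce (v) to the invariance properties of the OT property established in Propositions~\ref{proposition:convex} and~\ref{proposition:OT-sequential}. For (i): $B_{Y^*}$ is $w^*$-compact by Banach--Alaoglu and $T^*\colon Y^*\to X^*$ is $w^*$-to-$w^*$ continuous, so $T^*(B_{Y^*})$ is $w^*$-compact as the continuous image of a compact set. For (ii): one computes $\bigcap_{y^*\in B_{Y^*}}\ker(T^*y^*)=\{x\in X:\ y^*(Tx)=0\ \text{for all}\ y^*\in B_{Y^*}\}=\ker T$ (the second equality by Hahn--Banach), so $T^*(B_{Y^*})$ is total exactly when $T$ is injective. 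For (iii): by the Orlicz--Pettis theorem, $T\circ\nu\in{\rm ca}(\Sigma,Y)$ if and only if $y^*\circ(T\circ\nu)\in{\rm ca}(\Sigma)$ for all $y^*\in Y^*$; since $y^*\circ(T\circ\nu)=(T^*y^*)\circ\nu$ and ${\rm ca}(\Sigma)$ is a linear space while $T^*(Y^*)=\bigcup_{n\in\N}n\,T^*(B_{Y^*})$, this amounts to $x^*\circ\nu\in{\rm ca}(\Sigma)$ for all $x^*\in T^*(B_{Y^*})$. For (iv): if $\nu\in{\rm ca}(\Sigma,X)$ then $T\circ\nu\in{\rm ca}(\Sigma,Y)$ by composing with the bounded operator~$T$, while the converse follows from (iii) together with the definition of the OT property.

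For (v) the crucial observation is that, by the duality between the range of an operator and the kernel of its adjoint, $T^{**}$ is injective if and only if $T^*\colon Y^*\to X^*$ has norm-dense range: indeed $\ker T^{**}$ is exactly the annihilator of $T^*(Y^*)$ in~$X^{**}$, which is trivial precisely when $T^*(Y^*)$ is norm-dense in~$X^*$. Assuming $T^{**}$ injective, note that $T^*(B_{Y^*})$ is balanced and convex with ${\rm span}(T^*(B_{Y^*}))=T^*(Y^*)$, whence $\overline{{\rm span}(T^*(B_{Y^*}))}^{\|\cdot\|}=\overline{T^*(Y^*)}^{\|\cdot\|}=X^*$. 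Now $X^*$ has the OT property --- this is just the Orlicz--Pettis theorem. Applying the equivalence (i)$\Leftrightarrow$(ii) of Proposition~\ref{proposition:OT-sequential} to the set ${\rm span}(T^*(B_{Y^*}))$ (whose norm closure is $X^*$) shows that ${\rm span}(T^*(B_{Y^*}))$ has the OT property, and then Proposition~\ref{proposition:convex} shows that $T^*(B_{Y^*})$ has the OT property, as required.

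Granted Propositions~\ref{proposition:convex} and~\ref{proposition:OT-sequential}, part~(v) is short, and the one point that really needs care --- the place I expect to be the ``obstacle'' --- is recognizing that ``$T^{**}$ injective'' should be read as ``$T^*$ has norm-dense range'', which is what collapses the problem to the case $W=X^*$. This reformulation also explains why the hypothesis cannot be weakened to ``$T$ injective'': by (ii) the latter is equivalent only to $T^*(B_{Y^*})$ being total, i.e.\ to $T^*(Y^*)$ being $w^*$-dense (but possibly not norm-dense) in~$X^*$, and by the Diestel--Faires Theorem~\ref{theorem:DF} a total subset of~$X^*$ can fail the OT property whenever $X$ contains an isomorphic copy of~$\ell_\infty$. (If one preferred to avoid Proposition~\ref{proposition:OT-sequential}, (v) could instead be proved directly: via the Dieudonn\'e--Grothendieck theorem and (iii) one reduces to showing $\nu(A_n)\to 0$ weakly for every $A_n\downarrow\emptyset$ in~$\Sigma$, and this holds because every $w^*$-cluster point of the bounded sequence $(\nu(A_n))_n$ in~$X^{**}$ is annihilated by the $w^*$-to-$w^*$ continuous injective map~$T^{**}$, since $T(\nu(A_n))\to 0$ in norm by countable additivity of $T\circ\nu$.)
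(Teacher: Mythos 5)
Your proof is correct and follows essentially the same route as the paper: parts (i)--(iv) by the standard compactness/duality/Orlicz--Pettis arguments, and part (v) by identifying injectivity of $T^{**}$ with norm-density of $T^*(Y^*)$ in~$X^*$ and then invoking Propositions~\ref{proposition:OT-sequential} and~\ref{proposition:convex}. The parenthetical direct argument for (v) via $w^*$-cluster points is a valid alternative, but the main line of reasoning coincides with the paper's.
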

\begin{proof} (i) follows from the $w^*$-compactness of~$B_{Y^*}$ and the $w^*$-to-$w^*$ continuity of~$T^*$. (ii) is immediate.
(iii) follows at once from the Orlicz-Pettis theorem. (iv) is a consequence of (iii).

To prove (v), note  that the injectivity of~$T^{**}$ is equivalent (via the Hahn-Banach separation theorem)
to the norm denseness  of $T^*(Y^*)$ in~$X^*$. From the Orlicz-Pettis theorem and Proposition~\ref{proposition:OT-sequential} it follows that
$T^*(Y^*)$ has the OT property. Since ${\rm span}(T^*(B_{Y^*}))= T^*(Y^*)$, we can apply Proposition~\ref{proposition:convex}
to conclude that $T^*(B_{Y^*})$ has the OT property.
\end{proof}

Typical examples of non-isomorphic embeddings having injective biadjoints are the operators associated to the 
Davis-Figiel-Johnson-Pe{\l}czy\'nski factorization (see, e.g., \cite[Theorem~5.37]{ali-bur}).
The following simple example shows that the conclusion of Proposition~\ref{proposition:injective}(iv) can fail 
for arbitrary injective operators. 

\begin{example}\label{example:injective}
\rm Let $\nu: \mathcal{P}(\N) \to \ell_\infty$ be the finitely additive measure
defined by $\nu(A):=\chi_A$ for all $A\sub \N$ and let $T:\ell_\infty \to \ell_1$ be the injective operator 
defined by $T((x_n)_{n\in \N}):=(2^{-n}x_n)_{n\in \N}$ for all $(x_n)_{n\in \N}\in \ell_\infty$. Observe that 
$T\circ\nu$ is finitely additive and
$$
	\|(T \circ \nu)(A)\|=\sum_{n\in A}2^{-n}
	\quad\text{for all $A \sub \N$}, 
$$
hence $T \circ \nu \in {\rm ca}(\mathcal{P}(\N),\ell_1)$. However, $\nu\not\in {\rm ca}(\mathcal{P}(\N),\ell_\infty)$.
\end{example}

Let $T: X \to Y$ be an injective operator between Banach spaces, let $(\Omega,\Sigma)$ be a measurable space
and suppose that the integration operator $I_\nu$ of $\nu \in {\rm ca}(\Sigma,Y)$ factors as
$$
	\xymatrix@R=2pc@C=2pc{
	L_1(\nu) \ar[rr]^{I_\nu} \ar[dr]_S &      & Y \\
	 & X \ar[ur]_{T} &
	}
$$
for some operator~$S$. Define $\tilde{\nu}(A):=S(\chi_A)$ for all $A\in \Sigma$.
In \cite[Theorem~3.7]{nyg-rod} it was proved that $\tilde{\nu}\in {\rm ca}(\Sigma,X)$ satisfies:
\begin{enumerate}
\item[(a)] $\nu=T\circ \tilde{\nu}$ and $\mathcal{N}(\nu)=\mathcal{N}(\tilde{\nu})$;
\item[(b)] $L_1(\nu)=L_1(\tilde{\nu})$ (with equivalent norms); and
\item[(c)] $S=I_{\tilde{\nu}}$.
\end{enumerate}
This result improves \cite[Lemma~3.1]{rod15} in which (b) was obtained via the Diestel-Faires Theorem~\ref{theorem:DF}
under the additional assumption that $X$ does not contain subspaces isomorphic to~$\ell_\infty$.
In a similar spirit, we have:

\begin{proposition}\label{proposition:Susumu}
Let $T: X \to Y$ be an operator between Banach spaces such that $T^*(B_{Y^*})$ has the OT property.
Let $(\Omega,\Sigma)$ be a measurable space and let $\nu \in {\rm ca}(\Sigma,Y)$ be such that
$I_\nu(L_1(\nu)) \sub T(X)$. Then there is $\tilde{\nu}\in {\rm ca}(\Sigma,X)$ such that:
\begin{enumerate}
\item[(a)] $\nu=T\circ \tilde{\nu}$  and $\mathcal{N}(\nu)=\mathcal{N}(\tilde{\nu})$;
\item[(b)] $L_1(\nu)=L_1(\tilde{\nu})$ (with equivalent norms); and
\item[(c)] $I_\nu=T \circ I_{\tilde{\nu}}$.
\end{enumerate}
\end{proposition}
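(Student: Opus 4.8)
The plan is to mimic the factorization argument of \cite[Theorem~3.7]{nyg-rod}, replacing the ad hoc closed-graph/Orlicz-Pettis reasoning there by a direct appeal to the OT property of $T^*(B_{Y^*})$, which is exactly what Proposition~\ref{proposition:injective}(iv) is tailored for. First I would fix a Rybakov control measure $\mu = |y^*\circ\nu|$ of~$\nu$ and recall that $L_1(\nu)$ is a Banach function space over $(\Omega,\Sigma,\mu)$, that $\chi_A\in L_1(\nu)$ for every $A\in\Sigma$, and that the simple functions are dense in $L_1(\nu)$ by order continuity. Since $I_\nu(L_1(\nu))\sub T(X)$, for each $A\in\Sigma$ there is at least one $x_A\in X$ with $T(x_A)=I_\nu(\chi_A)=\nu(A)$; the issue is to choose these $x_A$ coherently so that $A\mapsto x_A$ is countably additive and so that the induced map on simple functions extends to an operator $S\colon L_1(\nu)\to X$.

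The clean way to get the operator is to invoke the open mapping theorem: $T$ restricted to a suitable subspace, or rather the factorization $I_\nu = T\circ(\,\cdot\,)$, should be handled by noting that $I_\nu$ maps into the subspace $T(X)$, but $T(X)$ need not be closed, so instead I would argue as follows. Consider $T$ as an operator onto its range and use that $I_\nu\colon L_1(\nu)\to Y$ has range inside $T(X)$; by a standard lifting lemma (e.g.\ \cite[Theorem~5.37]{ali-bur}-style reasoning, or directly the fact that $I_\nu$ factors through $T$ because $T(X)\supseteq I_\nu(L_1(\nu))$ and $L_1(\nu)$ is a Banach space) there is an operator $S\colon L_1(\nu)\to X$ with $T\circ S = I_\nu$. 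Then set $\tilde\nu(A):=S(\chi_A)$ for all $A\in\Sigma$. By construction $T\circ\tilde\nu(A)=S$-composition$=I_\nu(\chi_A)=\nu(A)$, so $T\circ\tilde\nu=\nu$; in particular, for every $x^*\in T^*(B_{Y^*})$, say $x^*=T^*(y^*)$, we have $x^*\circ\tilde\nu = y^*\circ T\circ\tilde\nu = y^*\circ\nu\in{\rm ca}(\Sigma)$. Since $T^*(B_{Y^*})$ has the OT property, this forces $\tilde\nu\in{\rm ca}(\Sigma,X)$, giving (a) (the equality of null sets: $\mathcal{N}(\nu)\sub\mathcal{N}(\tilde\nu)$ is clear once $\tilde\nu$ is countably additive and $T$-injective is not even needed for one inclusion; the reverse uses that $T(\tilde\nu(B))=\nu(B)$, so $\tilde\nu(B)=0\Rightarrow\nu(B)=0$, hence $\mathcal{N}(\tilde\nu)\sub\mathcal{N}(\nu)$, and the forward inclusion uses injectivity of $T$ on the range — but since we only assume $T^*(B_{Y^*})$ is total, i.e. $T$ need not be injective, I should be careful and perhaps only claim $\mathcal{N}(\nu)=\mathcal{N}(\tilde\nu)$ when the OT hypothesis already implies totality of $T^*(B_{Y^*})$, which it does by Proposition~\ref{proposition:OTimpliestotal}; totality of $T^*(B_{Y^*})$ is equivalent to injectivity of $T$ by Proposition~\ref{proposition:injective}(ii), so $T$ is automatically injective here).

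For (b) and (c): since $T\circ\tilde\nu=\nu$, for every $y^*\in Y^*$ we get $|y^*\circ\nu| = |(T^*y^*)\circ\tilde\nu|$, so the seminorms controlling $L_1(\nu)$ dominate those of $L_1(\tilde\nu)$ in one direction, giving $L_1(\tilde\nu)\sub L_1(\nu)$ with $\|\cdot\|_{L_1(\nu)}\le \|\cdot\|_{L_1(\tilde\nu)}$ up to a constant; for the reverse, use that $B_{T^*(B_{Y^*})}$ is norming after possibly rescaling (or that $T$ is an isomorphism onto a dense-range factor in the relevant estimates) — actually the cleanest route is: a $\Sigma$-measurable $f$ is $\nu$-integrable iff $f\in L_1(|y^*\circ\nu|)$ for all $y^*$ and the indefinite integrals exist; since $\tilde\nu\in{\rm ca}(\Sigma,X)$ and $I_{\tilde\nu}$ is defined on simple functions agreeing with $S$, density of simple functions and continuity of $S$ identify $S=I_{\tilde\nu}$ on $L_1(\nu)$, which then forces $L_1(\nu)\sub L_1(\tilde\nu)$ with the matching norm estimate via $\|f\|_{L_1(\tilde\nu)}=\sup_{x^*\in B_{X^*}}\int|f|\,d|x^*\circ\tilde\nu|$ and the norming constant relating $B_{X^*}$ to $\overline{T^*(B_{Y^*})}^{w^*}$. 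Finally $I_\nu = T\circ S = T\circ I_{\tilde\nu}$ is (c), and combined with $S=I_{\tilde\nu}$ we are done.

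The main obstacle I anticipate is establishing the \emph{quantitative} equivalence of norms in (b), i.e.\ producing a constant $c>0$ with $\|f\|_{L_1(\nu)}\ge c\,\|f\|_{L_1(\tilde\nu)}$: this is where one genuinely needs more than totality of $T^*(B_{Y^*})$, since a priori $T^*(B_{Y^*})$ might be total but not norming, in which case $S$ exists as a linear map but may be unbounded and the lifting $I_\nu=T\circ S$ fails. In \cite[Theorem~3.7]{nyg-rod} the operator $S$ is given as a hypothesis, so that difficulty is bypassed; here I would instead extract boundedness of $S$ from the factorization $I_\nu=T\circ S$ via the open mapping theorem applied to $T\colon X\to \overline{T(X)}$ (whose range is the closed subspace $\overline{T(X)}\supseteq I_\nu(L_1(\nu))$ — note $I_\nu(L_1(\nu))$ need not be closed, so one argues on the closure and checks the lifted map still lands in $X$ using that $T$ is a quotient onto $\overline{T(X)}$ after factoring out $\ker T=\{0\}$). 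Sorting out this boundedness cleanly — and in particular checking that the constant produced controls both the existence of the $X$-valued integrals and the norm equivalence — is the technical heart of the proof; once $S$ is known to be a bounded operator with $T\circ S=I_\nu$, everything else (countable additivity of $\tilde\nu$ via the OT property, the null-set equality via injectivity of $T$, and (c)) follows by the routine arguments sketched above.
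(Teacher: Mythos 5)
There is a genuine gap, and it sits exactly where you yourself flag the ``technical heart'': the construction of a \emph{bounded} lifting $S$ with $T\circ S=I_\nu$. Your proposed fix --- apply the open mapping theorem to $T\colon X\to\overline{T(X)}$ --- does not work: an injective operator whose range is dense but not closed in $\overline{T(X)}$ is \emph{not} surjective onto $\overline{T(X)}$, hence not open onto it, and $T^{-1}\colon T(X)\to X$ is then unbounded. So the mere inclusion $I_\nu(L_1(\nu))\sub T(X)$ gives you a linear map $S=T^{-1}\circ I_\nu$ with no a priori continuity, and your subsequent steps (extending $S$ from simple functions by density, identifying $S=I_{\tilde\nu}$ by continuity, extracting a quantitative constant from a norming set) all lean on that missing boundedness. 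You correctly observe that $T^*(B_{Y^*})$ total need not be norming; the conclusion to draw is that this whole route is a dead end, not that it can be patched by a quotient argument.

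The statement is nevertheless provable, and the correct order of operations avoids $S$ entirely. Since the OT property implies totality of $T^*(B_{Y^*})$ (Proposition~\ref{proposition:OTimpliestotal}) and totality is equivalent to injectivity of $T$ (Proposition~\ref{proposition:injective}(ii)), one may \emph{define} $\tilde\nu(A)$ as the unique preimage of $\nu(A)=I_\nu(\chi_A)\in T(X)$; countable additivity of $\tilde\nu$ then follows from the OT property exactly as you say, and the inclusion $L_1(\tilde\nu)\sub L_1(\nu)$ together with $\mathcal{N}(\nu)=\mathcal{N}(\tilde\nu)$ and $I_\nu=T\circ I_{\tilde\nu}$ on $L_1(\tilde\nu)$ is standard (\cite[Lemma~3.27]{oka-alt}). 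The reverse inclusion $L_1(\nu)\sub L_1(\tilde\nu)$ --- which your sketch never actually establishes --- is proved for each fixed $f\in L_1(\nu)$ by lifting the indefinite integral $A\mapsto\int_A f\,d\nu$ through $T$ to a set function $\eta\colon\Sigma\to X$, which is again countably additive by Proposition~\ref{proposition:injective}(iv); one then truncates, $f_n:=f\chi_{|f|^{-1}([0,n])}$, verifies $\int_A f_n\,d\tilde\nu=\eta(A\cap A_n)$ via uniform approximation by simple functions and injectivity of $T$, and invokes Lewis's convergence theorem to conclude $f\in L_1(\tilde\nu)$. Finally, the norm equivalence in (b) requires no norming constant at all: once the set equality $L_1(\nu)=L_1(\tilde\nu)$ is known, the two identity inclusions between these Banach function spaces are continuous by the Closed Graph Theorem. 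Reorganizing your argument along these lines would close the gap.
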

\begin{proof}
Observe that $T$ is injective (by Propositions~\ref{proposition:OTimpliestotal} and~\ref{proposition:injective}(ii)).
For each $A\in \Sigma$ we have $\nu(A)=I_\nu(\chi_A)\in T(X)$, so there is a unique $\tilde{\nu}(A)\in X$ such that
$$
	T(\tilde{\nu}(A))=\nu(A).
$$
The so-defined map $\tilde\nu:\Sigma \to X$ satisfies $\nu=T\circ \tilde{\nu}$ and belongs to ${\rm ca}(\Sigma,X)$ because
$T^*(B_{Y^*})$ has the OT property and
$T^*(y^*)\circ \tilde{\nu}=y^* \circ T \circ \tilde{\nu}=y^*\circ \nu \in {\rm ca}(\Sigma)$ for all $y^*\in B_{Y^*}$.

By \cite[Lemma~3.27]{oka-alt} we have $\mathcal{N}(\nu)=\mathcal{N}(\tilde{\nu})$, the inclusion $L_1(\tilde{\nu})\subseteq L_1(\nu)$
and the equality $I_\nu=T \circ I_{\tilde{\nu}}$ on $L_1(\tilde{\nu})$.

To prove the reverse inclusion $L_1(\nu)\subseteq L_1(\tilde{\nu})$, let
$f \in  L_1(\nu)$. The fact that $I_\nu(L_1(\nu)) \sub T(X)$ enables us to define a finitely additive set function $\eta: \Sigma
\to X$ such that $T( \eta (A) )=I_\nu(f\chi_A)= \int_Af\,d\nu$ for every $A \in \Sigma$. Then, Proposition \ref{proposition:injective}(iv) ensures that  
$\eta$ is countably additive because $T^*(B_{Y^*})$ has the OT property and the indefinite integral $A\mapsto \int_Af\,d\nu$ on $\Sigma$ is countably additive.  
Given $n \in \N$, let $A_n: = |f|^{-1} ([0,n])\in \Sigma$ and $f_n:= f\chi_{A_n}$.  Fix $A \in\Sigma$. 
Each $f_n$ is bounded and $\Sigma$-measurable, hence it is $\tilde{\nu}$-integrable and, moreover, it satisfies
\begin{equation}\label{eqn:Susumu}
	\int_A f_n\,d\tilde{\nu} = \eta(A\cap A_n).
\end{equation}
Indeed, let $(s_k^{(n)})_{k\in \N}$ be a sequence of $\Sigma$-simple functions which are uniformly convergent to~$f_n$ as $k\to \infty$. 
Since $\nu=T\circ \tilde{\nu}$, it follows that
\begin{eqnarray*}
 	T\left(\int_A f_n\,d\tilde{\nu} \right) & = &  T\left(\lim_{k\to\infty}\int_A s_k^{(n)}\,d\tilde{\nu} \right)  =  
 	\lim_{k\to\infty}T\left(\int_A s_k^{(n)}\,d\tilde{\nu} \right) \\ & =&  \lim_{k\to\infty}\int_A s_k^{(n)}\,d\nu
  	=\int_A f_n\,d\nu= \int_{A\cap A_n} f \,d\nu = T\left(\eta (A\cap A_n) \right).
\end{eqnarray*}
This verifies \eqref{eqn:Susumu} as $T$ is injective.  Now \eqref{eqn:Susumu} together with countable additivity of~$\eta$ imply that $ \lim_{n\to \infty} \int_A f_n\,d\tilde{\nu} =  \lim_{n\to \infty}\eta(A \cap A_n) =\eta(A)$ (as $(A_n)_{n\in \N}$ is increasing with union~$\Omega$).  
Since this holds for an arbitrarily fixed $A \in \Sigma$ and since $\lim_{n\to\infty}f_n = f$ pointwise on $\Omega$, it follows from a result by Lewis
(see, e.g., \cite[Theorem 3.5]{oka-alt}) that $ f \in L^1(\tilde{\nu})$.  Therefore we have proved $L_1(\nu)\subseteq L_1(\tilde{\nu})$ and hence, (c) holds.
The Closed Graph Theorem can be used to show
that both inclusions $L_1(\nu)\subseteq L_1(\tilde{\nu})$ and $L_1(\tilde{\nu})\subseteq L_1(\nu)$ are continuous, so that the norms 
of $L_1(\nu)$ and~$L_1(\tilde{\nu})$ are equivalent.
\end{proof}

We finish this subsection with two results showing that the study of countable additivity of vector measures in arbitrary Banach spaces 
can be reduced somehow to the $\ell_\infty$-valued case.

\begin{proposition}\label{proposition:tilde}
Let $W \sub B_{X^*}$ and let $i_W: X \to \ell_\infty(W)$ be the operator defined by
$$
	i_W(x):=\big(x^*(x)\big)_{x^* \in W}
	\quad \text{for all $x\in X$}.
$$ 
Let $\nu:\Sigma \to X$ be a map defined on a $\sigma$-algebra~$\Sigma$ and define $\widehat \nu_W:=i_W\circ \nu$. 
Let us consider the following statements:
\begin{enumerate}
\item[(i)] $\nu \in {\rm ca}(\Sigma,X)$.
\item[(ii)] $\widehat{\nu}_W \in {\rm ca}(\Sigma,\ell_\infty(W))$.
\item[(iii)] $\varphi \circ \widehat \nu_W\in {\rm ca}(\Sigma)$ for every $\varphi \in \ell_1(W) \sub \ell_\infty(W)^*$.
\item[(iv)] $x^*\circ \nu\in {\rm ca}(\Sigma)$ for every $x^* \in W$.
\end{enumerate}
Then (i)$\impli$(ii)$\impli$(iii)$\Leftrightarrow$(iv). Moreover:
\begin{enumerate}
\item[(a)] If $W$ is $w^*$-compact, then (ii)$\Leftrightarrow$(iii)$\Leftrightarrow$(iv).
\item[(b)] If $i_W^{**}$ is injective, then (i)$\Leftrightarrow$(ii).
\end{enumerate}
\end{proposition}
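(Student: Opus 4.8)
The plan is to obtain the chain of implications using only the stability of countable additivity under composition with bounded operators and bounded functionals, and then to read off the refinements (a) and (b) from the machinery already developed in this section. Note first that $i_W$ is a contraction, since $\|i_W(x)\|_{\ell_\infty(W)}=\sup_{x^*\in W}|x^*(x)|\le\|x\|$ as $W\sub B_{X^*}$, and that $\ell_1(W)$ embeds isometrically into $\ell_\infty(W)^*$ via the pairing $\langle\varphi,z\rangle=\sum_{x^*\in W}\varphi(x^*)\,z(x^*)$. Then (i)$\impli$(ii) and (ii)$\impli$(iii) are immediate, as the composition of a countably additive vector measure with a bounded operator (resp.\ with a bounded functional) is countably additive. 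For (iii)$\impli$(iv) I would test against the unit vectors $e_{x^*}\in\ell_1(W)$, $x^*\in W$: since $e_{x^*}\circ i_W=x^*$ as functionals on~$X$, we have $e_{x^*}\circ\widehat{\nu}_W=x^*\circ\nu$, so (iii) for $\varphi=e_{x^*}$ is precisely (iv).

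The substantive implication is (iv)$\impli$(iii). The family $\{e_{x^*}:x^*\in W\}$ is $1$-norming on $\ell_\infty(W)$ since $\|z\|_{\ell_\infty(W)}=\sup_{x^*\in W}|e_{x^*}(z)|$; in particular it is total. As $e_{x^*}\circ\widehat{\nu}_W=x^*\circ\nu$ is bounded and finitely additive for each $x^*\in W$, the Dieudonn\'{e}--Grothendieck theorem recalled in the introduction shows that $\widehat{\nu}_W$ itself is bounded and finitely additive. Setting $M:=\sup_{A\in\Sigma}\|\widehat{\nu}_W(A)\|<\infty$, for any $\varphi\in\ell_\infty(W)^*$ and any finite $\Sigma$-partition $\{A_i\}$ of~$\Omega$, letting $A^{+}$ (resp.\ $A^{-}$) be the union of the $A_i$ with $\varphi(\widehat{\nu}_W(A_i))\ge 0$ (resp.\ $<0$) and using finite additivity we get $\sum_i|\varphi(\widehat{\nu}_W(A_i))|=\varphi\big(\widehat{\nu}_W(A^{+})-\widehat{\nu}_W(A^{-})\big)\le 2\|\varphi\|\,M$. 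Thus $\varphi\mapsto\varphi\circ\widehat{\nu}_W$ is a bounded linear map from $\ell_\infty(W)^*$ into the Banach space $ba(\Sigma)$ of bounded finitely additive scalar measures on~$\Sigma$ with the variation norm, and ${\rm ca}(\Sigma)$ is a norm-closed subspace of $ba(\Sigma)$. Its preimage of ${\rm ca}(\Sigma)$ is therefore norm-closed and contains every finitely supported $\varphi\in\ell_1(W)$ (for such $\varphi$, $\varphi\circ\widehat{\nu}_W$ is a finite linear combination of the countably additive measures $x^*\circ\nu$); since finitely supported elements are norm-dense in $\ell_1(W)$, we conclude $\varphi\circ\widehat{\nu}_W\in{\rm ca}(\Sigma)$ for all $\varphi\in\ell_1(W)$, which is (iii).

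For part~(a), assume $(W,w^*)$ is compact (the case $W=\emptyset$ being trivial). Then $x\mapsto(x^*(x))_{x^*\in W}$ is $w^*$-continuous on~$W$, so $i_W$ takes values in the norm-closed subspace $C(W)\sub\ell_\infty(W)$ and $\widehat{\nu}_W$ may be viewed as a $C(W)$-valued map. By compactness each $f\in C(W)$ attains $\|f\|=\sup_{x^*\in W}|f(x^*)|$ at some point of~$W$, so the set $\{\pm\delta_{x^*}:x^*\in W\}\sub B_{C(W)^*}$ of point evaluations and their negatives is a James boundary of~$C(W)$ and hence, by the result quoted in the introduction, has the OT property. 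Since $\delta_{x^*}\circ\widehat{\nu}_W=x^*\circ\nu$, statement (iv) gives $\pm\delta_{x^*}\circ\widehat{\nu}_W\in{\rm ca}(\Sigma)$ for all $x^*\in W$, so the OT property yields $\widehat{\nu}_W\in{\rm ca}(\Sigma,C(W))\sub{\rm ca}(\Sigma,\ell_\infty(W))$, i.e.\ (ii); together with the already proved (ii)$\impli$(iii)$\Leftrightarrow$(iv) this gives (ii)$\Leftrightarrow$(iii)$\Leftrightarrow$(iv). For part~(b), I would apply Proposition~\ref{proposition:injective}(v) with $T=i_W$ and $Y=\ell_\infty(W)$: injectivity of $i_W^{**}$ makes $i_W^*(B_{\ell_\infty(W)^*})$ have the OT property, and then Proposition~\ref{proposition:injective}(iv) gives $\nu\in{\rm ca}(\Sigma,X)$ if and only if $i_W\circ\nu=\widehat{\nu}_W\in{\rm ca}(\Sigma,\ell_\infty(W))$, that is, (i)$\Leftrightarrow$(ii).

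The only delicate point is the implication (iv)$\impli$(iii): the hypothesis gives merely \emph{pointwise} (in $x^*\in W$) boundedness of the measures $x^*\circ\nu$, and the Dieudonn\'{e}--Grothendieck theorem, fed by the norming property of the coordinate functionals, is exactly what upgrades this to boundedness of $\widehat{\nu}_W$; afterwards the density argument in $\ell_1(W)$ is routine. Parts (a) and (b) are then short deductions from, respectively, the James-boundary form of the OT property and the injective-biadjoint result, both established earlier in this section.
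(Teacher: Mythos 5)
Your proof is correct and, apart from one step, follows the paper's route exactly: (i)$\impli$(ii)$\impli$(iii) are immediate, (iii)$\impli$(iv) is the observation $e_{x^*}\circ\widehat{\nu}_W=x^*\circ\nu$, part~(a) is the James-boundary argument with $\{\pm e_{x^*}|_{C(W)}\}$, and part~(b) is Proposition~\ref{proposition:injective}(iv)--(v). The one place where you diverge is (iv)$\impli$(iii). The paper disposes of it in one line with the Vitali--Hahn--Saks--Nikod\'{y}m theorem: writing $\varphi\in\ell_1(W)$ as a norm limit of finitely supported $\varphi_n$, the measures $\varphi_n\circ\widehat{\nu}_W$ are countably additive and converge setwise to $\varphi\circ\widehat{\nu}_W$ (for each fixed $A$, $\widehat{\nu}_W(A)$ is a single element of $\ell_\infty(W)$, so no uniform bound on $\widehat{\nu}_W$ is needed), whence VHSN gives countable additivity of the limit. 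You instead invoke Dieudonn\'{e}--Grothendieck against the $1$-norming family $\{e_{x^*}\}$ to show $\widehat{\nu}_W$ is bounded and finitely additive, deduce that $\varphi\mapsto\varphi\circ\widehat{\nu}_W$ is a bounded linear map into ${\rm ba}(\Sigma)$, and conclude via the norm-closedness of ${\rm ca}(\Sigma)$ in ${\rm ba}(\Sigma)$ together with the density of finitely supported vectors in $\ell_1(W)$. Both arguments are sound; yours trades VHSN for Dieudonn\'{e}--Grothendieck plus an elementary closedness argument, at the cost of first establishing global boundedness of $\widehat{\nu}_W$ --- a step the VHSN route skips entirely. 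Everything else matches the paper.
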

\begin{proof}
The implications (i)$\impli$(ii)$\impli$(iii) are immediate. 

To prove (iii)$\Leftrightarrow$(iv), observe first that
for each $x^*\in W$ we have $x^*\circ \nu=e_{x^*}\circ \widehat \nu_W$, where $e_{x^*}\in \ell_1(W)$ 
is the vector defined by $e_{x^*}(z^*)=0$ for all $z^*\in W\setminus \{x^*\}$
and $e_{x^*}(x^*)=1$. Since $\ell_1(W)=\overline{{\rm span}(\{e_{x^*}:x^*\in W\})}^{\|\cdot\|}\sub \ell_\infty(W)^*$, the equivalence 
(iii)$\Leftrightarrow$(iv) is a consequence of the Vitali-Hahn-Saks-Nikod\'{y}m theorem (see, e.g., \cite[p.~24, Corollary~10]{die-uhl-J}).

(a) If $W$ is $w^*$-compact, then $i_W$ takes values in the subspace $C(W) \sub \ell_\infty(W)$ of all $w^*$-continuous real-valued functions on~$W$.
The set 
$$
	\Gamma:=\{\pm e_{x^*}|_{C(W)}: \, x^*\in W\} \sub B_{C(W)^*}
$$ 
is a James boundary of~$C(W)$ and so it has the OT property, as we already mentioned in the Introduction. 
Finally, observe that (iv) is equivalent to saying that $\gamma \circ \widehat \nu_W \in {\rm ca}(\Sigma)$ for every $\gamma \in \Gamma$.

(b) This follows at once from Proposition~\ref{proposition:injective}(iv-v).
\end{proof}

Observe that if $W \sub B_{X^*}$ is norming, then the operator $i_W$ of Proposition~\ref{proposition:tilde}
is an isomorphic embedding.

\begin{proposition}\label{proposition:diagram}
Let $W \sub B_{X^*}$ be a norming set and let 
$\nu:\Sigma \to X$ be a map defined on a $\sigma$-algebra~$\Sigma$ such that
$x^*\circ \nu\in {\rm ca}(\Sigma)$ for every $x^* \in W$. If $\nu \not\in {\rm ca}(\Sigma,X)$,
then there is a countable set $W_0 \sub W$ such that $\widehat{\nu}_{W_0}\not\in {\rm ca}(\Sigma,\ell_\infty(W_0))$, and we have a commutative diagram
$$
	\xymatrix@R=2.5pc@C=2.5pc{
	&      & \ell_\infty(W) \ar[dd]^{P_{W_0}} \\
	\Sigma \ar[r]^{\nu} \ar[urr]^{\widehat\nu_W} \ar[drr]_{\widehat\nu_{W_0}} & X \ar[dr]^{i_{W_0}} \ar[ur]_{i_{W}} &  \\
	 &     & \ell_\infty(W_0) \\
	}
$$
where $P_{W_0}$ is the operator defined by $P_{W_0}(u):=u|_{W_0}$ for all $u\in \ell_\infty(W)$. 
\end{proposition}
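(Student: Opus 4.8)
The plan is to separate a purely formal ``diagram'' part from the single substantive step, which is the extraction of countably many functionals from~$W$ that already detect the failure of countable additivity of~$\nu$.

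First I would record the commutativity of the diagram, which holds for \emph{any} choice of $W_0\subseteq W$ and requires no hypothesis beyond $W\subseteq B_{X^*}$. Since $W,W_0\subseteq B_{X^*}$, the maps $i_W$ and $i_{W_0}$ take values in $\ell_\infty(W)$ and $\ell_\infty(W_0)$ respectively and are operators of norm at most~$1$, and $P_{W_0}$ is clearly an operator of norm at most~$1$. For every $x\in X$ one has $P_{W_0}(i_W(x))=\big(x^*(x)\big)_{x^*\in W_0}=i_{W_0}(x)$, hence $P_{W_0}\circ i_W=i_{W_0}$; composing on the right with~$\nu$ and recalling $\widehat\nu_W=i_W\circ\nu$ and $\widehat\nu_{W_0}=i_{W_0}\circ\nu$ gives $P_{W_0}\circ\widehat\nu_W=\widehat\nu_{W_0}$. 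Thus the diagram commutes once $W_0$ has been produced.

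Next I would reduce the hypothesis $\nu\notin{\rm ca}(\Sigma,X)$ to a single decreasing sequence. Since $W$ is norming it is total, and each $x^*\circ\nu$ with $x^*\in W$ is countably additive, hence finitely additive; arguing as in the proof of Proposition~\ref{proposition:OT-operator}, totality of~$W$ forces $\nu$ to be finitely additive. Now I would invoke the well-known fact that a finitely additive $X$-valued measure on a $\sigma$-algebra is countably additive if and only if it is continuous at~$\emptyset$, i.e. $\nu(B_k)\to0$ in norm for every decreasing sequence $(B_k)$ in~$\Sigma$ with $\bigcap_k B_k=\emptyset$ (the nontrivial implication follows by applying continuity at~$\emptyset$ to $B_k:=\bigcup_{n>k}A_n$ for an arbitrary disjoint sequence~$(A_n)$). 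Therefore there is a decreasing sequence $(C_k)$ in~$\Sigma$ with $\bigcap_k C_k=\emptyset$ and $\nu(C_k)\not\to0$; after passing to a suitable subsequence we obtain $\epsilon>0$ and a decreasing sequence $(B_k)$ in~$\Sigma$ with $\bigcap_k B_k=\emptyset$ and $\|\nu(B_k)\|\ge\epsilon$ for all~$k$.

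Finally I would localize. Let $c>0$ be a norming constant for~$W$. For each~$k$ choose $x_k^*\in W$ with $|x_k^*(\nu(B_k))|>\epsilon/(2c)$ and set $W_0:=\{x_k^*:k\in\N\}\subseteq W$, a countable set. Then $\widehat\nu_{W_0}=i_{W_0}\circ\nu$ is finitely additive and, for every~$k$,
$$
\big\|\widehat\nu_{W_0}(B_k)\big\|_{\ell_\infty(W_0)}=\sup_{x^*\in W_0}|x^*(\nu(B_k))|\ge |x_k^*(\nu(B_k))|>\frac{\epsilon}{2c}.
$$
Since $(B_k)$ is decreasing with $\bigcap_k B_k=\emptyset$, this shows $\widehat\nu_{W_0}$ is not continuous at~$\emptyset$, so $\widehat\nu_{W_0}\notin{\rm ca}(\Sigma,\ell_\infty(W_0))$, which completes the proof. (One may note in passing that, since $x^*\circ\nu$ is countably additive for each $x^*\in W$ and $(B_k)$ decreases to~$\emptyset$, a fixed $x^*\in W$ can serve as~$x_k^*$ for only finitely many~$k$, so in fact $W_0$ is infinite; this refinement is not needed.) The only genuinely delicate point is the reduction in the previous paragraph: the observation that failure of countable additivity is already witnessed along one decreasing sequence is precisely what permits choosing \emph{countably many} functionals; everything else is routine bookkeeping.
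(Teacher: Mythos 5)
Your proof is correct and follows essentially the same route as the paper's: extract a sequence $(B_k)$ in~$\Sigma$ witnessing the failure of countable additivity (the paper uses the tails $B_n=\bigcup_{i>n}A_i$ of a disjoint sequence, you use a decreasing sequence with empty intersection, which is equivalent), then use the norming constant to pick one functional per~$k$ and let $W_0$ be the resulting countable set. The extra details you supply (finite additivity of~$\nu$ from totality, the commutativity $P_{W_0}\circ i_W=i_{W_0}$) are exactly the steps the paper leaves as "observe" and "immediate".
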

\begin{proof} Observe that $\nu$ is finitely additive. Since $\nu$ is not countably additive, we can take 
a sequence $(A_i)_{i\in \N}$ of pairwise disjoint elements of~$\Sigma$ such that the sequence 
$(\nu(\bigcup_{i>n}A_i))_{n\in \N}$ does not converge to~$0$. Write
$B_n:=\bigcup_{i>n}A_i$ for all $n\in \N$. Since $W$ is norming, there is a constant $k>0$ such that
for each $n\in \N$ there is $x_n^* \in W$ such that 
$|x_n^*(\nu(B_n))| \geq k \|\nu(B_n)\|_X$. Let $W_0:=\{x_n^*:n\in \N\}$. Then
$$
	\|\widehat \nu_{W_0} (B_n)\|_{\ell_\infty(W_0)} \ge |x_n^*(\nu(B_n))| \geq  k \|\nu(B_n)\|_X
	\quad
	\text{for all $n\in \N$},
$$
hence the sequence $(\widehat\nu_{W_0}(B_n))_{n\in \N}$ does not converge to~$0$ in~$\ell_\infty(W_0)$. It follows
that $\widehat{\nu}_{W_0}\not\in {\rm ca}(\Sigma,\ell_\infty(W_0))$. The last statement is immediate.
\end{proof}

\subsection{The OT property in~$\ell_\infty$}\label{subsection:ell_infty}

As noted in the Introduction, the finitely additive map $\nu:\mathcal{P}(\N)\to \ell_\infty$
defined by $\nu(A):=\chi_A$ for $A \sub \N$ is not countably additive while $\pi_n \circ\nu\in {\rm ca}(\mathcal{P}(\N))$
for each coordinate functional $\pi_n\in \ell_\infty^*$. This example has been used to see that the set $\{\pi_n:n\in \N\}\sub \ell_\infty^*$
fails to have the OT property. To provide further examples of the same nature, we shall first determine the form of general $\ell_\infty$-valued
countably additive measures in Proposition~\ref{proposition:S} below. The proof uses a couple of results which shall also be needed later. The first
one goes back to Bartle, Dunford and Schwartz~\cite{bar-alt} (cf. \cite[p.~14, Corollary~7]{die-uhl-J}) while
the second one is due to Rosenthal~\cite{ros-J-4} (cf. \cite[p.~252, Theorem~13]{die-uhl-J}).

\begin{theorem}[Bartle-Dunford-Schwartz]\label{theorem:BDS}
Let $\Sigma$ be a $\sigma$-algebra, let $X$ be a Banach space and let $\nu \in {\rm ca}(\Sigma,X)$. Then the range of~$\nu$, that is, the set
$$
	\mathcal{R}(\nu):=\{\nu(A): \, A\in \Sigma\}
$$
is relatively weakly compact in~$X$.
\end{theorem}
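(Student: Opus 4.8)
The plan is to prove this by the classical route: produce a control measure, realize the range of $\nu$ as part of the image of the unit ball under a suitable integration operator defined on an $L_\infty$-space, and show that this operator is weakly compact by passing to its adjoint and invoking the Dunford--Pettis and Gantmacher theorems. All the external ingredients needed are standard, and two of them (the existence of control measures and the weak compactness criterion in ${\rm ca}(\Sigma)$) have already been recalled above.

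First I would fix a Rybakov control measure $\mu=|x_0^*\circ\nu|$ of~$\nu$. Besides $\mathcal{N}(\mu)=\mathcal{N}(\nu)$, the property I really use is that $\nu$ is $\mu$-continuous in the $\epsilon$--$\delta$ sense, i.e. $\lim_{\mu(A)\to 0}\|\nu(A)\|_X=0$; this is part of the standard theory of control measures (see, e.g., \cite[p.~14, Corollary~6]{die-uhl-J}) and is precisely the ingredient used in the proof of Proposition~\ref{proposition:OT-operator}(i). Since $\nu$ is bounded it has finite semivariation $\|\nu\|(\Omega):=\sup_{x^*\in B_{X^*}}|x^*\circ\nu|(\Omega)<\infty$, so the assignment $T\varphi:=\int_\Omega\varphi\,d\nu$ (the integral of a bounded $\Sigma$-measurable function with respect to~$\nu$, obtained by uniform approximation by simple functions) yields a well-defined operator $T:L_\infty(\mu)\to X$ with $\|T\|\le\|\nu\|(\Omega)$; well-definedness on $\mu$-equivalence classes uses $\mathcal{N}(\mu)=\mathcal{N}(\nu)$. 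As $T(\chi_A)=\nu(A)$ for all $A\in\Sigma$, we get $\mathcal{R}(\nu)\sub T(B_{L_\infty(\mu)})$, so it suffices to prove that $T$ is weakly compact.

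To that end I would pass to the adjoint $T^*:X^*\to L_\infty(\mu)^*$. For $x^*\in X^*$ the scalar measure $x^*\circ\nu$ is countably additive and $\mu$-continuous in the $\epsilon$--$\delta$ sense, so the Radon--Nikod\'{y}m theorem furnishes $g_{x^*}:=d(x^*\circ\nu)/d\mu\in L_1(\mu)$; computing on simple functions and passing to the limit gives $\langle T^*x^*,\varphi\rangle=\int_\Omega\varphi\,d(x^*\circ\nu)=\int_\Omega\varphi g_{x^*}\,d\mu$ for every $\varphi\in L_\infty(\mu)$, so that $T^*x^*$ is the image of $g_{x^*}$ under the canonical (bounded) inclusion $L_1(\mu)\hookrightarrow L_\infty(\mu)^*$. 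Now $H:=\{x^*\circ\nu:x^*\in B_{X^*}\}\sub{\rm ca}(\Sigma)$ is bounded (by $\|\nu\|(\Omega)$) and uniformly $\mu$-continuous (this is exactly the $\epsilon$--$\delta$ $\mu$-continuity of~$\nu$, since $\sup_{x^*\in B_{X^*}}|x^*(\nu(A))|=\|\nu(A)\|_X$), hence relatively weakly compact in~${\rm ca}(\Sigma)$ by the criterion recalled just before Proposition~\ref{proposition:OT-operator}. Identifying $H$ with $\{g_{x^*}:x^*\in B_{X^*}\}$ through the isometry $\lambda\mapsto d\lambda/d\mu$ from the $\mu$-continuous part of~${\rm ca}(\Sigma)$ onto~$L_1(\mu)$ (equivalently, by the Dunford--Pettis theorem, $\{g_{x^*}\}$ being bounded and uniformly integrable because $\int_A|g_{x^*}|\,d\mu=|x^*\circ\nu|(A)$ tends to $0$ uniformly as $\mu(A)\to 0$), we conclude that $T^*(B_{X^*})$ is relatively weakly compact in~$L_1(\mu)$, hence in~$L_\infty(\mu)^*$. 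Thus $T^*$ is weakly compact, so $T$ is weakly compact by Gantmacher's theorem, and therefore $\mathcal{R}(\nu)\sub T(B_{L_\infty(\mu)})$ is relatively weakly compact in~$X$.

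The argument is entirely classical, so there is no genuine obstacle; the only points that deserve care are the well-definedness and boundedness of the integration operator $T$ on $L_\infty(\mu)$, the explicit identification of $T^*x^*$ with the Radon--Nikod\'{y}m derivative $d(x^*\circ\nu)/d\mu$ inside $L_\infty(\mu)^*$, and the passage from $\epsilon$--$\delta$ $\mu$-continuity of~$\nu$ to the relative weak compactness of $\{g_{x^*}:x^*\in B_{X^*}\}$ in~$L_1(\mu)$. (One can also avoid $L_\infty(\mu)$ entirely: given a sequence $(A_n)$ in~$\Sigma$, restrict everything to the countably generated sub-$\sigma$-algebra it generates, where $L_1$ of the restricted measure is separable, and then extract a weakly convergent subsequence of $(\nu(A_n))$ and invoke the Eberlein--\v{S}mulian theorem; but the route above seems shortest.)
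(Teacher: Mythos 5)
Your proof is correct: it is the classical Bartle--Dunford--Schwartz argument (control measure, integration operator on $L_\infty(\mu)$, identification of the adjoint's range with a uniformly integrable subset of $L_1(\mu)$, then Dunford--Pettis and Gantmacher), and all the steps you flag as needing care do go through. The paper does not prove this statement at all --- it only cites \cite{bar-alt} and \cite[p.~14, Corollary~7]{die-uhl-J} --- and your argument is essentially the proof found in those sources; the only cosmetic remark is that you do not need the Rybakov form of the control measure, since any control measure with $\lim_{\mu(A)\to 0}\|\nu(A)\|_X=0$ suffices.
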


\begin{theorem}[Rosenthal]\label{theorem:Rosenthal-separable}
Any weakly compact subset of $\ell_\infty$ is norm-separable.
\end{theorem}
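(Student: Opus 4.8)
\noindent\emph{Proof proposal.} The plan is to show more than separability: that the weak topology on any weakly compact $K\sub\ell_\infty$ is already metrizable, from which norm-separability follows by a soft argument. The guiding observation is that on~$\ell_\infty$ the weak topology, when restricted to a bounded set, is controlled by the much coarser topology of coordinatewise convergence, which is metrizable.

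First I would note that each coordinate functional $\pi_n\in\ell_\infty^*$ is weakly continuous, so the topology $\tau_p$ on~$\ell_\infty$ of pointwise convergence on~$\N$ (the initial topology generated by $\{\pi_n:n\in\N\}$) is coarser than the weak topology~$w$; moreover $\tau_p$ is Hausdorff, since the $\pi_n$ separate the points of~$\ell_\infty$. Now let $K\sub\ell_\infty$ be weakly compact. Being weakly compact, $K$ is norm-bounded, say $K\sub c\,B_{\ell_\infty}$. Since the identity map $(K,w)\to(K,\tau_p)$ is then a continuous bijection from a compact space onto a Hausdorff space, it is a homeomorphism, so $w$ and $\tau_p$ coincide on~$K$. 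On the other hand, the map $x\mapsto(x(n))_{n\in\N}$ is a homeomorphism from $(K,\tau_p)$ onto a subspace of the compact metrizable space $[-c,c]^{\N}$. Hence $(K,\tau_p)$, and therefore $(K,w)$, is a compact metrizable space.

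Being a compact metrizable space, $(K,w)$ is separable, so there is a countable set $D\sub K$ that is weakly dense in~$K$. Let $Z:=\overline{{\rm span}(D)}^{\|\cdot\|}$, a separable closed subspace of~$\ell_\infty$. Since $Z$ is convex and norm-closed, it is weakly closed, and therefore $K\sub\overline{D}^{w}\sub Z$. Thus $K$ is a subset of the separable metric space~$Z$, and subsets of separable metric spaces are separable, so $K$ is norm-separable.

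I do not expect a genuine obstacle: every step above is soft. The two points deserving a word of care are that weak compactness of~$K$ is exactly what upgrades ``$\tau_p$ coarser than $w$ on~$\ell_\infty$'' to ``$\tau_p=w$ on~$K$'', and that the passage from a countable weakly dense subset of~$K$ to norm-separability of~$K$ goes through Mazur's theorem on weak closures of convex sets.
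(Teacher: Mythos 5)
Your proof is correct. The paper does not actually prove this statement --- it is quoted from Rosenthal's paper \cite{ros-J-4} (cf.\ Diestel--Uhl) --- so what you have supplied is a self-contained elementary argument, and it is worth saying why it works and what the citation buys beyond it. Every step checks out: the coordinate functionals $\{\pi_n:n\in\N\}$ form a countable \emph{total} subset of $\ell_\infty^*$, so the topology $\tau_p$ of coordinatewise convergence is Hausdorff and coarser than $w$; on the weakly compact (hence norm-bounded) set $K$ the compact-to-Hausdorff comparison forces $w=\tau_p$, and $(K,\tau_p)$ embeds in the metrizable cube $[-c,c]^{\N}$, so $(K,w)$ is compact metrizable and therefore weakly separable; finally, a countable weakly dense $D\sub K$ satisfies $K\sub\overline{D}^{w}\sub\overline{{\rm span}(D)}^{\|\cdot\|}$ by Mazur's theorem, and that last space is norm-separable. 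Your argument is really the general fact that in any Banach space whose dual is $w^*$-separable, weakly compact sets are weakly metrizable and hence norm-separable; it applies to $\ell_\infty$ precisely because the coordinate functionals are countable and total. The genuinely hard content of Rosenthal's theorem in \cite{ros-J-4} concerns spaces such as $L_\infty(\mu)$ for a non-separable finite measure $\mu$ (isomorphic to $\ell_\infty$ as a Banach space only in the separable case), or general injective spaces, where no countable total set of functionals is available and the soft metrizability argument breaks down; for the literal statement about $\ell_\infty$ quoted here, your elementary route suffices and is arguably preferable.
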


\begin{proposition}\label{proposition:S}
Let $\nu:\Sigma \to \ell_\infty$ be a map defined on a $\sigma$-algebra~$\Sigma$ such that $\{\pi_n\circ \nu:n\in \N\} \sub {\rm ca}(\Sigma)$.
The following conditions are equivalent:
\begin{enumerate}
\item[(i)] $\nu \in {\rm ca}(\Sigma,\ell_\infty)$.
\item[(ii)] $\{\nu(A): A\in \Sigma\}$ is relatively weakly compact in~$\ell_\infty$.
\item[(iii)] $\{\nu(A): A\in \Sigma\}$ is norm-separable in~$\ell_\infty$.
\item[(iv)] $\{\pi_n\circ \nu:n\in \N\}$ is a uniformly countably additive subset of~${\rm ca}(\Sigma)$.
\item[(v)] $\{\pi_n\circ \nu:n\in \N\}$ is relatively weakly compact in~${\rm ca}(\Sigma)$.
\item[(vi)] There exists a non-negative $\mu\in {\rm ca}(\Sigma)$ such that $\{\pi_n\circ \nu:n\in \N\}$
is uniformly $\mu$-continuous.
\end{enumerate}
\end{proposition}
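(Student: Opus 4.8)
The plan is to establish the equivalences by a cycle of implications, exploiting the fact that several of them are nearly immediate from results already available. First I would observe that (i)$\impli$(ii) is exactly the Bartle--Dunford--Schwartz Theorem~\ref{theorem:BDS}, and (ii)$\impli$(iii) is Rosenthal's Theorem~\ref{theorem:Rosenthal-separable}. For (iv)$\Leftrightarrow$(v)$\Leftrightarrow$(vi), I would invoke the characterization of relative weak compactness in ${\rm ca}(\Sigma)$ recalled just before Proposition~\ref{proposition:OT-operator} (i.e., \cite[p.~92, Theorem~13]{die-J}): a bounded subset of ${\rm ca}(\Sigma)$ is relatively weakly compact if and only if it is uniformly $\mu_0$-continuous for some non-negative $\mu_0\in{\rm ca}(\Sigma)$; note that $\{\pi_n\circ\nu:n\in\N\}$ is automatically bounded once each $\pi_n\circ\nu$ is countably additive and $\nu$ takes values in $\ell_\infty$, since $\sup_n\|\pi_n\circ\nu\| = \sup_n |\pi_n\circ\nu|(\Omega)$ is controlled by the total variation, which one checks is finite using the uniform boundedness of the coordinates of $\nu(A)$ together with a gliding-hump/Nikod\'ym-type argument. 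The equivalence of (iv) (uniform countable additivity) with (vi) (uniform $\mu$-continuity) is the classical fact that a uniformly countably additive bounded family admits a non-negative control measure (see, e.g., \cite[p.~14, Corollary~6]{die-uhl-J}), while uniform $\mu$-continuity trivially implies uniform countable additivity.

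The substantive links are (i)$\impli$(iv) and, conversely, (iv)$\impli$(i) (or (vi)$\impli$(i)); these will carry the weight of the argument. For (i)$\impli$(iv): if $\nu\in{\rm ca}(\Sigma,\ell_\infty)$, then by \cite[p.~14, Corollary~6]{die-uhl-J} there is a non-negative $\mu_0\in{\rm ca}(\Sigma)$ such that $\nu$ is $\mu_0$-continuous in the sense that $\|\nu(A)\|_{\ell_\infty}\to 0$ as $\mu_0(A)\to 0$; since $|\pi_n(\nu(A))|\le\|\nu(A)\|_{\ell_\infty}$ uniformly in $n$, the family $\{\pi_n\circ\nu:n\in\N\}$ is uniformly $\mu_0$-continuous, hence uniformly countably additive. (This simultaneously gives (i)$\impli$(vi).) For the reverse direction (vi)$\impli$(i): given a non-negative $\mu\in{\rm ca}(\Sigma)$ with $\{\pi_n\circ\nu:n\in\N\}$ uniformly $\mu$-continuous, I would first note $\nu$ is finitely additive (each $\pi_n\circ\nu$ is, and $\{\pi_n:n\in\N\}$ separates points of $\ell_\infty$), then show $\nu$ is $\mu$-continuous in norm: for $\epsilon>0$ pick $\delta>0$ with $\sup_n|\pi_n(\nu(A))|\le\epsilon$ whenever $\mu(A)\le\delta$, and observe that $\|\nu(A)\|_{\ell_\infty}=\sup_n|\pi_n(\nu(A))|\le\epsilon$ for all such $A$; this norm $\mu$-continuity of a finitely additive measure forces countable additivity by the standard argument (a decreasing sequence to~$\emptyset$ has $\mu$-measure tending to~$0$). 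This closes the loop through (i), (ii), (iv), (v), (vi).

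It remains to fit (iii) into the cycle, which is the point I expect to require the most care. The easy direction, (i)$\impli$(iii), already follows from (i)$\impli$(ii)$\impli$(iii) above. For (iii)$\impli$(i) I would argue as follows: assume $\mathcal{R}(\nu)=\{\nu(A):A\in\Sigma\}$ is norm-separable in $\ell_\infty$, and let $D=\{\nu(A_k):k\in\N\}$ be a countable norm-dense subset of $\mathcal{R}(\nu)$. Each $\pi_n\circ\nu$ being countably additive, there is a Rybakov-type non-negative control measure $\mu_k$ for the (scalar) measure slices associated with $A_k$; more efficiently, form $\mu:=\sum_k 2^{-k}\mu_k/(1+\|\mu_k\|)$ where $\mu_k$ is a non-negative control measure (e.g.\ $|\pi_{n(k)}\circ\nu|$ for a suitable $n(k)$, or simply $\sum_n 2^{-n}|\pi_n\circ\nu|/(1+\|\pi_n\circ\nu\|)$ directly) so that $\mathcal{N}(\mu)\supseteq\bigcap_n\mathcal{N}(\pi_n\circ\nu)=\mathcal{N}(\nu)$. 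One then shows $\nu$ is $\mu$-continuous in norm using density of $D$: given $\epsilon>0$, approximate an arbitrary value $\nu(A)$ within $\epsilon$ by some $\nu(A_k)$ and use continuity of the finitely many (indeed countably many, but locally controlled) scalar measures; the separability is exactly what converts "each $\pi_n\circ\nu$ is $\mu$-continuous" into "$\nu$ is uniformly-in-coordinates, i.e.\ norm, $\mu$-continuous" on the range. A cleaner route, which I would prefer to present, is: (iii) implies $\overline{\mathcal{R}(\nu)}^{\,\|\cdot\|}$ is a separable, hence weakly compactly generated-free, subset; combined with boundedness of $\mathcal{R}(\nu)$ and the fact that on a separable subset of $\ell_\infty$ the weak$^*$ topology inherited via $\{\pi_n\}$ coincides with a metrizable topology, one upgrades "finitely additive with each $\pi_n\circ\nu$ countably additive" to weak countable additivity and then applies the Orlicz--Pettis theorem. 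The delicate point, and the main obstacle, is justifying that norm-separability of the range (rather than weak compactness) suffices to pass from coordinatewise countable additivity to countable additivity; I expect to handle this by showing (iii)$\impli$(vi) directly — extracting a single control measure $\mu$ from the countably many $\pi_n\circ\nu$ and checking uniform $\mu$-continuity on the dense countable subset $D$, then transferring it to all of $\mathcal{R}(\nu)$ by an $\epsilon/3$ argument — and then appealing to the already-established (vi)$\impli$(i).
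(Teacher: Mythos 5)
Most of your argument coincides with the paper's and is sound: (i)$\Rightarrow$(ii) is Theorem~\ref{theorem:BDS}, (ii)$\Rightarrow$(iii) is Theorem~\ref{theorem:Rosenthal-separable}, the family $\{\pi_n\circ\nu:n\in\N\}$ is bounded in ${\rm ca}(\Sigma)$ by the Nikod\'{y}m boundedness theorem because $\sup_{n}|(\pi_n\circ\nu)(A)|=\|\nu(A)\|_{\ell_\infty}<\infty$ for each $A$ (no gliding hump is needed beyond that), and then (iv)$\Leftrightarrow$(v)$\Leftrightarrow$(vi) is the quoted characterization of relative weak compactness in ${\rm ca}(\Sigma)$. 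Your direct proofs of (i)$\Rightarrow$(vi) via a control measure and of (vi)$\Rightarrow$(i) via the identity $\|\nu(A)\|_{\ell_\infty}=\sup_n|\pi_n(\nu(A))|$ are also correct.

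The genuine gap is (iii)$\Rightarrow$(i), which you rightly flag as the delicate point but do not close. Your ``$\epsilon/3$'' plan confuses density in the \emph{range} with control over \emph{sets of small measure}: uniform $\mu$-continuity quantifies over all $A\in\Sigma$ with $\mu(A)\leq\delta$, and knowing that $\nu(A)$ is norm-close to some $\nu(A_k)$ from a countable dense subset of $\mathcal{R}(\nu)$ gives no information about $\mu(A_k)$, nor any reason for $\|\nu(A_k)\|$ to be small; so density of $D$ cannot convert the individual $\mu$-continuity of each $\pi_n\circ\nu$ into uniform $\mu$-continuity. Your alternative route also fails as stated: Orlicz--Pettis requires $x^*\circ\nu\in{\rm ca}(\Sigma)$ for \emph{every} $x^*$ in the dual of $X:=\overline{{\rm span}}(\mathcal{R}(\nu))$, whereas you only know this for the countable total family $\{\pi_n|_X\}$, and metrizability of the coordinatewise topology on bounded separable subsets of $\ell_\infty$ does not upgrade a total set to one with the OT property. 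The missing ingredient is precisely the Diestel--Faires Theorem~\ref{theorem:DF}(ii): since $\mathcal{R}(\nu)$ is norm-separable, $\nu$ takes values in the separable subspace $X\subseteq\ell_\infty$, which contains no copy of $\ell_\infty$, and $\{\pi_n|_X:n\in\N\}$ is total for~$X$, so countable additivity of the compositions $\pi_n\circ\nu$ already forces $\nu\in{\rm ca}(\Sigma,X)$. That one citation replaces both of your attempted arguments and is how the paper finishes the proof.
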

\begin{proof}
The implications (i)$\impli$(ii) and (ii)$\impli$(iii) follow from Theorems~\ref{theorem:BDS} and~\ref{theorem:Rosenthal-separable}, respectively.

To prove (iii)$\impli$(i), note that (iii) implies that $\nu$ takes values in a separable subspace $X$ of~$\ell_\infty$. Since $X$ contains no subspace isomorphic to~$\ell_\infty$
and the set of restrictions $\{\pi_n|_X:n\in \N\}\sub X^*$ is total (for~$X$), the Diestel-Faires Theorem~\ref{theorem:DF}(ii) implies 
that $\nu$ is countably additive.

The equivalence (i)$\Leftrightarrow$(iv) is clear.

For each $A\in \Sigma$ we have $\sup_{n\in \N}|(\pi_n\circ \nu)(A)|=\|\nu(A)\|_{\ell_\infty}<\infty$. Therefore, 
the Nikod\'{y}m boundedness theorem (see, e.g., \cite[p.~14, Theorem~1]{die-uhl-J}) applies to conclude that
$\{\pi_n\circ \nu:n\in \N\}$ is bounded in ${\rm ca}(\Sigma)$. So, the equivalences (iv)$\Leftrightarrow$(v)$\Leftrightarrow$(vi)
follow from a well known characterization of relatively weakly compact subsets of~${\rm ca}(\Sigma)$ (see, e.g., \cite[p.~92, Theorem~13]{die-J}).
\end{proof}   

Motivated by condition~(vi) in Proposition~\ref{proposition:S} above, we present the following:

\begin{example}\label{example:S}
\rm Let $\lambda$ be the Lebesgue measure on the Lebesgue $\sigma$-algebra $\Sigma$ of~$[0,1]$. Take a norm-bounded
sequence $(f_n)_{n\in \N}$ in $L_1[0,1]$.
Let $\nu:\Sigma \to \ell_\infty$ be the map defined by
$$
	\nu(A):=\left(\int_A f_n \, d\lambda \right)_{n\in \N} \quad
	\text{for all $A\in \Sigma$}.
$$
According to the proof of Proposition~\ref{proposition:S} and the Dunford-Pettis theorem (see, e.g., \cite[p.~93]{die-J}), 
the map $\nu$ is countably additive if and only if $(f_n)_{n\in \N}$ is uniformly $\lambda$-integrable, that is,
$$
	\lim_{\lambda(A)\to 0}\sup_{n\in \N}\int_A|f_n|\, d\lambda=0.
$$
This holds when there is $g\in L_1[0,1]$ such that $|f_n|\leq g$ for all $n\in \N$. For further criteria for uniform integrability, see \cite{die9}, for example.

It is easy to check that the norm-bounded sequence $(n\chi_{[0,1/n)})_{n\in \N}$ in $L_1[0,1]$ is not uniformly $\lambda$-integrable.
The same holds for the sequence $(f_n)_{n\in \N}$ defined by $f_n(t):=\int_0^t n\chi_{[1-1/n,1)}(s)/(1-s) ds$ for $t\in[0,1]$ and $n\in \N$, \cite[p.~295]{oka-ric-3}.
More generally, \cite[Example~1]{bal-mur} provides a norm-bounded sequence in $L_1[0,1]$ whose restriction to any
$A\in \Sigma\setminus \mathcal{N}(\lambda)$ is not uniformly $\lambda$-integrable.
\end{example}

Exploiting the fact that the Banach spaces $\ell_\infty$ and $L_\infty[0,1]$ are isomorphic (see, e.g., \cite[Theorem~4.3.10]{alb-kal}), let us see
that $L_1[0,1]$ considered as a total subspace of $L_\infty[0,1]^*$ fails to have the OT property.

\begin{example}\label{example:Linfty}
\rm Let $\lambda$ and $\Sigma$ be as in Example~\ref{example:S}. Define $\nu: \Sigma \to L_\infty[0,1]$ by 
$\nu(A)= \chi_A$ for all $A \in \Sigma$. Then $\nu\not\in {\rm ca}(\Sigma,L_\infty[0,1])$ and
for each $\varphi\in L_1[0,1]$ we have $\varphi \circ \nu \in {\rm ca}(\Sigma)$, because $(\varphi\circ \nu)(A)=\int_A \varphi \, d\lambda$ for all $A\in \Sigma$.
So, $L_1[0,1]\subseteq L_\infty[0,1]^*$ fails to have the OT property.
\end{example}

It turns out that the existence of a norm-separable subset of~$X^*$ having the OT property prevents the Banach space $X$ from containing subspaces isomorphic 
to~$\ell_\infty$. This is asserted in Corollary~\ref{corollary:countableOT} below, whose proof requires our main result:

\begin{theorem}\label{theorem:ellinfty-uncountable-2}
Let $W \sub \ell_\infty^*$ be a set not containing sets equivalent to the canonical basis of~$\ell_1(\mathfrak{c})$. Then $W$ fails the OT property.
\end{theorem}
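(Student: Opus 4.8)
The goal is to show that if $W \subseteq \ell_\infty^*$ has the OT property, then $W$ must contain a copy of the canonical $\ell_1(\mathfrak c)$-basis. Contrapositively, assume $W$ contains no such copy; we want to build a map $\nu : \Sigma \to \ell_\infty$ on some $\sigma$-algebra $\Sigma$ with $x^* \circ \nu \in {\rm ca}(\Sigma)$ for all $x^* \in W$ but $\nu \notin {\rm ca}(\Sigma,\ell_\infty)$. By Proposition~\ref{proposition:convex} and Proposition~\ref{proposition:OT-sequential} we may freely replace $W$ by its convex/linear span and by its weak$^*$-sequential closure without changing the OT property, so there is considerable room to normalize $W$ before constructing the counterexample. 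The natural candidate for $\nu$ is a suitable "identity-like" measure: recall that $\ell_\infty = C(\beta\N)$ and, more usefully here, $\ell_\infty$ contains isometric copies of $C(K)$ for various compact $K$; the canonical failure of countable additivity comes from maps of the form $A \mapsto \chi_A$ on a $\sigma$-algebra that is far from being separable in the measure-algebra sense.

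\textbf{Key steps.} First I would recall the structure of $\ell_\infty^*$: by the classical Yosida–Hewitt / representation theory, $\ell_\infty^* = \ell_1 \oplus_1 (\text{purely finitely additive part})$, or more functorially $\ell_\infty^* \cong \mathcal{M}(\beta\N)$, the space of regular Borel measures on $\beta\N$. The hypothesis that $W$ contains no $\ell_1(\mathfrak c)$-basis should be leveraged via a result on the cardinality/structure of subsets of $L_1$-type spaces: a bounded subset of an $L_1(\mu)$-space that contains no equivalent copy of the $\ell_1(\mathfrak c)$-basis must, in a suitable sense, "live on a small portion" of the measure algebra — this is where results of the Godefroy / Granero–Hernández circle (cited in the Introduction) on $\ell_1(\mathfrak c)$ inside boundaries come in. Concretely, I expect the argument to produce, from the failure of containing $\ell_1(\mathfrak c)$, a single probability measure $\mu$ (or a countable family) on some measurable structure such that every element of $W$ is "absolutely continuous" with respect to a fixed small sub-$\sigma$-algebra or a fixed separable measure algebra. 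Second, using this, I would construct the measurable space: take $\Sigma = \Sigma_I$, the Baire $\sigma$-algebra of $\{0,1\}^I$ for $I$ of cardinality $\mathfrak c$ (or simply $I = \mathfrak c$), with the usual measure $\lambda_I$ as in Subsection~\ref{subsection:Cantor}; this has density character $\mathfrak c$. Define $\nu(A) := $ the sequence obtained by testing $\chi_A \in L_\infty(\lambda_I)$ against a countable separating family — more precisely, realize $L_\infty(\lambda_I)$ (or the relevant quotient) as a subspace of $\ell_\infty$ via some fixed countable separating set of functionals, and let $\nu$ be the composition $A \mapsto \chi_A^{\bullet} \hookrightarrow \ell_\infty$. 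One checks that $x^* \circ \nu \in {\rm ca}(\Sigma_I)$ for every $x^* \in W$ precisely because each such $x^*$ restricts to a countably additive measure on $\Sigma_I$ (this is where the "no $\ell_1(\mathfrak c)$" hypothesis does its work — it forces $x^* \circ \nu$, a priori only finitely additive, to actually be countably additive, via a Rosenthal-$\ell_1$-dichotomy / Nikodým-type argument). Finally, $\nu \notin {\rm ca}(\Sigma_I, \ell_\infty)$ because, by Proposition~\ref{proposition:S}, countable additivity of an $\ell_\infty$-valued measure forces its range to be norm-separable, whereas the range of this $\nu$ — essentially the image of the characteristic functions of $\Sigma_I$ — is non-separable since ${\rm dens}(L_1(\lambda_I)) = \mathfrak c$.

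\textbf{Main obstacle.} The delicate point is the precise mechanism by which "$W$ contains no $\ell_1(\mathfrak c)$-basis" upgrades the finite additivity of each $x^* \circ \nu$ to genuine countable additivity, uniformly enough to run the construction. A finitely additive $x^* \circ \nu$ fails countable additivity exactly when $x^*$ "sees" a disjoint sequence $(B_n)$ with $x^*(\nu(B_n)) \not\to 0$; if this happened for a rich enough family of $x^*$'s one could extract an $\ell_1$-sequence, and pushing the index set up to $\mathfrak c$ (using that $\Sigma_I$ has $\mathfrak c$-many "independent" coordinates and a Rosenthal/independence argument à la the $\ell_1(\mathfrak c)$-in-$C(K)^*$ literature) would contradict the hypothesis. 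Making this quantitative — controlling the size of the "bad set" of functionals and matching it against the cardinal $\mathfrak c$ — is the technical heart. A secondary subtlety is the reduction from an arbitrary $\sigma$-algebra to the canonical $\Sigma_I$: one wants to argue that if the OT property fails at all, it fails on $\Sigma_I$, which should follow by a maximality/exhaustion argument producing $\mathfrak c$-many independent sets inside the witnessing $\sigma$-algebra, but the bookkeeping (and the appeal to Maharam-type homogeneity of the relevant measure algebra) needs care. I would also want to double-check the interplay with Proposition~\ref{proposition:tilde} and Proposition~\ref{proposition:diagram}, which reduce everything to $\ell_\infty(W_0)$ for countable $W_0$ and may streamline the separability-of-range conclusion.
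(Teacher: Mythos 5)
Your overall shape — decompose $\ell_\infty^*\cong C(\beta\N)^*=\mathcal{M}(\beta\N)$ and exploit the hypothesis to show that $W$ ``lives on a small part'' of this $L_1$-type space — matches the paper's first half: there one fixes a maximal family $\Delta$ of mutually singular probability measures on $\beta\N$ containing the atomic measure $\mu_0(\{n\})=2^{-n}$, writes $C(\beta\N)^*$ as the $\ell_1$-sum of $\{L_1(\mu):\mu\in\Delta\}$, reduces via Theorem~\ref{theorem:Talagrand} to the case where $W$ is a subspace, and uses Lemma~\ref{lemma:Rosenthal-l1} to conclude that the set $\Delta_0$ of coordinates on which $W$ is nonzero has cardinality $<\mathfrak{c}$. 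But from that point on your proposal diverges and the gaps are real. The decisive step you are missing is the \emph{localization}: the paper applies Theorem~\ref{theorem:Kalton} to the operator $T:\ell_\infty\to\ell_\infty(\Delta_0)$, $T(x)(\mu)=\int R^{-1}(x)\,d\mu$ (which vanishes on $c_0$ because every $\mu\in\Delta_0\setminus\{\mu_0\}$ is singular to $\mu_0$), to produce an \emph{infinite set $A\sub\N$} on which all the non-atomic parts of the elements of $W$ vanish. The counterexample is then the humble $\nu:\mathcal{P}(A)\to\ell_\infty$, $\nu(B)=\chi_B$: for $\varphi\in W$ only the atomic piece $\mu_0$ survives, so $\varphi\circ\nu$ is an absolutely convergent sum of point masses, hence countably additive. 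No product space $\{0,1\}^{\mathfrak c}$ enters at all; that object belongs to Section~\ref{section:L1}, not here.

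Your proposed counterexample on $\Sigma_I=\Sigma_{\mathfrak c}$ cannot work as stated, for three reasons. First, the construction of $\nu$ must be tailored to $W$: if $W$ consists of a single purely finitely additive functional (say a Dirac measure at a free ultrafilter), it trivially contains no $\ell_1(\mathfrak c)$-basis, yet there is no mechanism by which $x^*\circ\nu$ becomes countably additive for a $\nu$ chosen in advance — the ``Rosenthal dichotomy upgrades finite to countable additivity'' step you flag as the technical heart does not exist, because a single bad functional already destroys the construction and a single functional never spans $\ell_1(\mathfrak c)$. The hypothesis must instead be used to \emph{choose} a sub-Boolean-algebra (the paper's $\mathcal{P}(A)$) transverse to the non-atomic support of $W$. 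Second, your final step infers $\nu\notin{\rm ca}(\Sigma_I,\ell_\infty)$ from non-separability of $\{\chi_A^\bullet\}$ in $L_\infty(\lambda_I)$, but the image of this set under a merely injective operator into $\ell_\infty$ can be separable or even relatively compact — this is exactly the content of Example~\ref{example:injective} — so you would need an \emph{isomorphic} embedding of $L_\infty(\lambda_{\mathfrak c})$ into $\ell_\infty$, and it is not clear that even a countable \emph{total} family of functionals on $L_\infty(\lambda_{\mathfrak c})$ exists (a countable subfamily of $L_1(\lambda_{\mathfrak c})$ depends on countably many coordinates and is therefore not total). Third, you should note that the reduction from ``no set equivalent to the canonical $\ell_1(\mathfrak c)$-basis'' to ``the closed span of $W$ contains no subspace isomorphic to $\ell_1(\mathfrak c)$'' requires Talagrand's theorem and the uncountable cofinality of $\mathfrak c$; without it the Rosenthal-type support lemma cannot be applied to $\overline{{\rm span}}(W)$.
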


Given a Banach space~$X$ and a non-empty set~$I$, recall that a set $\{x_i:i\in I\}\sub X$
is said to be {\em equivalent to the canonical basis of~$\ell_1(I)$} if it is bounded and there is a constant $c>0$ such that
$$
	\left\|\sum_{i\in I} a_i x_{i}\right\|\geq c \sum_{i\in I}|a_i|
$$
for every $(a_i)_{i\in I}\in \ell_1(I)$. In this case, $\overline{{\rm span}}(\{x_i: i\in I\})$ is isomorphic to~$\ell_1(I)$.

The proof of Theorem~\ref{theorem:ellinfty-uncountable-2} requires some known facts and uses the following result of Talagrand
(see \cite[Th\'{e}or\`{e}me~4]{tal4}). Recall that the {\em cofinality} (denoted by~$cf(\kappa)$) of a cardinal~$\kappa$
is the smallest cardinal~$\kappa'$ such that $\kappa$ is the union of $\kappa'$ many sets of cardinality $<\kappa$.
Both $cf(\omega_1)$ and $cf(\mathfrak{c})$ are uncountable (see, e.g., \cite[Corollary~5.12]{jec}).

\begin{theorem}[Talagrand]\label{theorem:Talagrand}
Let $I$ be a set such that $|I|$ has 
uncountable cofinality. Let $X$ be a Banach space and let $D \sub X$ be a set such that $X=\overline{{\rm span}}(D)$.  
If $X$ contains a subspace isomorphic to~$\ell_1(I)$, then $D$ contains a set which is equivalent to
the canonical basis of~$\ell_1(I)$.
\end{theorem}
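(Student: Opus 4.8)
The plan is to start from a copy $(y_i)_{i\in I}$ of the canonical $\ell_1(I)$-basis sitting inside $X$ --- that is, a bounded family, say $\|y_i\|\le M$, with $\|\sum_{i}a_iy_i\|\ge c\sum_i|a_i|$ for some $c>0$ --- and to manufacture, by a combinatorial selection procedure, an honest subset of $D$ of full cardinality $|I|$ that is still equivalent to the $\ell_1(I)$-basis. Writing $\kappa=|I|$, the hypothesis $cf(\kappa)>\omega$ will be used throughout in the form: $\kappa$ cannot be written as a countable union of sets of cardinality $<\kappa$; hence any partition of a $\kappa$-sized index set into countably many pieces has a piece of size~$\kappa$.

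First I would reduce to finite blocks. Since $X=\overline{{\rm span}}(D)$, fix $0<\eta<c$ and choose, for each~$i$, a finite set $F_i\sub D$ and a vector $u_i\in {\rm span}(F_i)$ with $\|y_i-u_i\|<\eta$. Because the lower $\ell_1$-estimate is stable under uniform perturbations ($\|\sum a_iu_i\|\ge\|\sum a_iy_i\|-\eta\sum|a_i|\ge(c-\eta)\sum|a_i|$), the family $(u_i)$ is again bounded and equivalent to the $\ell_1(I)$-basis, now with each~$u_i$ a finite linear combination of elements of~$D$ with support~$F_i$.

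The core of the argument is a combinatorial normalization of the supports $(F_i)$, carried out so as to keep a subfamily of full cardinality~$\kappa$ at every step. Applying the classical $\Delta$-system (sunflower) lemma, I would pass to a large subfamily (ideally of full cardinality~$\kappa$; see below) on which the $F_i$ form a sunflower with a fixed finite root~$R$, so that $u_i=r_i+v_i$ with $r_i\in {\rm span}(R)$ and the~$v_i$ supported on pairwise disjoint finite sets $G_i:=F_i\setminus R$. The root parts $r_i$ live in the fixed finite-dimensional (hence separable) space ${\rm span}(R)$; pigeonholing first to make them norm-bounded and then covering the relevant ball by countably many sets of small diameter, and invoking $cf(\kappa)>\omega$ at each step, I retain a $\kappa$-sized subfamily on which all~$r_i$ lie within~$\delta$ of a single vector~$r$. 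A short computation then shows that on coefficient vectors with $\sum_i a_i=0$ one has $\|\sum_i a_iv_i\|\ge(c-\eta-\delta)\sum_i|a_i|$, so that, after fixing one index~$i_0$, the differences $\{v_i-v_{i_0}\}$ are still $\ell_1$-equivalent over a set of size~$\kappa$, now with pairwise disjoint $D$-supports. Iterating countable pigeonhole once more (all powered by $cf(\kappa)>\omega$) I would fix the common block length $|G_i|=m$ and a common approximate coefficient pattern, and then descend by induction on~$m$: at each stage one selects, from the disjoint blocks, a single coordinate direction that inherits the $\ell_1$-estimate, normalizing the surviving coefficients into a compact range by one further pigeonhole, so that after finitely many steps the blocks reduce to single, distinct elements $d_i\in D$ equivalent to the canonical $\ell_1$-basis over a $\kappa$-sized index set.

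The main obstacle I anticipate is precisely this combinatorial core, and within it two genuinely delicate points. The first is preserving the \emph{full} cardinality~$\kappa$ rather than merely some regular cardinal below it: the standard sunflower lemma yields sunflowers of any prescribed regular size, and when~$\kappa$ is singular (uncountable cofinality but not regular) extra care is needed, exploiting directly that~$\kappa$ is not a countable union of smaller sets so that the repeated countable pigeonholing never drops the cardinality. This is exactly where the hypothesis $cf(\kappa)>\omega$ is indispensable, and it is what rules out trivial counterexamples built on a countable cofinality. The second is the inductive descent from $m$-element blocks to single elements while keeping the surviving coefficients bounded away from~$0$ and~$\infty$; here one must be careful that the normalizations used to trim the block length do not silently destroy either the upper bound (boundedness of the selected~$d_i$) or the lower $\ell_1$-estimate.
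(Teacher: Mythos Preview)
The paper does not contain a proof of this theorem: it is quoted as a result of Talagrand (with a reference to \cite{tal4}) and used as a black box in the proof of Theorem~\ref{theorem:ellinfty-uncountable-2}. There is therefore no in-paper argument to compare your proposal against.

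Regarding your outline on its own merits: the reduction to finitely supported $u_i\in{\rm span}(F_i)$, the countable pigeonholing on $|F_i|$, the $\Delta$-system extraction, and the stabilization of the root part in the finite-dimensional space ${\rm span}(R)$ are all standard moves, and you are right that $cf(\kappa)>\omega$ is precisely what makes each countable pigeonhole preserve cardinality~$\kappa$. (The $\Delta$-system step for singular~$\kappa$ of uncountable cofinality is indeed not the textbook version and needs its own justification, as you note.) However, the step you label ``inductive descent from $m$-element blocks to single elements'' is not merely delicate---it is the heart of the theorem, and your sketch does not actually carry it out. Having $v_i=\sum_{j=1}^{m}c_{i,j}d_{i,j}$ with pairwise disjoint supports and $(v_i)$ equivalent to the $\ell_1(\kappa)$-basis does \emph{not} by itself let you ``select a single coordinate direction that inherits the $\ell_1$-estimate'': the lower bound $\|\sum_i a_i v_i\|\ge c'\sum_i|a_i|$ may rely on interaction among the $m$ summands, and for a fixed finite coefficient vector the triangle inequality only says that \emph{some} partial sum is large for \emph{that} particular choice, not uniformly over all coefficients. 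Turning this into a genuine dichotomy---either dropping one coordinate still leaves a $\kappa$-sized $\ell_1$-family (so induction applies), or the dropped coordinates themselves form one---requires an additional argument, and this is essentially where Talagrand's work lies. Your proposal flags the step as the main obstacle but does not supply the mechanism that overcomes it.
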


The following result is an application of~\cite[Lemma~1.1]{ros-J-4}:

\begin{lemma}\label{lemma:Rosenthal-l1}
Let $X$ be the $\ell_1$-sum of a family of Banach spaces $\{X_i:i\in I\}$ and, for each $i\in I$, let $\pi_i:X\to X_i$ be the canonical projection. Let $W \sub X$ be a subspace.
If the set $\{i\in I: \, \pi_i(W)\neq\{0\}\}$ contains a set $J$ such that~$|J|$ has uncountable cofinality, then $W$ contains a subspace isomorphic to~$\ell_1(J)$. 
\end{lemma}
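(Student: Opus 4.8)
The plan is to exhibit, inside $W$, an explicit family equivalent to the canonical basis of $\ell_1(J)$. I would first normalize vectors of $W$ that are large in a single coordinate, then use Rosenthal's lemma to pass to a large subfamily whose ``off-diagonal'' coordinates are uniformly small, and finally read off the $\ell_1(J)$-estimate coordinatewise.

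\emph{Normalization.} For each $i\in J$, since $\pi_i(W)\neq\{0\}$, pick $v_i\in W$ with $\|v_i\|_X=1$ and $\pi_i(v_i)\neq 0$, and set $\delta_i:=\|\pi_i(v_i)\|_{X_i}\in(0,1]$. Then $J=\bigcup_{n\in\N}\{i\in J:\delta_i>1/n\}$; since $|J|$ has uncountable cofinality, $J$ is not a countable union of subsets of cardinality $<|J|$, so there are $n_0\in\N$ and $J_1\sub J$ with $|J_1|=|J|$ and $\delta_i>\delta:=1/n_0$ for every $i\in J_1$. Note that each $v_i$ has \emph{countable} support $S_i:=\{k\in I:\pi_k(v_i)\neq 0\}$, because $\sum_{k\in I}\|\pi_k(v_i)\|_{X_k}=\|v_i\|_X<\infty$. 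To each $i\in J_1$ associate the non-negative, countably additive, uniformly bounded (by $1$) measure $\mu_i$ on $\mathcal{P}(J_1)$ defined by $\mu_i(A):=\sum_{j\in A}\|\pi_j(v_i)\|_{X_j}$; each $\mu_i$ is purely atomic, concentrated up to mass $<\delta/2$ on a finite subset of $J_1$, and $\{\{j\}:j\in J_1\}$ is a disjoint family in $\mathcal{P}(J_1)$.

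\emph{Disjointification and conclusion.} Applying Rosenthal's lemma \cite[Lemma~1.1]{ros-J-4} to the family $\{\mu_i:i\in J_1\}$ and the disjoint family of singletons, with $\varepsilon=\delta/2$ (here is where the uncountable cofinality of $|J_1|=|J|$ is used, to keep the subfamily of full cardinality), I obtain $J_2\sub J_1$ with $|J_2|=|J|$ and $\sum_{j\in J_2\setminus\{i\}}\|\pi_j(v_i)\|_{X_j}=\mu_i(J_2\setminus\{i\})<\delta/2$ for every $i\in J_2$. Then, for finitely supported scalars $(a_i)_{i\in J_2}$, using $\|x\|_X=\sum_{k\in I}\|\pi_k(x)\|_{X_k}$, restricting to the coordinates in $J_2$, and bounding the diagonal terms below by $\delta|a_i|$ and the rest by the triangle inequality,
\[
\Big\|\sum_{i\in J_2}a_iv_i\Big\|_X\ \ge\ \sum_{j\in J_2}\Big\|\pi_j\Big(\sum_{i\in J_2}a_iv_i\Big)\Big\|_{X_j}\ \ge\ \delta\sum_{i\in J_2}|a_i|-\sum_{i\in J_2}|a_i|\sum_{j\in J_2\setminus\{i\}}\|\pi_j(v_i)\|_{X_j}\ \ge\ \frac{\delta}{2}\sum_{i\in J_2}|a_i|,
\]
while trivially $\|\sum_{i\in J_2}a_iv_i\|_X\le\sum_{i\in J_2}|a_i|$. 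Hence $\{v_i:i\in J_2\}$ is equivalent to the canonical basis of $\ell_1(J_2)$, so $\overline{\mathrm{span}}\{v_i:i\in J_2\}$ is a subspace of $W$ (which is norm-closed) isomorphic to $\ell_1(J_2)\cong\ell_1(J)$.

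The step I expect to be most delicate is the disjointification: one needs Rosenthal's lemma in the form that, for a uniformly bounded family $(\mu_i)_{i\in J_1}$ of measures against a disjoint family of sets, yields a ``relatively disjoint'' subfamily of the \emph{same} cardinality whenever $|J_1|$ has uncountable cofinality — the subtlety being that, for singular $|J_1|$, a naive $\Delta$-system argument on the (finite) sets where the $\mu_i$ concentrate produces only a subfamily of size $cf(|J_1|)$, not $|J_1|$. That each $\mu_i$ is concentrated up to arbitrarily small mass on a finite set is exactly what makes \cite[Lemma~1.1]{ros-J-4} applicable; the remaining estimates are routine bookkeeping with the $\ell_1$-norm.
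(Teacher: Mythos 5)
Your proof is correct and takes essentially the same route as the paper's: both use the uncountable cofinality of $|J|$ to stabilize a lower bound $\delta>0$ on the ``diagonal'' coordinates over a subset of full cardinality, and then invoke Rosenthal's Lemma~1.1 from \cite{ros-J-4} to disjointify and extract the $\ell_1(J)$-copy. The only difference is presentational: the paper packages the data as an operator $T\colon X\to\ell_1(J)$ built from functionals $x_i^*\in B_{X_i^*}$ and leaves the final lower estimate to the citation, whereas you work directly with the coordinate norms $\|\pi_j(v_i)\|_{X_j}$ and write out the perturbation estimate by hand.
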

\begin{proof}
For each $i\in J$ we fix $x_i\in B_W$ and $x_i^*\in B_{X_i^*}$ with $x_i^*(\pi_i(x_i)) \neq 0$. Define an operator $T:X\to \ell_1(J)$ by
$$
	T(x):=\big(x_i^*(\pi_i(x))\big)_{i\in J} \quad\text{for all $x\in X$}.
$$
For each $k\in \N$ we define $J_{k}:=\{i\in J: |x_i^*(\pi_i(x_i))|> \frac{1}{k}\}$, so that $J=\bigcup_{k\in \N}J_k$.
Since $|J|$ has uncountable cofinality, there is $k\in \N$ such that $|J_k|=|J|$. 

For each $i\in J$, let $\phi_i\in \ell_1(J)^*$ be the $i$th-coordinate functional. Since $J_k$ is contained
in the set
$$
	J':=\left\{i\in J: \, \big|\phi_i(T(x))\big|>\frac{1}{k} \, \text{ for some $x\in B_W$}\right\},
$$ 
we have $|J'|=|J|$. We can now apply \cite[Lemma~1.1]{ros-J-4} to conclude
that $W$ contains a subspace isomorphic to~$\ell_1(J)$. 
\end{proof}

Another ingredient for proving Theorem~\ref{theorem:ellinfty-uncountable-2} is Theorem~\ref{theorem:Kalton} below. It can be proved 
as \cite[Proposition~4]{kal74} (which can also be found in \cite[Theorem~2.5.4]{alb-kal}), bearing in mind that there is an almost disjoint family $\mathcal{A}$ 
of infinite subsets of~$\N$ with $|\mathcal{A}|=\mathfrak{c}$ (see, e.g., the proof of \cite[Lemma~2.5.3]{alb-kal}).

\begin{theorem}[Kalton]\label{theorem:Kalton}
Let $T:\ell_\infty \to \ell_\infty(I)$ be an operator, where $I$ is a non-empty set with $|I|<\mathfrak{c}$. If $T$ vanishes on~$c_0$, then
there is an infinite set $A \sub \N$ such that $T$ vanishes on the subspace
$Z_A:=\big\{(x_n)_{n\in \N} \in \ell_\infty: \, x_n=0\text{ for all $n\in \N\setminus A$}\big\}$.
\end{theorem}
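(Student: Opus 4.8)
The plan is to analyze the operator~$T$ through the family of scalar functionals it induces, and then to combine a disjointification argument with a cardinality count against a large almost disjoint family. I would fix an almost disjoint family $\{A_\xi:\xi<\mathfrak{c}\}$ of infinite subsets of~$\N$, whose existence is recalled just before the statement. For each $i\in I$ let $\phi_i\in \ell_\infty(I)^*$ be the (norm one) $i$th coordinate functional and set $\mu_i:=T^*(\phi_i)\in \ell_\infty^*$, so that $(Tx)_i=\mu_i(x)$ for all $x\in\ell_\infty$. Identifying $\ell_\infty^*$ with the space of bounded finitely additive signed measures on $\mathcal{P}(\N)$, each $\mu_i$ has finite total variation $|\mu_i|(\N)=\|\mu_i\|\le\|T\|$; and since $T$ vanishes on~$c_0$, for the $n$th unit vector $e_n\in c_0$ we get $\mu_i(\{n\})=\mu_i(e_n)=\phi_i(Te_n)=0$ for every $n$, so $\mu_i$, and hence~$|\mu_i|$, vanishes on all finite subsets of~$\N$. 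The goal thus reduces to producing an infinite $A\subseteq\N$ with $|\mu_i|(A)=0$ for every~$i$; indeed, $|\mu_i(x)|\le \|x\|_\infty\,|\mu_i|(A)$ for every $x\in Z_A$, so $|\mu_i|(A)=0$ forces $(Tx)_i=\mu_i(x)=0$ for all $x\in Z_A$ and all~$i$, i.e. $T$ vanishes on~$Z_A$.

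The heart of the argument is the following lemma: for any bounded non-negative finitely additive $\rho$ on $\mathcal{P}(\N)$ vanishing on finite sets, the set $\{\xi<\mathfrak{c}:\rho(A_\xi)>0\}$ is countable. To prove it I would fix $\epsilon>0$ and suppose $A_{\xi_1},\dots,A_{\xi_k}$ are distinct members of the family with $\rho(A_{\xi_j})>\epsilon$. By almost disjointness each $F_j:=A_{\xi_j}\cap\bigcup_{l\ne j}A_{\xi_l}$ is finite, so the sets $B_j:=A_{\xi_j}\setminus F_j$ are pairwise disjoint and, since $\rho(F_j)=0$, satisfy $\rho(B_j)=\rho(A_{\xi_j})>\epsilon$. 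Finite additivity and monotonicity then give $k\epsilon<\sum_{j=1}^{k}\rho(B_j)=\rho(\bigcup_j B_j)\le\rho(\N)$, whence $k<\rho(\N)/\epsilon$; thus $\{\xi:\rho(A_\xi)>\epsilon\}$ is finite, and taking the union over $\epsilon=1/m$, $m\in\N$, shows $\{\xi:\rho(A_\xi)>0\}$ is countable. Applying this with $\rho=|\mu_i|$ yields, for each $i\in I$, a countable set $\Xi_i:=\{\xi<\mathfrak{c}:|\mu_i|(A_\xi)>0\}$.

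It then remains to perform the cardinality count. Since $|I|<\mathfrak{c}$, the union $\bigcup_{i\in I}\Xi_i$ has cardinality at most $|I|\cdot\aleph_0<\mathfrak{c}$. Hence there is $\xi_0<\mathfrak{c}$ with $\xi_0\notin\bigcup_{i\in I}\Xi_i$, that is, $|\mu_i|(A_{\xi_0})=0$ for every $i\in I$. Taking $A:=A_{\xi_0}$, which is infinite, and recalling the reduction from the first paragraph, we conclude that $T$ vanishes on~$Z_A$, as desired.

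I expect the main obstacle to be the disjointification lemma: it is precisely the point where one must exploit simultaneously the almost disjointness of the family (to replace each $A_\xi$ by a genuinely disjoint $B_\xi$ modulo a finite error), the vanishing of $\mu_i$ on finite sets (so that this reduction preserves $|\mu_i|$-mass), and the finiteness of the total variation $|\mu_i|(\N)\le\|T\|$ (to bound how many members can carry $\rho$-mass above a fixed threshold). Once this lemma is in hand, the role of the hypothesis $|I|<\mathfrak{c}$ against an almost disjoint family of full cardinality~$\mathfrak{c}$ is a short pigeonhole step.
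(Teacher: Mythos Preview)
Your proof is correct and follows precisely the route the paper points to: the paper does not prove this theorem itself but refers to Kalton's original argument (and its presentation in Albiac--Kalton), highlighting the almost disjoint family of size~$\mathfrak{c}$ as the key ingredient, and your disjointification lemma together with the cardinality count is exactly the standard implementation of that idea for the target space~$\ell_\infty(I)$.
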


Let $K$ be a compact Hausdorff topological space. By Riesz's representation theorem, the dual 
$C(K)^*$ is the Banach space of all real-valued regular Borel measures on~$K$
with the variation norm. The subset of $C(K)^*$ consisting of all regular Borel probability measures on~$K$
is denoted by $P(K)$. Given $\mu\in P(K)$, any $\xi\in C(K)^*$ can be written (in a unique way) as 
$\xi=f \, d\mu+\xi'$ for some $f\in L_1(\mu)$ and some $\xi'\in C(K)^*$ which is singular with respect to~$\mu$;
here $f \, d\mu\in C(K)^*$ is given by $(f\, d\mu)(B)=\int_B f \, d\mu$ for every Borel set $B \sub K$ and, as usual, we write $f=\frac{d\xi}{d\mu}$.

The Banach spaces $\ell_\infty$ and $C(\beta\N)$ are isometrically isomorphic, where $\beta\N$ denotes the Stone-\v Cech compactification of~$\N$ 
with the discrete topology. Recall that $\beta\N$ is the set of all ultrafilters on~$\mathbb{N}$, which is a compact Hausdorff topological space such 
that the family $\widehat{A}:=\{\mathcal{U}\in \beta\N: A\in \mathcal{U}\}$, for $A\sub \N$, forms a basis of clopen sets.
Each $n\in \N$ is identified with the ultrafilter $\{A \sub \N: n\in A\} \in \beta\N$.

We are now ready to prove the main result of this section:

\begin{proof}[Proof of Theorem~\ref{theorem:ellinfty-uncountable-2}]
Let $R:C(\beta\N) \to \ell_\infty$ be the isometric isomorphism satisfying $R(\chi_{\widehat{A}})=\chi_{A}$ for all $A\sub \N$.
Let $\mu_0 \in P(\beta\N)$ be the regular Borel probability measure on~$\beta\N$ satisfying $\mu_0(\{n\})=2^{-n}$ for all $n\in \N$.
Observe that for each $f\in L_1(\mu_0)$ the series of real numbers $\sum_{n\in \N}f(n)2^{-n}$ is absolutely convergent and we have
\begin{equation}\label{eqn:atomic}
	\int_{\widehat{B}}f\, d\mu_0=\sum_{n\in B} f(n)2^{-n}
	\quad
	\text{for every $B \sub \N$.}
\end{equation}
Zorn's lemma ensures the existence of a set $\Delta \sub P(\beta\N)$ containing~$\mu_0$ and consisting of mutually singular elements of~$P(\beta\N)$ 
such that $\Delta$ is maximal (with respect to the inclusion) among all subsets of~$P(\beta\N)$ satisfying those properties.
Then for any $\xi\in C(\beta\N)^*$ we have 
\begin{equation}\label{eqn:l1-sum}
	\xi=\sum_{\mu\in \Delta}\frac{d\xi}{d\mu}\, d\mu, 
\end{equation}
the series being absolutely convergent in $C(\beta\N)^*$, and 
the space $C(\beta\N)^*$ is isometrically isomorphic to the $\ell_1$-sum~$Z$ of the family of Banach spaces $\{L_1(\mu): \mu \in \Delta\}$ via the operator 
$S:C(\beta\N)^* \to Z$ defined by
$$
	\quad S(\xi):=\left(\frac{d\xi}{d\mu}\right)_{\mu\in \Delta} \quad\text{for all $\xi\in C(\beta\N)^*$}
$$
(see, e.g., the proof of \cite[Proposition~4.3.8(iii)]{alb-kal}). 

By Theorem~\ref{theorem:Talagrand} 
we can assume without loss of generality that $W$ is a subspace of~$\ell_\infty^*$.
The conclusion is obvious if $W=\{0\}$, so we assume that $W\neq \{0\}$. 
For each $\mu\in \Delta$, let $\pi_\mu: Z \to L_1(\mu)$ be the canonical projection. 
Since $W$ does not contain subspaces isomorphic to~$\ell_1(\mathfrak{c})$, the same holds for the subspace 
$(S\circ R^*)(W) \sub Z$ and so the set
$$
	\Delta_0:=\big\{\mu\in \Delta: \, \pi_\mu\big((S\circ R^*)(W)\big)\neq \{0\}\big\}
$$
is non-empty and has cardinality $|\Delta_0|<\mathfrak{c}$ (by Lemma~\ref{lemma:Rosenthal-l1}). 

Let $T:\ell_\infty \to \ell_\infty(\Delta_0)$ be the operator defined by
$$
	T(x)(\mu):=\begin{cases}
		\int_{\beta \N} R^{-1}(x) \, d\mu & \text{if $\mu\neq \mu_0$} \\
		0 & \text{if $\mu=\mu_0$}
			\end{cases}
$$
for every $\mu\in \Delta_0$ and for every $x\in \ell_\infty$. Every $\mu \in \Delta \setminus \{\mu_0\}$ is singular with respect to~$\mu_0$ and hence, 
$\mu(\widehat{A})=0$ for every finite set $A \sub \N$. Bearing in mind that 
$c_0=\overline{{\rm span}}(\{\chi_{A}: \text{$A\sub \N$ finite}\}) \sub \ell_\infty$, we deduce that 
$T(x)=0$ for every $x \in c_0$. The fact that $|\Delta_0|<\mathfrak{c}$ allows us to apply 
Theorem~\ref{theorem:Kalton} to get an infinite set $A \sub \N$ such that $T$ vanishes on 
$$
	Z_A:=\big\{(x_n)_{n\in \N} \in \ell_\infty: \, x_n=0\text{ for all $n\in \N\setminus A$}\big\},
$$
that is, 
\begin{equation}\label{eqn:vanishing}
	\int_{\beta \N} R^{-1}(x) \, d\mu=0 
	\quad\text{for every $x\in Z_A$ and for every $\mu \in \Delta_0\setminus \{\mu_0\}$}.
\end{equation}

Define a finitely additive map $\nu:\mathcal{P}(A)\to \ell_\infty$ by $\nu(B):=\chi_B$ for all $B \subseteq A$. 
Note that $\nu$ is not countably additive, because $A$ is infinite and $\|\nu(\{n\})\|=1$ for every $n\in A$. 
To finish the proof we will show that $W$ fails the OT property by checking that
$\varphi \circ \nu\in {\rm ca}(\mathcal{P}(A))$ for arbitrarily $\varphi\in W$. 
Observe first that
$$
	R^*(\varphi)\stackrel{\eqref{eqn:l1-sum}}{=}\sum_{\mu\in \Delta}\frac{dR^*(\varphi)}{d\mu} \, d\mu=
	\sum_{\mu\in \Delta_0}\frac{dR^*(\varphi)}{d\mu} \, d\mu,
$$
the series being absolutely convergent in~$C(\beta\N)^*$. 
Moreover, for each $B \sub A$ we have $\chi_B\in Z_A$ and so \eqref{eqn:vanishing} yields 
$\mu(\widehat{B})=0$ for all $\mu \in \Delta_0\setminus \{\mu_0\}$. Hence,
$$
	(\varphi\circ \nu)(B)=R^*(\varphi)(\chi_{\widehat{B}})=\sum_{\mu\in \Delta_0}\int_{\widehat{B}}\frac{dR^*(\varphi)}{d\mu} \, d\mu=
	\begin{cases}
	0 & \text{if $\mu_0\not\in \Delta_0$} \\
	\int_{\widehat{B}}\frac{dR^*(\varphi)}{d\mu_0} \, d\mu_0 & \text{if $\mu_0\in \Delta_0$.}
	\end{cases}
$$
Therefore, if $\mu_0\not \in \Delta_0$, then $\varphi\circ \nu$ is identically null and so countably additive.
If $\mu_0 \in \Delta_0$, then
$$
	(\varphi\circ \nu)(B)=\int_{\widehat{B}}\frac{dR^*(\varphi)}{d\mu_0} \, d\mu_0\stackrel{\eqref{eqn:atomic}}{=}\sum_{n\in B}
	\frac{dR^*(\varphi)}{d\mu_0}(n)2^{-n}
$$
for every $B \sub A$, where the series $\sum_{n\in A} \frac{dR^*(\varphi)}{d\mu_0}(n)2^{-n}$ is absolutely convergent, 
hence $\varphi \circ \nu$ is countably additive. The proof is finished.
\end{proof}

The converse of Theorem~\ref{theorem:ellinfty-uncountable-2} fails to hold, in general. An example follows:

\begin{example}\label{example:Susumu3}
\rm Let $2\mathbb{N}-1$ (resp., $2\mathbb{N}$) be the set of all odd (resp., even) natural numbers.
With the notation of Theorem~\ref{theorem:Kalton}, let $W \sub \ell_\infty^*$ be the norm-closure of $(Z_{2\mathbb{N}-1})^\perp + \ell_1$.
Then:
\begin{enumerate}
\item[(i)] $W$ is total and contains a subspace isometric to~$\ell_1(2^\mathfrak{c})$; and
\item[(ii)] $W$ fails the OT property.
\end{enumerate} 
Indeed, $W$ is total because it contains~$\ell_1$. Let $P:\ell_\infty \to Z_{2\mathbb{N}}$ be the canonical projection 
and define $\Phi: Z_{2\mathbb{N}}^* \to (Z_{2\mathbb{N}-1})^\perp$ by $\Phi(\xi):=\xi \circ P$ for all $\xi \in Z_{2\mathbb{N}}^*$. Then $\Phi$ is an isometric embedding
and therefore $W$ contains a subspace isometric to~$Z_{2\mathbb N}^*$. Since $Z_{2\mathbb{N}}$ is isometric to $C(\beta\N)$, 
its dual contains a subspace isometric to $\ell_1(|\beta\N|)$, and the same holds for~$W$. Now, bear in mind that
$|\beta \N|=2^\mathfrak{c}$ (see, e.g., \cite[19.13(d)]{wil-J}) to get~(i). In order to show that $W$ fails the OT property, define
$\nu:\mathcal{P}(\N)\to \ell_\infty$ by $\nu(A):=\chi_{A\cap (2\mathbb{N}-1)}=Q(\chi_A)$ for all $A \sub \N$, where $Q:\ell_\infty \to Z_{2\mathbb{N}-1}$
is the canonical projection. Clearly, $\nu$ is not countably additive. However, we claim that $\varphi \circ \nu \in {\rm ca}(\mathcal{P}(\N))$
for every $\varphi\in W$. Indeed, by the Vitali-Hahn-Saks-Nikod\'{y}m theorem (see, e.g., \cite[p.~24, Corollary~10]{die-uhl-J}),
it suffices to check it whenever $\varphi \in (Z_{2\mathbb{N}-1})^\perp \cup \{e_n:n\in \N\}$, where $\{e_n:n\in \N\}$ is the canonical basis of~$\ell_1$. On the one hand, 
we have $\varphi \circ \nu=0$ (hence it is countably additive) whenever $\varphi \in (Z_{2\mathbb{N}-1})^\perp$. On the other hand, 
for each $n\in \N$ the composition $e_n \circ \nu$ is countably additive, because $(e_n\circ \nu)(A)=\chi_{A\cap (2\mathbb{N}-1)}(n)$ for all $A \sub \N$. 
This establishes the claim and hence, $W$ fails the OT property.  \end{example}

\begin{corollary}\label{corollary:smalltotal}
Let $X$ be a Banach space such that there is a subset of~$X^*$ having the OT property but not containing sets equivalent to the canonical 
basis of~$\ell_1(\mathfrak{c})$. Then $X$ does not contain subspaces isomorphic to~$\ell_\infty$.
\end{corollary}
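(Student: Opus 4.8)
The plan is to prove the contrapositive, reducing to Theorem~\ref{theorem:ellinfty-uncountable-2}. Assume, towards a contradiction, that $X$ contains a subspace isomorphic to~$\ell_\infty$ while, at the same time, $W\sub X^*$ has the OT property but contains no set equivalent to the canonical basis of~$\ell_1(\mathfrak{c})$; the goal is to produce such a set inside~$W$. Fix an isomorphic embedding $R:\ell_\infty\to X$ (it exists precisely because $X$ contains a copy of~$\ell_\infty$).

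The first step is to push the OT property through the adjoint: $R^*(W)\sub\ell_\infty^*$ has the OT property. Indeed, given a map $\eta:\Sigma\to\ell_\infty$ on a $\sigma$-algebra with $v\circ\eta\in{\rm ca}(\Sigma)$ for every $v\in R^*(W)$, the map $R\circ\eta:\Sigma\to X$ satisfies $x^*\circ(R\circ\eta)=(R^*x^*)\circ\eta\in{\rm ca}(\Sigma)$ for every $x^*\in W$, so $R\circ\eta\in{\rm ca}(\Sigma,X)$ because $W$ has the OT property; since $R$ is injective and bounded below, $\eta$ is then finitely additive with partial sums over disjoint sequences convergent, i.e.\ $\eta\in{\rm ca}(\Sigma,\ell_\infty)$. (This half uses only that $R$ is an embedding, nothing about~$\ell_\infty$.) Since $R^*(W)$ has the OT property, Theorem~\ref{theorem:ellinfty-uncountable-2} now provides a set $\{u_i:i\in\mathfrak{c}\}\sub R^*(W)$ equivalent to the canonical basis of~$\ell_1(\mathfrak{c})$; fix $c>0$ with $\|\sum_i a_iu_i\|\ge c\sum_i|a_i|$ for all $(a_i)\in\ell_1(\mathfrak{c})$.

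It remains to recover such a basis inside $W$ itself, which is where a counting argument enters. For $k\in\N$ put $J_k:=\{i\in\mathfrak{c}:u_i\in R^*(W\cap kB_{X^*})\}$. Since each member of~$W$ has finite norm, $\bigcup_{k\in\N}J_k=\mathfrak{c}$; and since $cf(\mathfrak{c})$ is uncountable, $\mathfrak{c}$ is not the union of countably many sets of cardinality~$<\mathfrak{c}$, so $|J_k|=\mathfrak{c}$ for some~$k$. Fix such a~$k$, set $J:=J_k$, and for each $i\in J$ pick $x_i^*\in W$ with $\|x_i^*\|\le k$ and $R^*(x_i^*)=u_i$. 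Then $\{x_i^*:i\in J\}\sub W$ is bounded by~$k$, and for $(a_i)_{i\in J}\in\ell_1(J)$ the absolutely convergent series $\sum_{i\in J}a_ix_i^*$ has $R^*$-image $\sum_{i\in J}a_iu_i$, so $\|\sum_{i\in J}a_ix_i^*\|\ge\|R^*\|^{-1}\|\sum_{i\in J}a_iu_i\|\ge\|R^*\|^{-1}c\sum_{i\in J}|a_i|$, the lower estimate for the subfamily being inherited from that for $\{u_i:i\in\mathfrak{c}\}$. As $|J|=\mathfrak{c}$, the family $\{x_i^*:i\in J\}$ — reindexed by~$\mathfrak{c}$ — is a set contained in~$W$ and equivalent to the canonical basis of~$\ell_1(\mathfrak{c})$, contradicting the choice of~$W$. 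Therefore $X$ contains no subspace isomorphic to~$\ell_\infty$.

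The step I expect to be the main obstacle is the passage from $R^*(W)$ back to~$W$. Theorem~\ref{theorem:ellinfty-uncountable-2} locates an $\ell_1(\mathfrak{c})$-basis only in the \emph{image} $R^*(W)\sub\ell_\infty^*$, and an arbitrary preimage in~$W$ of a given $u_i$ may have arbitrarily large norm; thus pulling the vectors back to~$X^*$ delivers the lower $\ell_1$-estimate for free but not the boundedness that is part of the definition of a set equivalent to the canonical basis of~$\ell_1(\mathfrak{c})$. The remedy is purely set-theoretic: $R^*(W)$ is the union of the countably many sets $R^*(W\cap kB_{X^*})$, and because $cf(\mathfrak{c})>\omega$ one can still keep $\mathfrak{c}$-many of the basis vectors $u_i$ lying over uniformly bounded elements of~$W$; everything else is routine.
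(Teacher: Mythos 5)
Your proof is correct and follows essentially the same route as the paper's: transfer the OT property through the fixed copy of~$\ell_\infty$ (the paper restricts the functionals of~$W$ to the subspace $Y\cong\ell_\infty$, you compose with~$R^*$, which amounts to the same thing) and then invoke Theorem~\ref{theorem:ellinfty-uncountable-2}. Your cofinality argument for extracting a norm-bounded subfamily of preimages of cardinality~$\mathfrak{c}$ addresses precisely the detail that the paper's one-line claim (that $W|_Y$ contains no set equivalent to the canonical basis of~$\ell_1(\mathfrak{c})$) silently requires, so it is a welcome elaboration rather than a detour.
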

\begin{proof}
Let $W \sub X^*$ be a set having the OT property such that $W$ does not contain sets equivalent to the canonical basis of~$\ell_1(\mathfrak{c})$.
Given any subspace $Y \subseteq X$, the set $W|_Y:=\{x^*|_Y:x^*\in W\} \subseteq Y^*$ has the OT property and does not contain sets equivalent to 
the canonical basis of~$\ell_1(\mathfrak{c})$. By Theorem~\ref{theorem:ellinfty-uncountable-2}, 
$Y$ cannot be isomorphic to~$\ell_\infty$. 
\end{proof}

\begin{remark}
\rm The converse of the previous corollary is not true in general, as witnessed by the space $X=c_0(\mathfrak{c})$.
Indeed, $c_0(\mathfrak{c})$ contains no subspace isomorphic to~$\ell_\infty$ (because 
any separable subspace of $c_0(\mathfrak{c})$ is isomorphic to a subspace of~$c_0$, while
any separable Banach space is isomorphic to a subspace of~$\ell_\infty$). 
The space $c_0(\mathfrak{c})$ is weakly compactly generated, so
its dual $c_0(\mathfrak{c})^*=\ell_1(\mathfrak{c})$ satisfies 
${\rm dens}(\ell_1(\mathfrak{c}),w^*)={\rm dens}(c_0(\mathfrak{c}),\|\cdot \|)=\mathfrak{c}$ (see, e.g., \cite[Theorem~13.3]{fab-ultimo}).
Fix a total set $W \sub \ell_1(\mathfrak{c})$. Then $W$ has the OT property by the Diestel-Faires Theorem~\ref{theorem:DF}(ii).
Since the subspace $W_0:=\overline{{\rm span}(W)}^{\|\cdot\|}$ is $w^*$-dense in $\ell_1(\mathfrak{c})$, 
we have ${\rm dens}(W_0,\|\cdot\|)=\mathfrak{c}$.
Now, a classical result of K\"{o}the (see, e.g., \cite[p.~29]{ros-70}) ensures that $W_0$ contains a subspace isomorphic to~$\ell_1(\mathfrak{c})$. 
Finally, Theorem~\ref{theorem:Talagrand} applies to conclude that $W$ contains a set equivalent to the canonical basis of~$\ell_1(\mathfrak{c})$.
\end{remark}

\begin{corollary}\label{corollary:countableOT}
Let $X$ be a Banach space. The following statements are equivalent:
\begin{enumerate}
\item[(i)] $X$ does not contain subspaces isomorphic to~$\ell_\infty$ and $X^*$ is $w^*$-separable.
\item[(ii)] There is a countable subset of~$X^*$ having the OT property. 
\item[(iii)] There is a norm-separable subset of~$X^*$ having the OT property.
\end{enumerate}
\end{corollary}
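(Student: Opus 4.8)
The plan is to close the circle (i)$\impli$(iii)$\impli$(ii)$\impli$(i). The implication (iii)$\impli$(ii) requires an observation: a norm-separable subset $W \sub X^*$ has the OT property iff any countable norm-dense subset $W_0 \sub W$ does, since $W \sub S_1(W_0) \sub \overline{W_0}^{\|\cdot\|}$ and Proposition~\ref{proposition:OT-sequential} (equivalence of the OT property for $W_0$, $\overline{W_0}^{\|\cdot\|}$ and $S_1(W_0)$, together with the monotonicity of the OT property under passing to supersets contained in $S_{\omega_1}$) transfers the property back and forth; so if $W$ has the OT property, the countable set $W_0$ does too. For (ii)$\impli$(i): suppose $W \sub X^*$ is countable with the OT property. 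Then $W$ is total by Proposition~\ref{proposition:OTimpliestotal}, so $W_0:=\overline{{\rm span}(W)}^{\|\cdot\|}$ is a separable, hence $w^*$-separable, norm-closed subspace which is $w^*$-dense in $X^*$ (it separates points of $X$), giving $w^*$-separability of $X^*$. Moreover, being countable, $W$ cannot contain a set equivalent to the canonical basis of $\ell_1(\mathfrak{c})$ (such a set is uncountable, having cardinality $\mathfrak{c}$); hence Corollary~\ref{corollary:smalltotal} applies and $X$ contains no subspace isomorphic to~$\ell_\infty$. That yields (i).

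The substantive implication is (i)$\impli$(iii). Assume $X$ contains no subspace isomorphic to~$\ell_\infty$ and $X^*$ is $w^*$-separable; I want to produce a norm-separable subset of~$X^*$ with the OT property. The natural candidate is to take a countable $w^*$-dense set $\{x_n^*:n\in \N\} \sub B_{X^*}$ and let $W:={\rm span}(\{x_n^*:n\in \N\})$ (or its norm closure). This $W$ is norm-separable and total (indeed $w^*$-dense in $X^*$). The key point to establish is that such a total set has the OT property. By the Diestel-Faires Theorem~\ref{theorem:DF}(ii), since $X$ contains no copy of~$\ell_\infty$, \emph{every} total subset of~$X^*$ has the OT property. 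So $W$, being total, has the OT property, and we are done.

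I expect the main obstacle to be essentially bookkeeping rather than a deep difficulty: one must be careful that ``$w^*$-separable dual'' delivers a \emph{countable} $w^*$-dense subset (true: a space with a countable dense subset in some topology has, by definition of density character, ${\rm dens}=\aleph_0$ in that topology), and that the span of such a set is genuinely total in the sense of Definition in Subsection~\ref{subsection:Banach}, i.e.\ $\bigcap \ker x^*=\{0\}$; this follows because a $w^*$-dense subset of $X^*$ separates points of~$X$. The other care point is the direction (iii)$\impli$(ii): one should phrase it as ``pass to a countable norm-dense subset and invoke Proposition~\ref{proposition:OT-sequential}'' rather than claiming outright that a norm-separable OT set contains a countable OT subset without argument. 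No step seems to require new machinery beyond Proposition~\ref{proposition:OTimpliestotal}, Proposition~\ref{proposition:OT-sequential}, Corollary~\ref{corollary:smalltotal}, and the Diestel-Faires Theorem~\ref{theorem:DF}(ii), all available in the excerpt.
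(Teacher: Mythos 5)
Your proof is correct and follows essentially the same route as the paper: Diestel--Faires~\ref{theorem:DF}(ii) for (i)$\Rightarrow$(ii)/(iii), Proposition~\ref{proposition:OTimpliestotal} together with Corollary~\ref{corollary:smalltotal} for (ii)$\Rightarrow$(i), and Proposition~\ref{proposition:OT-sequential} for (ii)$\Leftrightarrow$(iii). One harmless slip: the inclusion should read $\overline{W_0}^{\|\cdot\|}\subseteq S_1(W_0)$ rather than the reverse, but your argument only uses $W\subseteq\overline{W_0}^{\|\cdot\|}$ plus the trivial monotonicity of the OT property under supersets, so nothing is affected.
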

\begin{proof}
(i)$\impli$(ii) follows from the  Diestel-Faires Theorem~\ref{theorem:DF}(ii), bearing in mind that the 
$w^*$-separability of~$X^*$ is equivalent to the existence of a countable total subset of~$X^*$.
The implication (ii)$\impli$(i) is a consequence of Corollary~\ref{corollary:smalltotal} and Proposition~\ref{proposition:OTimpliestotal}.
The equivalence (ii)$\Leftrightarrow$(iii) follows from Proposition~\ref{proposition:OT-sequential}.
\end{proof}

\section{$L_1$ spaces of vector measures with separable range}\label{section:L1}

Let $\Sigma$ be a $\sigma$-algebra, let $X$ be a Banach space and let $\nu \in {\rm ca}(\Sigma,X)$. By Theorems~\ref{theorem:BDS} and~\ref{theorem:Rosenthal-separable}, the set 
$$
	\mathcal{R}(\nu)=\{\nu(A): \, A\in \Sigma\} \sub X
$$
(the range of $\nu$) is separable whenever $X=\ell_\infty$ and so, in this case, $\nu$ can be seen as an element of
${\rm ca}(\Sigma,Y)$ for some separable subspace $Y \sub \ell_\infty$. Conversely, 
if $X$ is separable, then it is isometric to a subspace of~$\ell_\infty$
and so $\nu$ can be seen as an element of ${\rm ca}(\Sigma,\ell_\infty)$. As a consequence, we get:

\begin{proposition} \label{proposition:linftyvm}
Let $E$ be a Banach function space over a finite measure space $(\Omega,\Sigma,\mu)$. The following statements are equivalent:
\begin{itemize}
\item[(i)] There is $\nu_0 \in {\rm ca}(\Sigma,\ell_\infty)$ such that $E$ is lattice-isomorphic to~$L_1(\nu_0)$.
\item[(ii)] There exist a separable Banach space $X$ and $\nu_1\in {\rm ca}(\Sigma,X)$ such that $E$ is lattice-isomorphic to~$L_1(\nu_1)$.
\end{itemize}
Moreover, both statements hold if $E$ is order continuous and separable.
\end{proposition}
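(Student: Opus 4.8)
The plan is to reduce both implications to Proposition~\ref{proposition:Susumu}, applied to a suitable isometric embedding into~$\ell_\infty$, and to obtain the final assertion from the description of $L_1$ spaces of order continuous Banach function spaces recalled in Subsection~\ref{subsection:L1spaces}. First I would record two elementary ingredients. (1) If $Y$ is a Banach space, $Z\sub Y$ is a \emph{closed} subspace and $\rho\in {\rm ca}(\Sigma,Y)$ has $\mathcal{R}(\rho)\sub Z$, then $I_{\rho}(L_1(\rho))\sub Z$; this is immediate since $I_{\rho}$ is a bounded operator, $\Sigma$-simple functions are norm-dense in $L_1(\rho)$, and $I_{\rho}(s)\in {\rm span}(\mathcal{R}(\rho))\sub Z$ for every simple~$s$. (2) For any isometric embedding $j:V\to \ell_\infty$ one has $j^*(B_{\ell_\infty^*})=B_{V^*}$ (Hahn--Banach), and $B_{V^*}$ has the OT property because ${\rm span}(B_{V^*})=V^*$ has it (Orlicz--Pettis together with Proposition~\ref{proposition:convex}); hence the hypothesis on $T^*(B_{Y^*})$ in Proposition~\ref{proposition:Susumu} is automatic whenever $T$ is an isometric embedding into~$\ell_\infty$.

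For (i)$\impli$(ii): starting from $\nu_0\in {\rm ca}(\Sigma,\ell_\infty)$ with $E$ lattice-isomorphic to~$L_1(\nu_0)$, Theorems~\ref{theorem:BDS} and~\ref{theorem:Rosenthal-separable} show that $\mathcal{R}(\nu_0)$ is norm-separable, so $Z:=\overline{{\rm span}}(\mathcal{R}(\nu_0))$ is a separable closed subspace of~$\ell_\infty$. With $j:Z\hookrightarrow \ell_\infty$ the inclusion, ingredient~(1) gives $I_{\nu_0}(L_1(\nu_0))\sub Z$ and ingredient~(2) gives the OT hypothesis, so Proposition~\ref{proposition:Susumu} produces $\nu_1\in {\rm ca}(\Sigma,Z)$ with $\nu_0=j\circ \nu_1$, $\mathcal{N}(\nu_0)=\mathcal{N}(\nu_1)$ and $L_1(\nu_0)=L_1(\nu_1)$ with equivalent norms. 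Since these two function spaces have the same underlying set and (as $\mathcal{N}(\nu_0)=\mathcal{N}(\nu_1)$) the same a.e.\ order, the identity between them is a lattice isomorphism; composing with the one between $E$ and~$L_1(\nu_0)$ yields~(ii), with the separable space~$Z$.

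For (ii)$\impli$(i): starting from a separable Banach space~$X$ and $\nu_1\in {\rm ca}(\Sigma,X)$ with $E$ lattice-isomorphic to~$L_1(\nu_1)$, choose a sequence $(x_n^*)_{n\in\N}$ in $B_{X^*}$ with $\|x\|=\sup_{n\in\N}|x_n^*(x)|$ for all $x\in X$ (available since $X$ is separable) and let $j:X\to \ell_\infty$, $j(x):=(x_n^*(x))_{n\in\N}$, which is an isometric embedding. Put $\nu_0:=j\circ \nu_1\in {\rm ca}(\Sigma,\ell_\infty)$. Then $\mathcal{R}(\nu_0)\sub j(X)$ with $j(X)$ closed, so by ingredient~(1) $I_{\nu_0}(L_1(\nu_0))\sub j(X)$, and Proposition~\ref{proposition:Susumu} yields $\tilde\nu\in {\rm ca}(\Sigma,X)$ with $j\circ \tilde\nu=\nu_0=j\circ \nu_1$ and $L_1(\nu_0)=L_1(\tilde\nu)$ with equivalent norms; injectivity of~$j$ forces $\tilde\nu=\nu_1$, and as before the identity $L_1(\nu_0)\to L_1(\nu_1)$ is a lattice isomorphism, so~(i) holds. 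Finally, if $E$ is order continuous then, by Subsection~\ref{subsection:L1spaces}, the map $\nu:\Sigma\to E$, $\nu(A):=\chi_A$, belongs to~${\rm ca}(\Sigma,E)$ and $E=L_1(\nu)$; if in addition $E$ is separable this is precisely statement~(ii) with $X=E$, and (i) then follows from the equivalence just proved.

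The only step I expect to need care is checking that the integration operator of the $\ell_\infty$-valued measure lands in the relevant separable (resp.\ closed) subspace, since this is exactly what makes Proposition~\ref{proposition:Susumu} applicable; once that is in place the rest is routine bookkeeping about isometric embeddings and their adjoints, and there is no genuine obstacle.
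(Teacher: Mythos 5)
Your argument is correct and follows essentially the paper's route: both directions rest on the norm-separability of $\mathcal{R}(\nu_0)$ (Bartle--Dunford--Schwartz plus Rosenthal's theorem) in one direction and on an isometric embedding of the separable space $X$ into $\ell_\infty$ in the other, with the ``moreover'' part coming from the representation $E=L_1(\nu)$, $\nu(A)=\chi_A$, for order continuous~$E$. The paper merely asserts that the measure ``can be seen as'' taking values in the smaller (resp.\ larger) space; your detour through Proposition~\ref{proposition:Susumu}, together with the observations that $I_{\nu}(L_1(\nu))$ lies in the closed span of the range and that $j^*(B_{\ell_\infty^*})=B_{V^*}$ has the OT property for any isometric embedding~$j$, is a careful and valid way of justifying that the $L_1$ spaces are unchanged under this change of codomain.
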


In general, the separability of $\mathcal{R}(\nu)$ does not imply that the space $L_1(\nu)$ is separable, as witnessed
by the space $L_1(\lambda_I)$ of the usual measure on~$\{0,1\}^I$ for any uncountable set~$I$ (see Subsection~\ref{subsection:Cantor}). 

The following example provides a vector measure $\nu$ such that $\mathcal{R}(\nu)$ is separable,
$\overline{{\rm span}}(\mathcal{R}(\nu))$ is infinite-dimensional and $L_1(\nu)$ is neither separable nor
lattice-isomorphic to any AL-space.

\begin{example}\label{example:SusumuAugust}
\rm Let $G$ be any non-metrizable compact abelian group (e.g., the product $\{0,1\}^I$ or $\mathbb{T}^I$ for any uncountable set~$I$). By
$\mu$ we denote the Haar probability measure on the Borel $\sigma$-algebra $\mathcal{B}(G)$. Fix $g\in L_1(\mu)$ and
define $\nu:\mathcal{B}(G) \to L_\infty(\mu)$ by $\nu(A):=\chi_A * g$ (the convolution) for all $A\in \mathcal{B}(G)$. Then:
\begin{enumerate}
\item[(i)] $\nu \in {\rm ca}(\mathcal{B}(G),L_\infty(\mu))$; 
\item[(ii)] $\mathcal{R}(\nu)$ is relatively norm-compact (hence separable);
\item[(iii)] $L_1(\nu)$ is not separable; and
\item[(iv)] if, in addition, $g\not\in L_\infty(\mu)$, then $\overline{{\rm span}}(\mathcal{R}(\nu))$ is infinite-dimensional and $L_1(\nu)$ is not
lattice-isomorphic to any AL-space.
\end{enumerate}
Indeed, (i) and (iii) follow from parts (I) and (II)(iv) of \cite[Theorem~1]{oka-ric-2} (where $\nu$ is denoted by $m_g^{(\infty)}$).
Statement (ii) was noticed in \cite[Remark~2]{oka-ric-2}. For statement (iv), suppose that $g\not\in L_\infty(\mu)$. Then
$\nu$ has infinite variation, \cite[Theorem~2]{oka-ric-2}, and so $L_1(\nu)$ is not
lattice-isomorphic to any AL-space, \cite[Proposition~2]{cur2}. To prove that
$\overline{{\rm span}}(\mathcal{R}(\nu))$ is infinite-dimensional, assume by way of contradiction that 
$\overline{{\rm span}}(\mathcal{R}(\nu))$ is finite-dimensional. Then the integration operator $I_\nu: L_1(\nu) \to L_\infty(\mu)$ is
absolutely $1$-summing, because its range is contained in $\overline{{\rm span}}(\mathcal{R}(\nu))$.
Therefore, the restriction of $I_\nu$ to $L_\infty(\mu)$, which coincides with the convolution operator $C_g^{(\infty)}:L_\infty(\mu)\to L_\infty(\mu)$
(see \cite[Theorem~1(II)(ii)]{oka-ric-2}), is absolutely $1$-summing as well. This contradicts the fact that $g\not\in L_\infty(\mu)$, 
\cite[Theorem~2]{oka-ric-2}.
\end{example}

This section is devoted to proving the following:

\begin{theorem}\label{theorem:countable-norming-general}
Let $\Sigma$ be a $\sigma$-algebra, let $X$ be a separable Banach space and let $\nu \in {\rm ca}(\Sigma,X)$. 
If $L_1(\nu)^*$ is order continuous, then $L_1(\nu)$ is separable. 
\end{theorem}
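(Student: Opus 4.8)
The plan is to derive Theorem~\ref{theorem:countable-norming-general} from the more general Theorem~\ref{theorem:general} (the one advertised in the Introduction: a Banach lattice $E$ with both $E$ and $E^*$ order continuous and admitting a countable positively norming set is separable). So the real work is in constructing a \emph{countable positively norming set} for $E = L_1(\nu)$ out of the separability of $X$, and then checking the order continuity hypotheses. First I would recall that $L_1(\nu)$ is always an order continuous Banach lattice (stated in Subsection~\ref{subsection:L1spaces}), so the hypothesis ``$E$ order continuous'' is automatic. The hypothesis ``$E^*$ order continuous'' is exactly what we are assuming. Thus the only thing to verify before invoking Theorem~\ref{theorem:general} is the existence of a countable positively norming subset of $B_{L_1(\nu)^*}$.

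To produce that set, I would use the explicit description of the norm on $L_1(\nu)$, namely $\|f\|_{L_1(\nu)} = \sup_{x^*\in B_{X^*}} \int_\Omega |f|\, d|x^*\circ\nu|$. Since $X$ is separable, $B_{X^*}$ is $w^*$-separable; fix a countable $w^*$-dense set $\{x_k^*:k\in\N\}\subseteq B_{X^*}$. A standard approximation argument (using that $\|f\|_{L_1(\nu)}$ can be computed by testing against a countable $w^*$-dense family, since $x^*\mapsto \int|f|\,d|x^*\circ\nu|$ behaves well under $w^*$-approximation on the bounded set $B_{X^*}$) shows $\|f\|_{L_1(\nu)} = \sup_k \int_\Omega |f|\, d|x_k^*\circ\nu|$. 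Now, for each $k$, the measure $|x_k^*\circ\nu|$ is a finite non-negative measure absolutely continuous with respect to a fixed Rybakov control measure $\mu$, so it has a Radon--Nikod\'ym density $h_k := \frac{d|x_k^*\circ\nu|}{d\mu}\in L_1(\mu)^+$, and $\int_\Omega |f|\, d|x_k^*\circ\nu| = \int_\Omega |f|\, h_k \, d\mu$. Because $h_k\in (L_1(\nu))' \subseteq L_1(\mu)$ (each $h_k$ pairs integrably against every $f\in L_1(\nu)$), the functional $\varphi_{h_k}(f) = \int_\Omega f h_k\, d\mu$ lies in $L_1(\nu)^*$, is positive, and satisfies $\varphi_{h_k}(|f|) = \int|f|h_k\,d\mu$. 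After normalizing (replacing $h_k$ by $h_k/\|\varphi_{h_k}\|$, discarding the zero ones) we obtain a countable family $\{\psi_k\}\subseteq B_{L_1(\nu)^*}^+$ with $\sup_k \psi_k(|f|) = \sup_k \frac{1}{\|\varphi_{h_k}\|}\int|f|h_k\,d\mu \geq c \sup_k \int|f|h_k\,d\mu = c\|f\|_{L_1(\nu)}$ for the constant $c = 1/\sup_k\|\varphi_{h_k}\|>0$ (the norms $\|\varphi_{h_k}\|\le \|I_\nu\|\cdot\text{const}$ are uniformly bounded since $\|\varphi_{h_k}\| = \||x_k^*\circ\nu|\|$-type quantity is controlled by the semivariation of $\nu$). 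This exhibits $\{\psi_k:k\in\N\}$ as a countable positively norming set.

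With all three hypotheses of Theorem~\ref{theorem:general} verified for $E = L_1(\nu)$ — order continuity of $E$ (automatic), order continuity of $E^*$ (assumed), and a countable positively norming set (just constructed) — the conclusion is that $L_1(\nu)$ is separable, which is exactly Theorem~\ref{theorem:countable-norming-general}.

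The main obstacle I anticipate is the step asserting $\|f\|_{L_1(\nu)} = \sup_k \int_\Omega|f|\,d|x_k^*\circ\nu|$ for a countable $w^*$-dense $\{x_k^*\}$: the map $x^*\mapsto |x^*\circ\nu|$ (variation) is not $w^*$-continuous in general, so one cannot naively pass to the supremum over a $w^*$-dense set. I would handle this by working instead with the equivalent description of the $L_1(\nu)$-norm in terms of the integration of simple functions, or by using that for fixed $f$ and fixed measurable $A$ the value $(x^*\circ\nu)(A) = x^*(\nu(A))$ \emph{is} $w^*$-continuous in $x^*$, approximating $\int_\Omega|f|\,d|x^*\circ\nu| = \sup\{\sum_i |x^*(\nu(A_i))| : (A_i) \text{ finite measurable partition}\}$, and noting each inner quantity is a $w^*$-continuous (indeed continuous lattice) function of $x^*$, hence its supremum over $B_{X^*}$ equals its supremum over any $w^*$-dense subset. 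Taking the supremum over partitions afterwards (a monotone operation) then yields the claimed equality, completing the argument.
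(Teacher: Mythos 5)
Your proposal is correct and takes essentially the same route as the paper: the paper isolates your construction as Lemma~\ref{lemma:SeparableRangeImpliesCountablePN}, taking a $w^*$-dense sequence $(x_n^*)$ in $B_{X^*}$, forming the Radon--Nikod\'{y}m derivatives of $|x_n^*\circ\nu|$ with respect to a Rybakov control measure, and quoting \cite[Lemma~2.2]{rod16} for the key equality $\|f\|_{L_1(\nu)}=\sup_n\int_\Omega|f|\,d|x_n^*\circ\nu|$, which you instead justify by hand via the partition description of the variation, before invoking Theorem~\ref{theorem:general} exactly as you do. The only slip is notational: in your displayed supremum the inner quantity should be $\sum_i\bigl|x^*\bigl(\int_{A_i}|f|\,d\nu\bigr)\bigr|$ rather than $\sum_i|x^*(\nu(A_i))|$ (the latter computes $|x^*\circ\nu|(\Omega)$, not $\int_\Omega|f|\,d|x^*\circ\nu|$), but your surrounding argument makes clear this is what is intended and each such quantity is indeed $w^*$-continuous in $x^*$.
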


We will obtain Theorem~\ref{theorem:countable-norming-general} as a consequence of a more general approach dealing with
the concept of positively norming set introduced in~\cite{san-tra}:

\begin{definition}\label{definition:positivelynorming}
Let $E$ be a Banach lattice. A set $B \sub B_{E^*}\cap (E^*)^+$
is said to be {\em positively norming} if there is a constant $c>0$ such that
$$
	\|x\|_E \leq c \sup_{\varphi \in B}\varphi(|x|) \quad\text{for every $x\in E$}.
$$
\end{definition}

\begin{lemma}\label{lemma:SeparableRangeImpliesCountablePN}
Let $(\Omega,\Sigma)$ be a measurable space, let $X$ be a separable Banach space and let $\nu \in {\rm ca}(\Sigma,X)$. 
Then $L_1(\nu)$ admits a countable positively norming set.
\end{lemma}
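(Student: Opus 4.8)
The plan is to produce an explicit countable family in $B_{L_1(\nu)^*}\cap(L_1(\nu)^*)^+$ and show it is positively norming. Recall that if $\mu$ is a Rybakov control measure of $\nu$, then $L_1(\nu)$ is a Banach function space over $(\Omega,\Sigma,\mu)$, and its norm is $\|f\|_{L_1(\nu)}=\sup_{x^*\in B_{X^*}}\int_\Omega|f|\,d|x^*\circ\nu|$. First I would observe that, since $X$ is separable, $B_{X^*}$ is $w^*$-metrizable and hence $w^*$-separable; fix a countable $w^*$-dense set $\{x_k^*:k\in\N\}\sub B_{X^*}$. The first claim is that the supremum in the norm formula can be computed over this countable set, i.e.\ $\|f\|_{L_1(\nu)}=\sup_{k\in\N}\int_\Omega|f|\,d|x_k^*\circ\nu|$ for every $f\in L_1(\nu)$. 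This follows because the map $x^*\mapsto |x^*\circ\nu|(A)$ is $w^*$-lower semicontinuous for each fixed $A\in\Sigma$ (being a supremum over finite partitions of $A$ of sums of $w^*$-continuous functions $|x^*(\nu(A_i))|$), hence $x^*\mapsto\int_\Omega|f|\,d|x^*\circ\nu|$ is $w^*$-lower semicontinuous for simple $f$, and then for general $f\in L_1(\nu)$ by monotone approximation; lower semicontinuity together with $w^*$-density gives that the supremum over $\{x_k^*\}$ already equals the supremum over all of $B_{X^*}$.

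Next I would convert each $|x_k^*\circ\nu|$ into a positive functional on $L_1(\nu)$. Since $\nu$ is countably additive and $\mu$ is a Rybakov control measure, each $|x_k^*\circ\nu|$ is a finite non-negative measure absolutely continuous with respect to $\mu$, so by Radon--Nikod\'{y}m there is $0\le g_k\in L_1(\mu)$ with $d|x_k^*\circ\nu|=g_k\,d\mu$. Moreover $\int_\Omega|f|\,d|x_k^*\circ\nu|=\int_\Omega|f|\,g_k\,d\mu\le\|f\|_{L_1(\nu)}$ for all $f\in L_1(\nu)$, which shows $g_k\in L_1(\nu)'$ (the K\"{o}the dual) and that the associated functional $\varphi_k:=\varphi_{g_k}\in L_1(\nu)^*$, given by $\varphi_k(f)=\int_\Omega fg_k\,d\mu$, has norm at most $1$ and is positive. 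Then $\{\varphi_k:k\in\N\}\sub B_{L_1(\nu)^*}\cap(L_1(\nu)^*)^+$ is countable, and by the first claim
$$
\|f\|_{L_1(\nu)}=\sup_{k\in\N}\int_\Omega|f|\,g_k\,d\mu=\sup_{k\in\N}\varphi_k(|f|)\qquad\text{for all }f\in L_1(\nu),
$$
so this set is positively norming with constant $c=1$.

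The main obstacle is the first claim — justifying that the supremum over the countable $w^*$-dense set recovers the full norm. The subtlety is that $x^*\mapsto\int_\Omega|f|\,d|x^*\circ\nu|$ need not be $w^*$-continuous, only $w^*$-lower semicontinuous, so one must be careful that density in the $w^*$-topology suffices; this is exactly why lower semicontinuity (rather than continuity) is the right property, and it is enough because for a $w^*$-lsc function $\Phi$ on $B_{X^*}$ and a $w^*$-dense subset $D$ one has $\sup_D\Phi=\sup_{B_{X^*}}\Phi$. Establishing the lower semicontinuity itself is routine: for a $\Sigma$-simple $f=\sum_i a_i\chi_{A_i}$ with the $A_i$ disjoint, $\int_\Omega|f|\,d|x^*\circ\nu|=\sum_i|a_i|\,|x^*\circ\nu|(A_i)$, and each $|x^*\circ\nu|(A_i)=\sup\{\sum_j|x^*(\nu(B_j))|\}$ over finite $\Sigma$-partitions $(B_j)$ of $A_i$ is a supremum of $w^*$-continuous functions, hence $w^*$-lsc; a supremum of finitely many, and then a monotone increasing limit, of $w^*$-lsc functions is $w^*$-lsc, which handles arbitrary $f\in L_1(\nu)$ via simple functions increasing to $|f|$. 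Everything else (Radon--Nikod\'{y}m, the norm estimate placing $g_k$ in the K\"{o}the dual) is standard.
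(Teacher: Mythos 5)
Your proposal is correct and follows essentially the same route as the paper: fix a Rybakov control measure, take a countable $w^*$-dense subset of~$B_{X^*}$, pass to the Radon--Nikod\'{y}m derivatives $g_k$ of the measures $|x_k^*\circ\nu|$, and observe that the associated positive functionals $\varphi_{g_k}$ norm $L_1(\nu)$ with constant~$1$. The only difference is that where the paper invokes \cite[Lemma~2.2]{rod16} for the identity $\|f\|_{L_1(\nu)}=\sup_k\int_\Omega|f|\,d|x_k^*\circ\nu|$, you supply a correct self-contained proof via $w^*$-lower semicontinuity of $x^*\mapsto\int_\Omega|f|\,d|x^*\circ\nu|$.
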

\begin{proof}
Let $\mu$ be a Rybakov control measure of~$\nu$, so that $L_1(\nu)$ is a Banach function space over $(\Omega,\Sigma,\mu)$. 
Since $X$ is separable, $B_{X^*}$ is $w^*$-separable and so we can take a $w^*$-dense sequence $(x_n^*)_{n\in \N}$ in~$B_{X^*}$. For each $n\in \N$,
the measure $|x_n^*\circ \nu|$ is $\mu$-continuous and we consider its Radon-Nikod\'{y}m derivative $g_n:=\frac{d|x_n^*\circ \nu|}{d\mu} \in (L_1(\nu))'$ 
and the associated functional $\varphi_{g_n} \in B_{L_1(\nu)^*}\cap (L_1(\nu)^*)^+$ (see Subsections~\ref{subsection:Bfs} and~\ref{subsection:L1spaces}). 
Since $(x_n^*)_{n\in \N}$ is $w^*$-dense in~$B_{X^*}$, for each $f\in L_1(\nu)$ we 
can apply \cite[Lemma~2.2]{rod16} to get
$$
	\|f\|_{L_1(\nu)}
	=\sup_{n\in \N}\, \int_\Omega |f|\, d|x_n^*\circ \nu|
	=\sup_{n\in \N}\, \varphi_{g_n}(|f|),
$$
that is, $\{\varphi_{g_n}:n\in \N\}$ is positively norming.
\end{proof}

It is now clear that Theorem~\ref{theorem:countable-norming-general} will be an immediate consequence of the following:

\begin{theorem}\label{theorem:general}
Let $E$ be a Banach lattice such that both~$E$ and $E^*$ are order continuous. If 
$E$ admits a countable positively norming set, then $E$ is separable. 
\end{theorem}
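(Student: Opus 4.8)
The plan is to exploit the duality between the countable positively norming set and order continuity on both sides to produce a countable order-dense subset of $E$, which for an order continuous Banach lattice forces separability. Let $B=\{\varphi_n:n\in\N\}\subseteq B_{E^*}\cap(E^*)^+$ be the given countable positively norming set, with constant $c>0$, so that $\|x\|_E\le c\sup_n\varphi_n(|x|)$ for all $x\in E$. Since $E^*$ is order continuous, $E$ may be represented as a Banach function space over a finite measure space: indeed, $E$ order continuous with a weak order unit would be the standard route, but we do not have a weak order unit for free; instead I would use that an order continuous Banach lattice is an (unconditional) direct sum of ideals each with a weak order unit, and reduce to the case with a weak order unit, where $E=L_1(\nu)$ for a vector measure $\nu$ by the discussion in Subsection~\ref{subsection:L1spaces}. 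Alternatively, and more cleanly, I would represent $E$ as an order dense ideal of some $L_1(\mu)$ via the functionals $\varphi_n$: set $\mu:=\sum_n 2^{-n}\varphi_n$, which (using order continuity of $E^*$, so each $\varphi_n$ is given by integration against an element of a Köthe dual, cf. Subsection~\ref{subsection:Bfs}) makes sense as a strictly positive functional on $E$ if $B$ is rich enough; positively norming guarantees strict positivity of $\sup_n\varphi_n$ on $E^+\setminus\{0\}$, hence of $\mu$.

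The key steps, in order, would be: (1) reduce to the case where $E$ has a weak order unit $e$, by writing $E$ as an order direct sum of principal bands $E_\alpha=\overline{I_{e_\alpha}}$ and observing that a countable positively norming set can meet only countably many nontrivial bands (because for each $x$ in a band, some $\varphi_n$ is nonzero on it, and there are only countably many $\varphi_n$), so $E$ itself has a countable order basis and we may assume a single weak order unit; (2) with a weak order unit $e$, realize $E$ as a Banach function space over $(\Omega,\Sigma,\mu)$ with $\mu$ finite, as recalled in the excerpt, so $E\hookrightarrow L_1(\mu)$ continuously; (3) since $E^*$ is order continuous, every $\varphi_n$ is of the form $\varphi_{g_n}$ for some $g_n\in E'\subseteq L_1(\mu)$; (4) show that $\mu$ (suitably chosen, e.g.\ as $\sum_n 2^{-n}g_n\,d\mu_0$ for an auxiliary control measure $\mu_0$, or directly via a Rybakov-type argument) can be taken so that the positively norming inequality becomes $\|x\|_E\le c\sup_n\int_\Omega|x|\,g_n\,d\mu$; (5) deduce that the measure algebra $\Sigma/\mathcal N(\mu)$ is countably generated: a set $A$ of positive measure can be distinguished by some $g_n$, and countably many $g_n$ together with $\mu$ separate points of the algebra, so by a standard argument $L_1(\mu)$ is separable; (6) finally, since $E$ is order continuous, its norm topology agrees on order bounded sets with convergence in measure, and the simple functions (which are $\mu$-measure dense, hence $E$-norm dense by order continuity on the order interval $[0,e]$ and a truncation argument) form a separable dense set — more precisely, $E$ order continuous and $L_1(\mu)$ separable with $\Sigma$ essentially countably generated gives that the countable set of rational simple functions over a countable generating subalgebra is $\|\cdot\|_E$-dense in $E$.

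The main obstacle I expect is step (5)–(6): turning ``the measure algebra is separated by countably many functionals'' into ``$\Sigma/\mathcal N(\mu)$ is countably generated'' and then into norm-separability of $E$. The subtlety is that a countable family $\{g_n\}$ being positively norming controls the $E$-norm but a priori only gives that the $g_n$ are jointly ``faithful'', not that the $\sigma$-algebra they generate is all of $\Sigma$ mod null sets; one must use order continuity of $E$ crucially here to pass from the sub-$\sigma$-algebra generated by the $g_n$ back to $E$. I would handle this by showing that the band generated in $E$ by $\{g_n\}$ (viewed inside $E'$ or via conditional expectations) is all of $E'$, using that if some $A\in\Sigma$ with $\mu(A)>0$ were $\varphi_n$-null for every $n$ then $\varphi_n(\chi_A)=0$ for all $n$, contradicting the positively norming inequality applied to $x=\chi_A\in E$ (note $\chi_A\in E$ since $E$ contains the characteristic functions as a Banach function space). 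Hence the $g_n$ generate, up to null sets, the whole algebra, $\Sigma/\mathcal N(\mu)$ is countably generated, $L_1(\mu)$ is separable, and order continuity of $E$ upgrades this to separability of $E$ via density of simple functions and the lattice inequality $\|f-s\|_E\le \|\,|f-s|\wedge(n^{-1}e)\,\|_E + \|\,(|f|-n^{-1}e)^+\,\|_E$ with the second term small by order continuity.
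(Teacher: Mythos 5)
Your overall architecture (decompose $E$ into bands with weak order units, represent each as a Banach function space over a finite measure space, write $\varphi_n=\varphi_{g_n}$ with $g_n\in E'$) matches the paper's, but the heart of your argument --- steps (5)--(6) --- contains a genuine gap. You claim that because every $A\in\Sigma\setminus\mathcal N(\mu)$ satisfies $\varphi_n(\chi_A)>0$ for some $n$, ``the $g_n$ generate, up to null sets, the whole algebra, $\Sigma/\mathcal N(\mu)$ is countably generated, $L_1(\mu)$ is separable.'' This implication is false. Joint faithfulness of a countable family is far weaker than countable generation of the measure algebra: take $\mu=\lambda_I$ the usual measure on $\{0,1\}^I$ with $I$ uncountable, $E=L_1(\lambda_I)$, and the single functional $\varphi_1(f)=\int f\,d\lambda_I$ (so $g_1=\chi_\Omega$). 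Then $\{\varphi_1\}$ is positively norming (indeed $\|f\|_{L_1}=\varphi_1(|f|)$), every non-null $A$ satisfies $\varphi_1(\chi_A)>0$, $E$ is order continuous, yet $L_1(\lambda_I)$ is not separable and its measure algebra is not countably generated. Since this example satisfies every hypothesis you actually invoke in steps (5)--(6) --- note that $\varphi_n=\varphi_{g_n}$ already follows from order continuity of $E$ itself, not of $E^*$ --- your argument there cannot be correct: the hypothesis that $E^*$ is order continuous (which fails for $L_1(\lambda_I)^*=L_\infty(\lambda_I)$) is never brought to bear at the point where it is indispensable.

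The paper's Proposition~\ref{proposition:Bfs} closes exactly this gap by a quite different mechanism. Assuming $E$ non-separable, so $L_1(\mu)$ non-separable, Maharam's theorem produces a set $A$ on which $\mu_A$ is measure-algebra isomorphic to $\lambda_I$ for uncountable $I$; since each $g_n$ depends (mod null sets) on only countably many coordinates, one finds a disjoint sequence $(A_m)$ of non-null sets that is stochastically independent of all the $g_n$ (Lemma~\ref{lemma:Maharam}). On ${\rm span}(\{\chi_{A_m}\})$ the positively norming inequality then collapses the $E$-norm to an equivalent $L_1(\mu)$-norm, so the normalized $\chi_{A_m}$ form a disjoint positive sequence equivalent to the $\ell_1$-basis --- and \emph{this} is what contradicts order continuity of $E^*$ (Theorem~\ref{theorem:AB}). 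You would need some argument of this kind; there is no ``standard argument'' deducing separability of $L_1(\mu)$ from a countable faithful family. A secondary, fixable issue: in your step (1), the parenthetical reason that only countably many bands are met (``some $\varphi_n$ is nonzero on each, and there are only countably many $\varphi_n$'') does not suffice, since a single positive functional can be nonzero on uncountably many disjoint bands (the summing functional on $\ell_1(\Gamma)$); one must again invoke order continuity of $E^*$, as the paper does via Lemma~\ref{lemma:DF} (or directly, by noting that normalized disjoint positive vectors on which a fixed positive functional is bounded below form an $\ell_1$-basis).
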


The proof of Theorem~\ref{theorem:general} requires some previous work. 
Proposition~\ref{proposition:Bfs} below presents a special case when $E$ is a Banach function space and will be used to prove Theorem~\ref{theorem:general}.
 
\begin{lemma}\label{lemma:separable}
Let $E$ be a Banach function space over a finite measure space~$(\Omega,\Sigma,\mu)$. 
\begin{enumerate}
\item[(i)] If $E$ is separable, then $L_1(\mu)$ is separable.
\item[(ii)] If $L_1(\mu)$ is separable and $E$ is order continuous, then $E$ is separable. 
\end{enumerate}
\end{lemma}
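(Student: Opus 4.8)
The plan is to prove (i) by a soft density argument and (ii) by combining order continuity of~$E$ with separability of the measure algebra of~$\mu$.

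For (i): since $E$ is a linear subspace of $L_1(\mu)$ containing $\chi_A$ for every $A\in\Sigma$, it contains every $\Sigma$-simple function, and these are dense in~$L_1(\mu)$. As the inclusion $j\colon E\to L_1(\mu)$ is an operator (this is part of the definition of a Banach function space), $j(E)$ is a dense subset of~$L_1(\mu)$; being a continuous image of the separable space~$E$, it is separable, and hence so is its closure~$L_1(\mu)$.

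For (ii) I would proceed in three steps. First, using order continuity of~$E$, show that the $\Sigma$-simple functions are dense in~$E$: given $f\in E^+$, pick $\Sigma$-simple functions $s_n$ with $0\le s_n\uparrow f$ $\mu$-a.e.; then $f-s_n\downarrow 0$ in the order of~$E$ (the $\mu$-a.e.\ infimum is~$0$ and $E$ is an ideal of $L_0(\mu)$, so this is also the infimum in~$E$), whence $\|f-s_n\|_E\to 0$ by order continuity (and for general $f\in E$ one splits $f=f^+-f^-$). Second, since $A\mapsto \chi_A$ is an isometry of the measure algebra $\Sigma/\mathcal{N}(\mu)$, metrized by $(A^\bullet,B^\bullet)\mapsto \mu(A\triangle B)$, into~$L_1(\mu)$, separability of~$L_1(\mu)$ yields a countable family $\mathcal{D}\subseteq\Sigma$ such that $\{D^\bullet:D\in\mathcal{D}\}$ is dense in $\Sigma/\mathcal{N}(\mu)$. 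Third, show that every $\chi_A$ ($A\in\Sigma$) lies in the $E$-norm closure of $\{\chi_D:D\in\mathcal{D}\}$: choose $D_k\in\mathcal{D}$ with $\mu(A\triangle D_k)<2^{-k}$ and set $B_m:=\bigcup_{k\ge m}(A\triangle D_k)$; then $B_m\downarrow$ to a $\mu$-null set, so $\chi_{B_m}\downarrow 0$ $\mu$-a.e., and since $0\le \chi_{B_m}\le \chi_\Omega\in E$, order continuity gives $\|\chi_{B_m}\|_E\to 0$; as $|\chi_{D_k}-\chi_A|\le \chi_{B_m}$ for all $k\ge m$, we conclude $\|\chi_{D_k}-\chi_A\|_E\to 0$. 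Putting the steps together, the countable set of finite $\mathbb{Q}$-linear combinations of elements of $\{\chi_D:D\in\mathcal{D}\}$ has $E$-norm closure containing all $\Sigma$-simple functions (third step plus linearity), hence all of~$E$ (first step); therefore $E$ is separable.

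The main obstacle is the repeated passage from $\mu$-a.e.\ monotone convergence to $E$-norm convergence, which is exactly the point at which order continuity of~$E$ is used; the delicate issue is to check, in each instance, that the monotone sequence in question actually lies in~$E$ and that its pointwise infimum coincides with the lattice infimum computed in~$E$, so that order continuity genuinely applies. Everything else — the construction of the $s_n$, the Borel--Cantelli-type estimate for the sets~$B_m$, and the final bookkeeping with rational coefficients — is routine.
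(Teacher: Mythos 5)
Your proof is correct and follows essentially the same route as the paper: part (i) via density of the simple functions in $L_1(\mu)$ and continuity of the inclusion, and part (ii) by combining density of the simple functions in the order continuous space~$E$ with a countable family that is dense in the measure algebra. The only difference is that the paper cites \cite[Remark~2.6]{oka-alt} and \cite[Lemma~2.37(ii)]{oka-alt} for the two order-continuity steps, whereas you prove them directly; your verifications (that the pointwise infima coincide with the lattice infima in~$E$ because $E$ is an ideal, and the $B_m:=\bigcup_{k\ge m}(A\triangle D_k)$ device) are sound.
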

\begin{proof}
(i) If $E$ is separable, then so is $S:={\rm span}(\{\chi_A:A\in \Sigma\}) \sub E$. Since the identity map $\iota: E \to L_1(\mu)$ is an operator,
the set $\iota(S)$ is separable. Since $\iota(S)$ is dense in~$L_1(\mu)$, we conclude that $L_1(\mu)$ is separable.

(ii) Since $E$ is order continuous, $S$ is dense in~$E$ (see, e.g., \cite[Remark~2.6]{oka-alt}). So, it suffices
to check that $\{\chi_A:A\in \Sigma\}$ is separable as a subset of~$E$. Now, since $L_1(\mu)$ is separable, there is a sequence $(A_n)_{n\in \N}$
in~$\Sigma$ such that 
$$
	\inf_{n\in \N} \mu(A_n\triangle A)=0 \quad \text{for every $A\in \Sigma$}.
$$
Therefore, the order continuity of~$E$ implies that the set $\{\chi_{A_n}:n\in \N\}$ is dense in $\{\chi_A:A \in \Sigma\}$
as subsets of~$E$ (see, e.g., \cite[Lemma~2.37(ii)]{oka-alt}).
\end{proof}

Given a finite measure space $(\Omega,\Sigma,\mu)$ and $A\in \Sigma \setminus \mathcal{N}(\mu)$, 
we define
$$
	\Sigma_A:=\{B\in \Sigma: \, B \sub A\}
	\quad\mbox{and}\quad
	\mu_A(B):=\mu(A)^{-1} \mu(B) \quad \text{for every $B\in \Sigma_A$,}
$$ 
so that $\mu_A$ is a probability measure on the measurable space $(A,\Sigma_A)$.

\begin{lemma}\label{lemma:Maharam}
Let $(\Omega,\Sigma,\mu)$ be a finite measure space such that $L_1(\mu)$ is not separable 
and let $(g_n)_{n\in \N}$ be a sequence in~$L_1(\mu)$. Then there exist $A\in \Sigma\setminus \mathcal{N}(\mu)$ and 
a sequence $(A_m)_{m\in \N}$ of pairwise disjoint sets of~$\Sigma_A\setminus \mathcal{N}(\mu)$ such that
$$
	\int_{A} f g_n \, d\mu_A=\left(\int_{A} f\, d\mu_A\right) \left(\int_{A} g_n \, d\mu_A\right)
$$
for every $f\in {\rm span}(\{\chi_{A_m}:m\in \N\})$ and for every $n\in \N$.
\end{lemma}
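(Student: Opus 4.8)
The plan is to exploit the non-separability of $L_1(\mu)$ via Maharam-type structure theory together with a diagonal/measure-algebra argument. The underlying intuition: if $L_1(\mu)$ is non-separable, then the measure algebra $\Sigma/\mathcal{N}(\mu)$ must contain, after localizing to a suitable piece $A\in\Sigma\setminus\mathcal{N}(\mu)$, a copy of the measure algebra of $\{0,1\}^{\omega_1}$ (by Maharam's theorem, since the Maharam type of $\mu$ localized somewhere is uncountable — see \cite[\S332]{fre14} or \cite[\S254]{freMT-2}). On such a product algebra one has an abundance of ``new'' independent sets; the goal is to produce a disjoint sequence $(A_m)_{m\in\N}$ in $\Sigma_A$ that is \emph{stochastically independent} of the countably many functions $g_n$ with respect to $\mu_A$, which is exactly what the displayed factorization identity $\int_A fg_n\,d\mu_A = (\int_A f\,d\mu_A)(\int_A g_n\,d\mu_A)$ asserts for $f$ a characteristic function $\chi_{A_m}$, and hence (by linearity in $f$) for $f\in{\rm span}(\{\chi_{A_m}:m\in\N\})$.

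The key steps, in order, are as follows. First, reduce to the case where the localization has already been performed: using that $L_1(\mu)$ is non-separable, find $A_0\in\Sigma\setminus\mathcal{N}(\mu)$ such that $L_1(\mu_{A_0})$ has uncountable Maharam type and, by Maharam's homogeneity theorem, its measure algebra is isomorphic (as a measure algebra) to that of the usual measure $\lambda_I$ on $\{0,1\}^I$ with $I$ uncountable — we only need $|I|\ge\aleph_1$, so we may take $I=\omega_1$ by passing to a further subset/quotient. Transport the problem through this isomorphism (using the associated lattice isometry $\Phi$ from Subsection~\ref{subsection:MeasureAlgebras}), so without loss of generality we work with $\lambda_{\omega_1}$ on $\{0,1\}^{\omega_1}$ and a sequence $(g_n)_{n\in\N}$ in $L_1(\lambda_{\omega_1})$. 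Second, observe that each $g_n$, being $\Sigma_{\omega_1}$-measurable, is (up to $\lambda_{\omega_1}$-null sets) measurable with respect to the sub-$\sigma$-algebra generated by countably many coordinates; since $\N$ is countable, all the $g_n$ together are measurable with respect to the $\sigma$-algebra $\mathcal{C}$ generated by some countable set $C\subseteq\omega_1$ of coordinates. Third, pick any coordinate $i_0\in\omega_1\setminus C$ (possible since $\omega_1$ is uncountable while $C$ is countable), and set $A:=\pi_{i_0}^{-1}(\{1\})$, which has $\lambda_{\omega_1}(A)=\tfrac12>0$. Inside $A$, use further coordinates outside $C\cup\{i_0\}$ to build the disjoint sequence: e.g. pick distinct $j_1,j_2,\ldots\in\omega_1\setminus(C\cup\{i_0\})$ and define a disjoint sequence of positive-measure subsets of $A$ by carving $A$ along the coordinates $j_1,j_2,\ldots$ (for instance $A_m := A\cap\pi_{j_m}^{-1}(\{1\})\cap\bigcap_{k<m}\pi_{j_k}^{-1}(\{0\})$, giving $\lambda_{\omega_1}(A_m)=2^{-(m+1)}>0$ and pairwise disjointness). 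Fourth, verify the factorization: each $\chi_{A_m}$ is measurable with respect to coordinates in $\{i_0,j_1,\ldots,j_m\}$, which is disjoint from $C$, and under a product measure functions depending on disjoint coordinate blocks are independent; hence $\int_A \chi_{A_m} g_n\,d\mu_A=(\int_A\chi_{A_m}\,d\mu_A)(\int_A g_n\,d\mu_A)$ for all $m,n$. Extend to $f\in{\rm span}(\{\chi_{A_m}:m\in\N\})$ by linearity in $f$ on both sides. Finally, pull the sets $A$ and $A_m$ back through the measure-algebra isomorphism and the lattice isometry to get the corresponding sets in the original $\Sigma$; the isometry $\Phi$ respects multiplication by characteristic functions and integrals (Subsection~\ref{subsection:MeasureAlgebras}), so the factorization identity survives the transport, and non-nullity is preserved since $\Phi$ is a lattice isometry induced by a Boolean isomorphism.

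The main obstacle I anticipate is the clean invocation and transport of Maharam's theorem: one must argue carefully that non-separability of $L_1(\mu)$ forces uncountable Maharam type on \emph{some} piece $A_0$ (the global algebra might be a countable convex combination of homogeneous pieces of various types, only some of which are uncountable — but non-separability guarantees at least one uncountable one), and then that the measure-algebra isomorphism onto (a piece of) $\{0,1\}^I$ together with $\Phi$ genuinely transports the identity $\int fg_n = (\int f)(\int g_n)$ back and forth, including the correct normalization to probability measures $\mu_A$ and $\lambda_{\omega_1}$-restrictions. A secondary technical point is the ``countably many coordinates'' reduction for the $g_n$: this is standard (each integrable function on a product measure is a.e. a function of countably many coordinates — see \cite[\S254]{freMT-2}), but it should be cited rather than reproved. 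Everything after that — building the disjoint independent sequence and checking independence under a product measure — is routine once the structural reduction is in place.
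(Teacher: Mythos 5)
Your proposal is correct and follows essentially the same route as the paper: Maharam's theorem to localize onto a piece whose measure algebra is that of $\{0,1\}^I$ with $I$ uncountable, the observation that the transported $g_n$ depend (a.e.) on only countably many coordinates, a disjoint sequence built from fresh coordinates so that independence/Fubini gives the product identity, and transport back through the lattice isometry~$\Phi$. The only (harmless) deviations are cosmetic: the extra restriction to the half-cube $\pi_{i_0}^{-1}(\{1\})$ and the reduction to $I=\omega_1$ are unnecessary, since the argument works directly on the whole homogeneous piece with any uncountable~$I$, which is what the paper does.
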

\begin{proof}
 Since $L_1(\mu)$ is not separable, 
Maharam's theorem (see, e.g., \cite[Section~3]{fre14} or \cite[\S14]{lac-J}) ensures the existence of
$A\in \Sigma\setminus \mathcal{N}(\mu)$ such that the measure algebra of~$\mu_A$ is 
isomorphic to the measure algebra of the usual measure $\lambda_I$ on~$\{0,1\}^I$, for some uncountable set~$I$.
Therefore, there is a lattice isometry $\Phi: L_1(\mu_A) \to L_1(\lambda_I)$ satisfying 
\begin{equation}\label{eqn:Xi0}
	\int_A v \, d\mu_A =\int_{\{0,1\}^I} \Phi(v) \, d\lambda_I
	\quad \text{for every $v\in L_1(\mu_A)$}
\end{equation}
and
\begin{equation}\label{eqn:Xi}
	\Phi(uv)=\Phi(u) \Phi(v) 
	\quad \text{whenever $u\in {\rm span}(\{\chi_B:B\in \Sigma_A\})$ and $v\in L_1(\mu_A)$} 
\end{equation}
(see Subsection~\ref{subsection:MeasureAlgebras}). 

We denote by 
$$
	\rho_{J'J}:\{0,1\}^{J'} \to \{0,1\}^{J}
$$ 
the canonical projection for any $J \sub J' \sub I$.

For each $n\in \N$ we have $\Phi(g_n|_A) \in L_1(\lambda_I)$ and so there exist a countable set $I_n \sub I$ and $h_n\in L_1(\lambda_{I_n})$
such that $\Phi(g_n|_A)=h_n\circ \rho_{I I_n}$ (see, e.g., \cite[254Q]{freMT-2}). 
Then the set $I':=\bigcup_{n\in \N}I_n$ is countable and for each $n\in \N$ we have 
\begin{equation}\label{eqn:rhoII}
	\Phi(g_n|_A)=\widetilde{h}_n \circ \rho_{I I'},
\end{equation}
where $\widetilde{h}_n:=h_n\circ \rho_{I' I_n} \in L_1(\lambda_{I'})$. Note that the set $J:=I \setminus I'$ is uncountable and, in particular, infinite. So, we can find a sequence
$(B_m)_{m\in \N}$ of pairwise disjoint elements of~$\Sigma_J\setminus \mathcal{N}({\lambda_J})$. Define 
$$
	C_m:= B_m \times \{0,1\}^{I'} \in \Sigma_I
	\quad \text{for all $m\in \N$}, 
$$
so that
the $C_m$'s are pairwise disjoint with $\lambda_I(C_m)=\lambda_J(B_m)$.

{\em Claim.} For every $h\in {\rm span}(\{\chi_{C_m}:m\in \N\})$ and for every $n\in \N$ we have
\begin{equation}\label{eqn:independence0} 
	\int_{\{0,1\}^I} h\,  \Phi(g_n|_A) \, d\lambda_I=\left(\int_{\{0,1\}^I} h\, d\lambda_I\right) \left(\int_{\{0,1\}^I} \Phi(g_n|_A) \, d\lambda_I\right).
\end{equation}
Indeed, note that $h = \widetilde{h} \circ \rho_{IJ}$ for some $\widetilde{h}\in {\rm span}(\{\chi_{B_m}:m\in \N\})$ and then
Fubini's theorem yields
\begin{multline}\label{eqn:Fubini}
	\int_{\{0,1\}^I} h\,  
	\Phi(g_n|_A) \, d\lambda_I \stackrel{\eqref{eqn:rhoII}}{=} \int_{\{0,1\}^I} (\widetilde{h} \circ \rho_{IJ})\, (\widetilde{h}_n \circ \rho_{I I'}) \, d\lambda_I \\
	=	\left(\int_{\{0,1\}^J} \widetilde{h}\, d\lambda_J\right) \left(\int_{\{0,1\}^{I'}} \widetilde{h}_n \, d\lambda_{I'}\right).
\end{multline}
Since the function $\rho_{I I'}$ is $\Sigma_I$-to-$\Sigma_{I'}$ measurable and 
$\lambda_{I'}(A)=\lambda_I(\rho_{I I'}^{-1}(A))$ for every $A\in \Sigma_{I'}$ (see, e.g., \cite[254O]{freMT-2}), we have
\begin{equation}\label{eqn:cv-1}
	\int_{\{0,1\}^I} \Phi(g_n|_A) \, d\lambda_I\stackrel{\eqref{eqn:rhoII}}{=} \int_{\{0,1\}^I} \widetilde{h}_n \circ \rho_{I I'} \, d\lambda_I =\int_{\{0,1\}^{I'}} \widetilde{h}_n \, d\lambda_{I'}.
\end{equation}
Similarly, or by direct computation, we also have
\begin{equation}\label{eqn:cv-2}
	\int_{\{0,1\}^I} h\, d\lambda_I=\int_{\{0,1\}^J} \widetilde{h}\, d\lambda_J.
\end{equation}
By putting together \eqref{eqn:Fubini}, \eqref{eqn:cv-1} and \eqref{eqn:cv-2} we get~\eqref{eqn:independence0}, as claimed.

Finally, let $(A_m)_{m\in \N}$ be a sequence of pairwise disjoint elements of~$\Sigma_A$ such that 
$\Phi(\chi_{A_m}|_A)=\chi_{C_m}$ for all $m\in \N$.  Then $\mu(A_m)=\mu(A)\lambda_I(C_m)>0$ for all $m\in \N$.
Given any $f\in {\rm span}(\{\chi_{A_m}:m\in \N\})$, we have $\Phi(f|_A)\in {\rm span}(\{\chi_{C_m}:m\in \N\})$  and for each $n\in \N$ we have
\begin{multline*}
	\int_{A} f g_n \, d\mu_A 
	 \stackrel{\eqref{eqn:Xi0}}{=}
	\int_{\{0,1\}^I} \Phi(f|_A \, g_n|_A) \, d\lambda_I 
	 \stackrel{\eqref{eqn:Xi}}{=} 
	\int_{\{0,1\}^I} \Phi(f|_A)\Phi(g_n|_A) \, d\lambda_I \\
	\stackrel{\eqref{eqn:independence0}}{=}
	\left(\int_{\{0,1\}^I} \Phi(f|_A)\, d\lambda_I\right) \left(\int_{\{0,1\}^I} \Phi(g_n|_A) \, d\lambda_I\right) \stackrel{\eqref{eqn:Xi0}}{=}
	 \left(\int_{A} f\, d\mu_A\right) \left(\int_{A} g_n \, d\mu_A\right).
\end{multline*}
The proof is finished.
\end{proof}

We shall make use of the following well known characterization of Banach lattices with order continuous dual (see, e.g., \cite[Theorem~4.69]{ali-bur}):

\begin{theorem}\label{theorem:AB}
Let $E$ be a Banach lattice. The following statements are equivalent:
\begin{enumerate}
\item[(i)] $E^*$ is order continuous.
\item[(ii)] There is no disjoint sequence in~$E^+$ which is equivalent to the canonical basis of~$\ell_1$.
\item[(iii)] $E$ does not contain sublattices which are lattice isomorphic to~$\ell_1$.
\item[(iv)] $E^*$ does not contain subspaces isomorphic to~$\ell_\infty$.
\end{enumerate}
\end{theorem}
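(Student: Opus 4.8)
The plan is to deduce the four-way equivalence by first matching the two lattice-theoretic conditions (ii) and~(iii) against one another, then matching the two dual conditions (i) and~(iv), and finally bridging the two pairs through (i)$\Leftrightarrow$(ii), so that only a single disjointification argument carries the real weight. The equivalence (ii)$\Leftrightarrow$(iii) is purely order-theoretic. If $(x_n)$ is a disjoint sequence in~$E^+$ equivalent to the canonical basis of~$\ell_1$, then its closed linear span is a sublattice on which $\|\sum a_n x_n\|\le\sum|a_n|\|x_n\|$ by the triangle inequality and $\ge c\sum|a_n|$ by hypothesis, so this span is lattice isomorphic to~$\ell_1$ and (iii) holds. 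Conversely, a lattice isomorphism $\ell_1\to E$ carries the disjoint, positive unit vector basis to a disjoint positive sequence equivalent to the $\ell_1$-basis, giving~(ii).

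Next I would prove (i)$\Leftrightarrow$(iv). For (i)$\impli$(iv) I would show that an order continuous \emph{dual} Banach lattice is a KB-space: given a norm-bounded increasing sequence $0\le\phi_1\le\phi_2\le\cdots$ in~$E^*$, its $w^*$-limit $\phi$ exists (bounded sets being relatively $w^*$-compact) and equals $\sup_n\phi_n$, so $\phi-\phi_n\downarrow 0$ and order continuity forces $\|\phi-\phi_n\|\to0$; hence norm-bounded increasing sequences converge, so $E^*$ contains no copy of~$c_0$ (see, e.g., \cite{ali-bur}) and, a fortiori, none of~$\ell_\infty$. For the reverse I would argue contrapositively: if $E^*$ is not order continuous, then by the standard characterization (order continuity $\Leftrightarrow$ every order-bounded disjoint positive sequence is norm null, \cite{ali-bur}) there are a disjoint sequence $(\phi_n)$ in~$(E^*)^+$ and $\psi\in(E^*)^+$ with $\phi_n\le\psi$ and $\|\phi_n\|\ge\epsilon>0$. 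Since disjoint positive elements below~$\psi$ satisfy $\sum_{n\in F}\phi_n=\bigvee_{n\in F}\phi_n\le\psi$, Dedekind completeness of~$E^*$ lets me define a lattice homomorphism $\Psi:\ell_\infty\to E^*$ by $\Psi(b):=\sup_N\sum_{n\le N}b_n\phi_n$ for $b\ge0$, extended linearly. Choosing $x_n\in B_E^+$ with $\phi_n(x_n)\ge\epsilon/2$ gives $\|\Psi(b)\|\ge\Psi(|b|)(x_n)\ge(\epsilon/2)\|b\|_\infty$, so $\Psi$ is an isomorphic embedding and (iv) fails.

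Finally I would tie the two pairs together via (i)$\Leftrightarrow$(ii). The easy half is $\neg$(ii)$\impli\neg$(i): a positive disjoint $\ell_1$-sequence yields a lattice embedding $T:\ell_1\to E$, whose adjoint $T^*:E^*\to\ell_\infty$ is surjective and interval preserving; picking $u\in(E^*)^+$ with $T^*u\ge\mathbf 1$ gives $[0,\mathbf 1]\sub T^*([0,u])$, so were $E^*$ order continuous the interval $[0,u]$ would be weakly compact (\cite{ali-bur}) and so would its continuous image, forcing the non-weakly-compact $[0,\mathbf 1]\sub\ell_\infty$ to be weakly compact, a contradiction. The remaining implication $\neg$(i)$\impli\neg$(ii) is where I expect the main obstacle to lie. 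Starting from an order-bounded disjoint $(\phi_n)$ in~$(E^*)^+$ with $\|\phi_n\|\ge\epsilon$ and points $x_n\in B_E^+$ with $\phi_n(x_n)\ge\epsilon/2$, I must manufacture a \emph{disjoint} positive sequence in~$E$ equivalent to the $\ell_1$-basis, and the difficulty is that the off-diagonal values $\phi_m(x_n)$ for $m\ne n$ need not vanish. The plan is to exploit $\phi_n\wedge\phi_m=0$ through the Riesz--Kantorovich infimum formula in order to split each $x_n$ and replace it by a positive vector essentially carried by the support of~$\phi_n$, making $\sum_{m\ne n}\phi_m(x_n)$ summably small; testing $\sum_m a_m x_m$ against $\sum_n\mathrm{sgn}(a_n)\phi_n$ (whose norm is $\le\|\psi\|$ by disjointness) then produces the lower $\ell_1$-estimate, while a perturbation followed by lattice disjointification turns the resulting almost-disjoint $\ell_1$-sequence into a genuinely disjoint positive one, establishing $\neg$(ii). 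Combining (ii)$\Leftrightarrow$(iii), (i)$\Leftrightarrow$(iv) and (i)$\Leftrightarrow$(ii) then yields the full equivalence.
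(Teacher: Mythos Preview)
The paper does not prove this theorem at all: it is stated as a well-known characterization and simply cited to \cite[Theorem~4.69]{ali-bur}. There is therefore no ``paper proof'' to compare your attempt against; the authors treat it as background.

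That said, since you have sketched an argument, a brief comment on it is in order. Your equivalences (ii)$\Leftrightarrow$(iii) and (i)$\impli$(iv) are fine. In (iv)$\impli$(i) your map $\Psi:\ell_\infty\to E^*$ is the right idea, but you should check linearity and boundedness more carefully (the sup defining $\Psi(b)$ for $b\ge0$ is fine by Dedekind completeness and $\sum\phi_n\le\psi$, and the lower estimate works as you say). The implication $\neg$(ii)$\impli\neg$(i) via weak compactness of order intervals is standard. The step you flag as the main obstacle, $\neg$(i)$\impli\neg$(ii), is indeed the substantive one, and your sketch there is too vague to be a proof: ``split each $x_n$ \dots\ making $\sum_{m\ne n}\phi_m(x_n)$ summably small'' hides a genuine inductive construction (one typically uses the Riesz--Kantorovich formula to choose $0\le u_n\le x_n$ with $\phi_n(u_n)$ large and $\phi_m(u_n)$ small for $m<n$, passes to a subsequence to control the tail, and then disjointifies). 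This is exactly the content of the cited theorem in \cite{ali-bur}, so if you want a self-contained proof you should either carry out that induction explicitly or, as the paper does, simply invoke the reference.
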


\begin{proposition}\label{proposition:Bfs}
Let $E$ be a Banach function space over a finite measure space $(\Omega,\Sigma,\mu)$ such that both $E$ and $E^*$ are order continuous. If 
$E$ admits a countable positively norming set, then $E$ is separable. 
\end{proposition}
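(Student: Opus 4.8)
\textbf{The plan} is to argue by contradiction: assume $E$ is a non-separable Banach function space over $(\Omega,\Sigma,\mu)$ with both $E$ and $E^*$ order continuous and with a countable positively norming set $\{\varphi_{g_n}: n\in \N\}$ (using that order continuity of $E$ forces every $\varphi \in E^*$ to be of the form $\varphi_g$ for some $g\in E'$, as recalled in Subsection~\ref{subsection:Bfs}). By Lemma~\ref{lemma:separable}(ii), non-separability of $E$ together with order continuity of $E$ forces $L_1(\mu)$ to be non-separable. Now apply Lemma~\ref{lemma:Maharam} to the finite measure space $(\Omega,\Sigma,\mu)$ and the sequence $(g_n)_{n\in \N}\sub L_1(\mu)$: this produces a set $A\in \Sigma\setminus \mathcal{N}(\mu)$ and a sequence $(A_m)_{m\in \N}$ of pairwise disjoint sets in $\Sigma_A\setminus\mathcal{N}(\mu)$ on which $f$ and each $g_n$ ``decorrelate'' with respect to $\mu_A$, i.e. $\int_A fg_n\, d\mu_A=(\int_A f\, d\mu_A)(\int_A g_n\, d\mu_A)$ for all $f\in {\rm span}(\{\chi_{A_m}:m\in\N\})$.

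\textbf{The next step} is to extract from the $\chi_{A_m}$ a disjoint sequence in $E^+$ equivalent to the canonical basis of $\ell_1$, which contradicts Theorem~\ref{theorem:AB}(i)$\Leftrightarrow$(ii) since $E^*$ is order continuous. The natural candidates are suitably normalized characteristic functions $y_m := \chi_{A_m}/\|\chi_{A_m}\|_E$ (these lie in $E$ because $E$ is a Banach function space, and $\|\chi_{A_m}\|_E>0$ since $A_m\notin\mathcal{N}(\mu)$, noting $\|\chi_{A_m}\|_E \geq \|\chi_{A_m}\|_{L_1(\mu)}$ up to the norm of the inclusion $E\to L_1(\mu)$ — wait, that inequality goes the wrong way, so instead use that $\chi_{A_m}\ne 0$ in $E$ because it is nonzero $\mu$-a.e.). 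For a finitely supported $\sum_m a_m y_m$ with $a_m\geq 0$, positive normingness gives $\|\sum_m a_m y_m\|_E \geq c^{-1}\sup_n \varphi_{g_n}(\sum_m a_m y_m) = c^{-1}\sup_n \int_\Omega (\sum_m a_m y_m) g_n\, d\mu$. Rewriting the integral over $A$ in terms of $\mu_A$ (up to the constant $\mu(A)$) and using the decorrelation identity, each term $\int_A y_m g_n\, d\mu_A$ factors as $(\int_A y_m\, d\mu_A)(\int_A g_n\, d\mu_A)$; the idea is that by choosing the $g_n$'s (or rather, by exploiting that positive normingness must ``see'' the $\chi_{A_m}$'s) one can find, for each $m$, an index $n(m)$ making $\int_A g_{n(m)}\, d\mu_A$ close to $\|y_m\|_E$-related quantities, but because of the product structure the contributions from indices $m'\ne m$ get multiplied by the small number $\int_A y_{m'}\, d\mu_A$ and become negligible. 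The absolute-value issue (the definition of positively norming involves $\varphi(|x|)$, and $|\sum a_m y_m| = \sum |a_m| y_m$ since the $y_m$ have disjoint supports) means one should work with general real coefficients via $\|\sum a_m y_m\|_E \geq c^{-1}\sup_n \int (\sum |a_m| y_m) g_n\, d\mu$, reducing to the positive-coefficient case.

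\textbf{The main obstacle} I anticipate is making the ``$\ell_1$-lower estimate'' genuinely work: the decorrelation identity on its own only says the $g_n$ act on ${\rm span}(\{\chi_{A_m}\})$ like the single functional $f\mapsto \int_A f\, d\mu_A$ scaled by $\int_A g_n\, d\mu_A$, which is a \emph{rank-one} behaviour and by itself would only give an $\ell_1$-estimate for nonnegative coefficients against that one averaging functional — not enough. The resolution must come from also using order continuity of $E$ itself and perhaps passing to a further subsequence or a block-disjointification: order continuity of $E$ lets us control $\|\sum_{m\in F} y_m\|_E$ or guarantee that the $y_m$ do not all have norms shrinking too fast, and one may need to normalize differently (e.g. ensure $\int_A g_n\, d\mu_A$ is bounded below on a subsequence, or replace $\chi_{A_m}$ by $\chi_{D_m}$ for cleverly chosen $D_m\sub A_m$). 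A cleaner route may be: since $E$ restricted to $A$ (the space $E_A$ of functions supported on $A$) is again an order continuous Banach function space over $\mu_A$ with $E_A^*$ order continuous and still admitting the restricted positively norming sequence, and since $L_1(\mu_A)$ is non-separable, one reduces to $\Omega=A$; then the decorrelation says the restricted $g_n|_A$, when tested against the independent ``block'' algebra generated by the $A_m$, behave like constants, which should let one directly build the disjoint $\ell_1$-sequence $(y_m)$ and invoke Theorem~\ref{theorem:AB}, or alternatively show $E_A$ contains a lattice copy of $L_1(\lambda_I)$ for uncountable $I$, contradicting separability considerations combined with order continuity of the dual. I would aim to push the computation through at the level of the disjoint sequence $(y_m)$ and the explicit lower bound, treating the extraction of $n(m)$ and the subsequence passage as the technical heart.
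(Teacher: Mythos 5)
Your setup is exactly the paper's: argue by contradiction, use Lemma~\ref{lemma:separable}(ii) to get non-separability of $L_1(\mu)$, write the norming functionals as $\varphi_{g_n}$ with $g_n\in E'$ via order continuity of~$E$, and invoke Lemma~\ref{lemma:Maharam}. But the way you try to finish is backwards, and the ``main obstacle'' you identify is not resolved by any of the fixes you float (choosing indices $n(m)$, subsequences, block-disjointification, reduction to $E_A$, lattice copies of $L_1(\lambda_I)$). The decorrelation identity is not meant to produce the lower $\ell_1$-estimate; as you yourself observe, it cannot, since on ${\rm span}(\{\chi_{A_m}:m\in\N\})$ all the $g_n$ collapse to multiples of the single averaging functional $f\mapsto\int_A f\,d\mu_A$. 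Its actual role is to give the \emph{upper} bound: for $f=\sum_m a_m\chi_{A_m}$ one has $|f|=\sum_m|a_m|\chi_{A_m}$, still in the span, so the positively norming inequality combined with decorrelation yields $\|f\|_E\leq c\,\mu(A)\sup_n\bigl(\int_A|f|\,d\mu_A\bigr)\bigl(\int_A g_n\,d\mu_A\bigr)\leq\beta\|f\|_{L_1(\mu)}$, because $\int_A g_n\,d\mu_A\leq\mu(A)^{-1}\varphi_{g_n}(\chi_A)\leq\mu(A)^{-1}\|\chi_A\|_E$ is bounded uniformly in~$n$.

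The matching lower bound $\alpha\|f\|_{L_1(\mu)}\leq\|f\|_E$ is free: it is just the continuity of the inclusion $\iota:E\to L_1(\mu)$, which is part of the Banach function space structure (your parenthetical worry that ``the inequality goes the wrong way'' concerns exactly the direction you do not need). Hence the $E$-norm and the $L_1(\mu)$-norm are equivalent on ${\rm span}(\{\chi_{A_m}:m\in\N\})$, and since the $A_m$ are pairwise disjoint the $L_1(\mu)$-norm is additive there: $\|\sum_m a_m\chi_{A_m}\|_{L_1(\mu)}=\sum_m|a_m|\,\mu(A_m)$. Therefore the disjoint positive sequence $f_m:=\mu(A_m)^{-1}\chi_{A_m}$ (normalized in $L_1(\mu)$, not in $E$ as you propose) is equivalent to the canonical basis of~$\ell_1$, contradicting the order continuity of~$E^*$ via Theorem~\ref{theorem:AB}. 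So the gap in your proposal is one missing idea: the norming condition should be used to dominate $\|\cdot\|_E$ by $\|\cdot\|_{L_1(\mu)}$ on the independent block, with the $\ell_1$-lower estimate coming from $L_1(\mu)$ itself; without it the argument as you describe it does not close.
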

\begin{proof} 
Fix a sequence $(\phi_n)_{n\in \N}$ in $B_{E^*}\cap (E^{*})^+$ and a constant $c>0$ such that
\begin{equation}\label{eqn:positively-norming0}
	\|f\|_{E} \leq c \sup_{n\in \N}\, \phi_n(|f|)
	\quad
	\text{for every $f\in E$}. 
\end{equation}
Since $E$ is order continuous, we can identify $E^*$ with~$E'$, hence
for each $n\in \N$ we have $\phi_n=\varphi_{g_n}$ for some $g_n\in E'$
(see Subsection~\ref{subsection:Bfs}). Then \eqref{eqn:positively-norming0} reads as
\begin{equation}\label{eqn:positively-norming}
	\|f\|_{E} \leq c \sup_{n\in \N}\, \int_\Omega |f|g_n \, d\mu
	\quad
	\text{for every $f\in E$}. 
\end{equation}

Suppose, by contradiction, that $E$ is not separable. By Lemma~\ref{lemma:separable}(ii), the space $L_1(\mu)$ is not separable. 
Then we can apply Lemma~\ref{lemma:Maharam} to $(g_n)_{n\in \N}$ (as a sequence in~$L_1(\mu)$) to find $A\in \Sigma\setminus \mathcal{N}(\mu)$ and 
a sequence $(A_m)_{m\in \N}$ of pairwise disjoint sets of~$\Sigma_A\setminus \mathcal{N}(\mu)$ such that
\begin{equation}\label{eqn:independence}
	\int_{A} f g_n \, d\mu_A=\left(\int_{A} f\, d\mu_A\right) \left(\int_{A} g_n \, d\mu_A\right)
\end{equation}
for every $f\in {\rm span}(\{\chi_{A_m}:m\in \N\})$ and for every $n\in \N$.

It follows that for each $f\in {\rm span}(\{\chi_{A_m}:m\in \N\})$ we have
\begin{eqnarray*}
	(c\mu(A))^{-1} \|f\|_{E} & \stackrel{\eqref{eqn:positively-norming}}{\leq} & \sup_{n\in \N}\int_A |f| g_n \, d\mu_A 
	\\
	& \stackrel{\eqref{eqn:independence}}{=} & \sup_{n\in \N}\left(\int_{A} |f|\, d\mu_A\right) \left(\int_{A} g_n \, d\mu_A\right) \\ 
	& \leq & 
	\mu(A)^{-2} \, \|\chi_A\|_E \, \|f\|_{L_1(\mu)}   \\
	& \leq & \mu(A)^{-2} \|\chi_A\|_E \, \|\iota\| \, \|f\|_E,
\end{eqnarray*}
where $\iota:E \to L_1(\mu)$ is the identity operator. 

Therefore, there exist constants $\alpha,\beta>0$ such that 
\begin{equation}\label{eqn:equivalence}
	\alpha \|f\|_{L_1(\mu)} \leq \|f\|_{E} \leq \beta\|f\|_{L_1(\mu)} 
	\quad
	\text{for every $f\in {\rm span}(\{\chi_{A_m}:m\in \N\})$}. 
\end{equation}

Define $f_m:=\mu(A_m)^{-1}\chi_{A_m}\in E$ for all $m\in \N$. 
We can use~\eqref{eqn:equivalence} to prove that 
the disjoint sequence $(f_m)_{m\in \N}$ in~$E^+$ is equivalent to the canonical basis of~$\ell_1$.
This contradicts the order continuity of~$E^*$ (see Theorem~\ref{theorem:AB}).
\end{proof}

Recall that an {\em unconditional Schauder decomposition} of a Banach space~$X$ is a family~$\{X_i:i\in I\}$ 
of subspaces of~$X$ such that each $x\in X$ can be written in a unique way as $x=\sum_{i\in I} x_i$, where $x_i\in X_i$ for all $i\in I$,
the series being unconditionally convergent. In this case, for each $i\in I$ one has a projection $P_i$ from~$X$ onto~$X_i$
in such a way that $x=\sum_{i\in I} P_i(x)$ for all $x\in X$. 

\begin{lemma}\label{lemma:DF}
Let $X$ be a Banach space and let $\{X_i: i\in I\}$ be an unconditional Schauder decomposition of~$X$.
For each $i\in I$, let $P_i$ be the associated projection from~$X$ onto~$X_i$.
If $X^*$ contains no subspace isomorphic to~$\ell_\infty$,
then for every $x^* \in X^*$ the set $\{i\in I: x^*\circ P_i\neq 0\}$ is countable.
\end{lemma}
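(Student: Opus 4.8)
I will argue by contradiction: suppose there is $x^*\in X^*$ for which $J:=\{i\in I:\ x^*\circ P_i\neq 0\}$ is uncountable, and build a subspace of $X^*$ isomorphic to $\ell_\infty$, contradicting the hypothesis. The first step is to replace $J$ by a well-chosen countable subset. Since $x^*\circ P_i\neq 0$ is equivalent to $x^*$ not vanishing on~$X_i$, every $i\in J$ admits some $y\in B_{X_i}$ with $x^*(y)>0$, so $J=\bigcup_{k\ge 1}J_k$ with $J_k:=\{i\in J:\ x^*(y)>1/k\text{ for some }y\in B_{X_i}\}$; as $J$ is uncountable, some $J_k$ is uncountable, and I fix such a $k$, a sequence $(i_n)_{n\in\N}$ of pairwise distinct elements of $J_k$, and vectors $y_n\in B_{X_{i_n}}$ with $x^*(y_n)>1/k$ for all $n$.

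Next I will use the basic quantitative fact about unconditional Schauder decompositions: there is a constant $M>0$ such that $\bigl\|\sum_{i\in F}a_iP_i\bigr\|\le M\sup_{i\in F}|a_i|$ for every finite $F\sub I$ and all scalars $(a_i)_{i\in F}$. This is standard: for a fixed $x$ the operators $\sum_{i\in F}\epsilon_iP_i$ (with $F$ finite and $\epsilon\in\{-1,1\}^F$) are pointwise bounded on $X$ because $\sum_{i\in I}P_ix$ converges unconditionally, hence they are uniformly bounded by the Uniform Boundedness Principle, and the bound for arbitrary scalars follows since $[-1,1]^F$ is the convex hull of $\{-1,1\}^F$ and the norm is convex. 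Testing against~$x^*$ and taking suprema over sign patterns then gives, for every $x\in X$,
\[
\sum_{n\in\N}\bigl|x^*(P_{i_n}x)\bigr|
=\sup\Bigl\{\, x^*\Bigl(\sum_{n\in F}\epsilon_nP_{i_n}x\Bigr)\ :\ F\sub\N\text{ finite},\ \epsilon\in\{-1,1\}^F\Bigr\}
\le M\,\|x^*\|\,\|x\|.
\]

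With this estimate in hand, the last step is to define $\Phi\colon\ell_\infty\to X^*$ by $\Phi(a)(x):=\sum_{n\in\N}a_nx^*(P_{i_n}x)$. The displayed inequality shows that each $\Phi(a)$ is a well-defined bounded linear functional on~$X$ with $\|\Phi(a)\|\le M\|x^*\|\,\|a\|_\infty$, and $\Phi$ is visibly linear. Conversely, since the $i_n$ are distinct we have $P_{i_n}y_m=y_m$ when $n=m$ and $P_{i_n}y_m=0$ otherwise, so $\Phi(a)(y_m)=a_m\,x^*(y_m)$ and hence $\|\Phi(a)\|\ge|a_m|\,x^*(y_m)\ge|a_m|/k$ for every $m$, whence $\|\Phi(a)\|\ge\|a\|_\infty/k$. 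Thus $\Phi$ embeds $\ell_\infty$ isomorphically into $X^*$, the contradiction we sought, and $J$ must be countable. I expect the only point requiring care to be the clean formulation and use of the uniform constant $M$ for the decomposition (together with the remark that $\sum_{i\in I}P_ix$ has countable support, so the series defining $\Phi(a)$ is genuinely a countable sum); the derivation of the key bound $\sum_n|x^*(P_{i_n}x)|\le M\|x^*\|\,\|x\|$ and the embedding argument are then routine.
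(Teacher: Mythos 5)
Your proof is correct, but it takes a genuinely different route from the paper's. The paper defines a finitely additive set function $\nu(J):=x^*\circ Q_J$ on $\mathcal{P}(I)$ (where $Q_J=\sum_{i\in J}P_i$), observes that $x\circ\nu\in{\rm ca}(\mathcal{P}(I))$ for every $x\in X\sub X^{**}$ by absolute convergence of $\sum_i x^*(P_i(x))$, and then invokes the Diestel--Faires theorem (with the total set $X\sub X^{**}$) to conclude $\nu\in{\rm ca}(\mathcal{P}(I),X^*)$; finiteness of each level set $\{i:\|x^*\circ P_i\|\ge\epsilon\}$ then follows from unconditional convergence of $\sum_n\nu(\{i_n\})$. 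You instead build the forbidden copy of $\ell_\infty$ inside $X^*$ by hand: the pigeonhole on the sets $J_k$, the uniform unconditionality constant $M$ from the Uniform Boundedness Principle, the resulting estimate $\sum_n|x^*(P_{i_n}x)|\le M\|x^*\|\|x\|$, and the embedding $\Phi(a)(x)=\sum_n a_n x^*(P_{i_n}x)$ with lower bound $\|\Phi(a)\|\ge\|a\|_\infty/k$ coming from $P_{i_n}y_m=\delta_{nm}y_m$ are all sound. In effect you reprove, in this special situation, the construction underlying the Diestel--Faires theorem, which makes your argument more self-contained; the paper's version is shorter only because Diestel--Faires is already quoted as a black box. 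Note also that your use of uncountability is exactly where it must be: for a merely countably infinite exceptional set all the $J_k$ could be finite (e.g.\ $X=\ell_2$ with its coordinate decomposition), so the pigeonhole step is the point at which the argument correctly refuses to prove too much.
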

\begin{proof}
For each $J \sub I$, let $Q_J$ be the projection from~$X$ onto $\overline{{\rm span}}(\bigcup_{i\in J}X_i)$ defined by $Q_J(x):=\sum_{i\in J} P_i(x)$ for all $x\in X$.

Fix $x^*\in X^*$ and define $\nu: \mathcal{P}(I) \to X^*$ by
$$
	\nu(J):=x^* \circ Q_J \quad
	\text{for all $J \sub I$}.
$$
We identify $X$ as a subspace of~$X^{**}$ in the canonical way.
Note that $x \circ \nu \in {\rm ca}(\mathcal{P}(I))$ for every $x\in X$, because
the series of real numbers $\sum_{i\in I}x^*(P_i(x))$ is absolutely convergent and
$$
	(x \circ \nu)(J)=x^*(Q_J(x))=\sum_{i\in J}x^*(P_i(x))
	\quad \text{for all $J \sub I$}.
$$
Since $X^*$ contains no subspace isomorphic to~$\ell_\infty$ and $X$ is a total subset of~$X^{**}$, we can apply the Diestel-Faires Theorem~\ref{theorem:DF}(ii)
to conclude that $\nu \in {\rm ca}(\mathcal{P}(I),X^*)$.

We claim that for every $\epsilon>0$ the set $I_\epsilon:=\{i\in I: \|x^* \circ P_i\|\geq \epsilon\}$ is finite. Indeed, 
if not, then there is a sequence $(i_n)_{n\in \N}$ of distinct elements of~$I_\epsilon$. However, the countable additivity of~$\nu$ 
implies that the series $\sum_{n\in \N}\nu(\{i_n\})=\sum_{n\in \N}x^* \circ P_{i_n}$ is unconditionally convergent in~$X^*$, which is impossible 
because $\|x^* \circ P_{i_n}\|\geq \epsilon$ for all $n\in \N$. Therefore, the set $\{i\in I: x^*\circ P_i\neq 0\}=\bigcup_{n\in \N}I_{1/n}$ is countable.
\end{proof}

\begin{lemma}\label{lemma:spf}
Let $E$ be a Banach lattice admitting a countable positively norming set. Then $E$ admits a strictly positive functional, i.e., 
there is $\varphi \in (E^*)^+$ such that $\varphi(x)>0$ whenever $x\in E^+\setminus \{0\}$.
\end{lemma}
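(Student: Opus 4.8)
The plan is to produce the strictly positive functional directly as a weighted sum of the members of the given countable positively norming set, with no need for the order-continuity hypotheses used elsewhere. Write the countable positively norming set as $\{\varphi_n:n\in\N\}\subseteq B_{E^*}\cap(E^*)^+$ (repeating elements if the set is finite), and fix a constant $c>0$ such that $\|x\|_E\leq c\sup_{n\in\N}\varphi_n(|x|)$ for every $x\in E$.

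First I would set $\varphi:=\sum_{n\in\N}2^{-n}\varphi_n$. Since each $\|\varphi_n\|\leq 1$, the partial sums form a Cauchy sequence in the Banach space~$E^*$, so $\varphi\in E^*$ is well defined; and since each partial sum lies in $(E^*)^+$ and the positive cone of~$E^*$ is norm-closed, we get $\varphi\in(E^*)^+$. Next I would verify strict positivity: take $x\in E^+\setminus\{0\}$ and suppose, aiming at a contradiction, that $\varphi(x)=0$. Because $x\geq 0$ and $\varphi_n\geq 0$, every summand $2^{-n}\varphi_n(x)$ is nonnegative, so $\varphi(x)=0$ forces $\varphi_n(x)=0$ for all~$n$. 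As $x=|x|$, the positively norming inequality then gives $\|x\|_E\leq c\sup_{n\in\N}\varphi_n(|x|)=0$, whence $x=0$, contradicting the choice of~$x$. Therefore $\varphi(x)>0$ for every $x\in E^+\setminus\{0\}$, i.e.\ $\varphi$ is strictly positive.

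There is essentially no obstacle in this argument; the two points that require a word of justification are both standard, namely that the series converges in norm (which uses the uniform bound $\|\varphi_n\|\leq 1$ coming from $B\subseteq B_{E^*}$) and that the limit is again positive (which uses the norm-closedness of $(E^*)^+$). If anything, the only thing to be slightly careful about is to keep $|x|$ rather than $x$ inside the positively norming inequality until the step where we have already reduced to $x\in E^+$.
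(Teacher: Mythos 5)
Your proof is correct and follows exactly the same route as the paper, which also takes $\varphi=\sum_{n\in\N}2^{-n}\varphi_n$ for a countable positively norming sequence $(\varphi_n)_{n\in\N}$ and notes that it works; you have merely written out the routine verifications (norm convergence, closedness of the positive cone, and the reduction via $x=|x|$) that the paper leaves to the reader.
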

\begin{proof}
Take a sequence $(\varphi_n)_{n\in \N}$ in $B_{E^*}\cap (E^{*})^+$ and a constant $c>0$ such that
$\|x\|_{E}\leq c\sup_{n\in \N}\varphi_n(|x|)$ for every $x\in E$. Now, it is clear that the functional
$\varphi:=\sum_{n\in \N}2^{-n}\varphi_n$ satisfies the required property.
\end{proof}

We have gathered all the tools needed to prove the main result of this section:

\begin{proof}[Proof of Theorem~\ref{theorem:general}]
Since $E$ is order continuous, it admits an unconditional Schauder decomposition $\{E_i:i\in I\}$ consisting 
of pairwise disjoint bands, each having a weak order unit (see, e.g., \cite[Proposition 1.a.9]{lin-tza-2}).
For each $i\in I$, let $P_i$ be the associated projection from~$E$ onto~$E_i$.

Fix $i\in I$. Since $E_i$ is order continuous and has a weak order unit, it is 
lattice-isometric to a Banach function space over a finite measure space (see, e.g., \cite[Proposition 1.b.14]{lin-tza-2}).
Since $E^*$ is order continuous, the same holds for~$E_i^*$ (bear in mind the equivalence (i)$\Leftrightarrow$(iii) in Theorem~\ref{theorem:AB}).
Moreover, $E_i$ admits a countable positively norming set (consider the restriction to~$E_i$ of a countable positively norming set for~$E$).
Then, $E_i$ is separable by Proposition~\ref{proposition:Bfs}.

Fix $\varphi \in (E^*)^+$ such that $\varphi(x)>0$ whenever $x\in E^+\setminus \{0\}$ (see Lemma~\ref{lemma:spf}).
For each $i\in I$ we have $E_i\neq \{0\}$ and so $\varphi\circ P_i\neq 0$.
Since $E^*$ is order continuous, it contains no subspace isomorphic to~$\ell_\infty$
(see Theorem~\ref{theorem:AB}) and then Lemma~\ref{lemma:DF} implies
that $I$ is countable. From the separability of each~$E_i$ it follows that $E$ is separable.
\end{proof}

\begin{example}\label{example:no-oc-dual}
\rm The conclusion of Theorem~\ref{theorem:general} can fail if the order continuity of~$E$ is dropped. For instance,
the non-separable Banach lattice $\ell_\infty$ admits a countable positively norming set (the coordinate functionals form a norming set)
and $\ell_\infty^*$ is an AL-space, hence it is order continuous (see, e.g., \cite[p.~194 and Theorem~4.23]{ali-bur}).
\end{example}

Any reflexive Banach lattice is order continuous (see, e.g., \cite[Theorem~4.9]{ali-bur}), so we get:

\begin{corollary}\label{corollary:reflexive}
Let $E$ be a reflexive Banach lattice. If $E$ admits a countable positively norming set, then $E$ is separable. 
\end{corollary}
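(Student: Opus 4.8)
The plan is to obtain this as an immediate corollary of Theorem~\ref{theorem:general}, so the only work is to check that the hypotheses of that theorem are satisfied. The hypothesis that $E$ admits a countable positively norming set is assumed, so it remains to verify that both $E$ and $E^*$ are order continuous.

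For $E$ itself this is precisely the quoted fact that every reflexive Banach lattice is order continuous (see \cite[Theorem~4.9]{ali-bur}). For $E^*$, the observation is that the dual of a reflexive Banach space is again reflexive; hence $E^*$ is a reflexive Banach lattice, and applying the same fact once more gives that $E^*$ is order continuous. (Alternatively, one could invoke Theorem~\ref{theorem:AB}: a reflexive $E$ contains no sublattice lattice isomorphic to~$\ell_1$, so $E^*$ is order continuous; but the passage through reflexivity of~$E^*$ is shorter.)

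With both $E$ and $E^*$ order continuous and $E$ admitting a countable positively norming set, Theorem~\ref{theorem:general} applies directly and yields that $E$ is separable. There is essentially no obstacle: the entire content is already packaged in Theorem~\ref{theorem:general}, and the only extra ingredient is the routine fact that reflexivity of~$E$ forces order continuity of~$E^*$.
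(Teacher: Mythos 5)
Your proof is correct and matches the paper's argument: the paper likewise deduces the corollary from Theorem~\ref{theorem:general} by noting that any reflexive Banach lattice is order continuous, which applies to both $E$ and its (reflexive) dual $E^*$.
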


The previous corollary and Lemma~\ref{lemma:SeparableRangeImpliesCountablePN} yield:

\begin{corollary}\label{corollary:reflexiveL1}
Let $\Sigma$ be a $\sigma$-algebra, let $X$ be a separable Banach space and let $\nu \in {\rm ca}(\Sigma,X)$. 
If $L_1(\nu)$ is reflexive, then $L_1(\nu)$ is separable. 
\end{corollary}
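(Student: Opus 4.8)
The plan is to obtain this corollary by simply assembling two facts that are already in place: Lemma~\ref{lemma:SeparableRangeImpliesCountablePN}, which manufactures a countable positively norming set in $L_1(\nu)$ out of the separability of~$X$, and Corollary~\ref{corollary:reflexive}, which states that a reflexive Banach lattice admitting a countable positively norming set is separable. So essentially nothing new has to be proved; the work lies in the earlier results.

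Concretely, I would first fix a Rybakov control measure $\mu$ of $\nu$, so that $L_1(\nu)$ is a Banach function space over $(\Omega,\Sigma,\mu)$ and, in particular, a Banach lattice. Next I would invoke Lemma~\ref{lemma:SeparableRangeImpliesCountablePN} with the separable space~$X$ and $\nu\in{\rm ca}(\Sigma,X)$ to produce a countable subset of $B_{L_1(\nu)^*}\cap (L_1(\nu)^*)^+$ that is positively norming for~$L_1(\nu)$. Finally, since $L_1(\nu)$ is reflexive by hypothesis, Corollary~\ref{corollary:reflexive} (which rests on Theorem~\ref{theorem:general}, using that a reflexive Banach lattice has both itself and its dual order continuous) applies and yields that $L_1(\nu)$ is separable.

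There is no genuine obstacle here: all the substantive content — the Maharam-theorem argument behind Lemma~\ref{lemma:Maharam}, the reduction via unconditional Schauder decompositions in the proof of Theorem~\ref{theorem:general}, and the $w^*$-density construction in Lemma~\ref{lemma:SeparableRangeImpliesCountablePN} — has already been carried out. The only point worth a second look is that the hypotheses of Corollary~\ref{corollary:reflexive} are genuinely met, i.e.\ that reflexivity of $L_1(\nu)$ suffices to guarantee order continuity of both $L_1(\nu)$ and $L_1(\nu)^*$; this holds because a reflexive Banach lattice and its dual are both reflexive and hence both order continuous, so in particular the equivalent conditions of Theorem~\ref{theorem:AB} are satisfied.
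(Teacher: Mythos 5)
Your proposal is correct and is exactly the paper's argument: the corollary is obtained by combining Lemma~\ref{lemma:SeparableRangeImpliesCountablePN} with Corollary~\ref{corollary:reflexive}, the latter resting on Theorem~\ref{theorem:general} via the fact that a reflexive Banach lattice and its (reflexive) dual are both order continuous. No further commentary is needed.
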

 
\begin{example}\label{example:Lp}
\rm Let $(\Omega,\Sigma,\mu)$ be a finite measure space such that $L_1(\mu)$ is not separable and let $1<p<\infty$. Then 
\begin{enumerate}
\item[(i)] $L_p(\mu)$ is a non-separable reflexive Banach function space over $(\Omega,\Sigma,\mu)$.
\item[(ii)] $L_p(\mu)=L_1(\nu)$, where
$\nu\in {\rm ca}(\Sigma,L_p(\mu))$ is defined by $\nu(A):=\chi_A$ for all $A\in \Sigma$. 
\item[(iii)] If $X$ is a {\em separable} Banach space, $\Sigma_0$ is a $\sigma$-algebra and $\nu_0 \in {\rm ca}(\Sigma_0,X)$, then
$L_p(\mu)$ is not isomorphic to $L_1(\nu_0)$ even as Banach spaces. To see this, apply Corollary~\ref{corollary:reflexiveL1} and part~(i) above.
\end{enumerate}
\end{example}

\subsection*{Acknowledgements}
We thank A. Avil\'{e}s for valuable comments on Theorem~\ref{theorem:ellinfty-uncountable-2}.
The research of J. Rodr\'{i}guez was partially supported by grants MTM2017-86182-P 
(funded by MCIN/AEI/10.13039/501100011033 and ``ERDF A way of making Europe'') and 
21955/PI/22 (funded by {\em Fundaci\'on S\'eneca - ACyT Regi\'{o}n de Murcia}). 
The research of E.A. S\'{a}nchez-P\'{e}rez was partially supported 
by grant PID2020-112759GB-I00 funded by MCIN/AEI/10.13039/501100011033.

%\bibliography{C:/Mat/Bibliografia/inves,C:/Mat/Bibliografia/invesJose}

\def\cprime{$'$}\def\cdprime{$''$}
  \def\polhk#1{\setbox0=\hbox{#1}{\ooalign{\hidewidth
  \lower1.5ex\hbox{`}\hidewidth\crcr\unhbox0}}} \def\cprime{$'$}
\providecommand{\bysame}{\leavevmode\hbox to3em{\hrulefill}\thinspace}
\providecommand{\MR}{\relax\ifhmode\unskip\space\fi MR }
% \MRhref is called by the amsart/book/proc definition of \MR.
\providecommand{\MRhref}[2]{%
  \href{http://www.ams.org/mathscinet-getitem?mr=#1}{#2}
}
\providecommand{\href}[2]{#2}

\bibliographystyle{amsplain}

\end{document}